\pdfoutput=1
\documentclass[12pt]{amsart}

\usepackage{amsmath,amssymb,graphicx,subcaption} 
\usepackage[margin=1in]{geometry} 
\usepackage{enumerate}
\usepackage[pdftex]{hyperref}
\usepackage{tikz}
\usetikzlibrary{calc,petri,positioning,arrows.meta,cd,arrows,positioning,shapes.symbols}
\usepackage{float}
\usepackage[labelfont=bf]{caption}

\usepackage{multirow}
 \usepackage{mathrsfs}
 \usepackage{romannum}

\theoremstyle{plain}
\newtheorem{lem}[subsubsection]{Lemma}
\newtheorem{cor}[subsubsection]{Corollary}

\newtheorem{pro}[subsubsection]{Proposition}
\newtheorem{defn}[subsubsection]{Definition}

\newtheorem{thm}[subsubsection]{Theorem}
\theoremstyle{remark}
\newtheorem{rem}[subsubsection]{Remark}
\newtheorem{exa}[subsubsection]{Example}

\newcommand{\LM}{\text{LM}}
\newcommand{\NF}{\text{NF}}
\usepackage{algorithm}
\usepackage[noend]{algorithmic}

\begin{document}

\title[Massively Parallel Modular Methods in Algebra and Geometry]{Massively Parallel Modular Methods in Commutative Algebra and Algebraic Geometry}

\thanks{We would like to thank Fraunhofer ITWM for providing access to their computer infrastructure. HPR was supported by the DAAD In-Region program. Gef\"ordert durch die Deutsche Forschungsgemeinschaft (DFG) - Projektnummer 286237555 - TRR 195 [Funded by the Deutsche Forschungsgemeinschaft (DFG, German Research Foundation) - Project- ID 286237555 - TRR 195]. The work of JB was supported by Project B5 of SFB-TRR 195. MM acknowledges support of SFB-TRR 195. }

\keywords{Computer algebra, modular methods, Singular, distributed computing, GPI-Space, Petri nets,
computational algebraic geometry, birational geometry.} 

\author{Dirk Basson}
\address{Dirk Basson, Department of Mathematical Sciences,
Mathematics Division,
Stellenbosh University,
Private Bag X1, 7602, Matieland, South Africa}
\email{djbasson@sun.ac.za}

\author{Janko~B\"ohm}
\address{Janko~B\"ohm, Department of Mathematics, University of Kaiserslautern,  Erwin-Schr\"odinger-Str., 67663 Kaiserslautern, Germany}
\email{boehm@mathematik.uni-kl.de}

\author[Magdaleen Marais]{Magdaleen S. Marais}
\address{Magdaleen S. Marais, Department of Mathematical Sciences,
Mathematics Division,
Stellenbosh University,
Private Bag X1, 7602, Matieland, South Africa}
\email{msmarais@sun.ac.za}

\author{Mirko Rahn}
\address{Mirko Rahn, Competence Center High Performance Computing,
Fraunhofer-Institut für Techno- und Wirtschaftsmathematik ITWM,
Fraunhofer-Platz 1,
67663 Kaiserslautern, Germany}
\email{mirko.rahn@itwm.fraunhofer.de}

\author[HP Rakotoarisoa]{Hobihasina P. Rakotoarisoa}
\address{Hobihasina P. Rakotoarisoa,  Department of Mathematical Sciences,
Mathematics Division,
Stellenbosh University,
Private Bag X1, 7602, Matieland, South Africa}
\email{24283967@sun.ac.za}

\maketitle

 \pagenumbering{arabic}

\begin{abstract}

Computations over the rational numbers frequently encounter the issue of intermediate coefficient growth. Modular methods provide a solution to this problem by applying the algorithm under consideration modulo a number of primes and then lifting the modular results to the rationals. 
We present a novel, massively parallel framework for modular computations with polynomial data. Our approach introduces a data structure which is used in the modular computations and covers a broad spectrum of applications in commutative algebra and algebraic geometry. We demonstrate the framework's adaptability and effectiveness in applications such as Gröbner basis computations in characteristic zero and algorithmic methods from birational geometry. To support this application, we develop algorithms, and modular variants thereof, to compute images and domains of rational maps, as well as determining invertibility and computing inverses.

Our implementation is based on the \textsc{Singular}/\textsc{GPI-Space} framework, which uses the computer algebra system \textsc{Singular} as computational backend and user interface, while coordination and communication of parallel computations is handled by the workflow management system \textsc{GPI-Space}, which relies on Petri nets as its mathematical modeling language. Convenient installation is realized through the package manager \textsc{Spack}. Utilizing Petri nets for the coordination of the individual computational activities in the modular algorithm, our approach provides automated parallelization and balancing of the load between computation, lifting, stabilization testing, and potential verification. Via stabilization testing, our approach automatically finds with high probablity a minimal set of primes required for the successfull reconstruction. The framework employs error tolerant rational reconstruction that effectively manages the problem of bad primes, ensuring correctness and termination as long as for a fixed computation there exist only finitely many bad primes.

We present timings which illustrate that our framework has the potential for a game changing improvement of performance over previous modular and non-modular methods, considering characteristic zero Gröbner basis computations, and computations for rational maps. In particular, we illustrate that the approach scales very well with the number of processor cores used for the computation.\medskip

\noindent \textbf{Math. Subj. Class. (2020).} Primary 68W10; Secondary 68W30, 68Q85, 14Q99.

\end{abstract}

\

\section{Introduction}

In computer algebra, intermediate coefficient growth in algorithms over characteristic $0$ is a significant problem both with regard to run-time and memory usage. One way to address this issue is to resort to constant-size coefficient arithmetic. One way to do this, is to apply modular methods. For the computation of Gr\"{o}bner bases of ideals over the rational numbers, this approach was initially established in \cite{arnold2003modular}. Modular methods typically involve the following sub-steps: reducing the input modulo several primes $p$, that is mapping the input over $\mathbb Q$ to input over $\mathbb Z/p$, if well-defined; applying the algorithm under consideration with a deterministic answer over $\mathbb{Z}/p\mathbb{Z}$; lifting of the modular results via the Chinese Remainder Theorem to  $\mathbb{Z}/N\mathbb{Z}$ with $N$ the product of the primes $p$; applying rational reconstruction to lift the result to a result over $\mathbb{Q}$; and potentialy verifying the result to obtain a mathematically proven answer. In the case of Gröbner bases this can be done using Buchberger's criterion to verify that the result is a Gröbner basis, together with reducing the original generators to ensure that the result is a Gröbner basis of the given ideal. The verification step, which may in some cases be very expensive, is necessary, since $N$ may be too small or some of the used primes $p$ may be \emph{bad primes}, that is primes modulo which the result over the rationals does not reduce to the result computed over $\mathbb Z/p$.  Arnold noticed in the case of computing reduced Gröbner bases for ideals that a prime $p$ is good, that is not bad, if  the lead monomials over $\mathbb Z/p$ agree with the result over $\mathbb Q$. However, the result over $\mathbb Q$ is not known a priori.  Arnold's observation was used in \cite{idrees2011parallelization} to turn modular methods for Gröbner bases into an actual algorithm and implementation by introducing a majority vote over the sets of lead monomials for the different primes, as well as a  stabilization test under adding a result modulo a prime not considered yet. This results in a randomized algorithm, which delivers a correct result with high probability. In general, if we lift an ideal or module (produced by a deterministic algorithm) where a generating set is not known a priori (which is the usual setting if an algorithm is determining a new mathematical object), there can exist bad primes which cannot be detected directly, or eventually be eliminated by a majority vote. This, in particular, this may lead to the computation not terminating. The error tolerant rational reconstruction described in \cite{Boehm2012} solves the problem and ensures termination if for a fixed input there exist only finitely many bad primes for the computation, and thus the good primes eventually dominate the bad primes.

In this paper, we develop a generic massively parallel framework for modular computations with polynomial data. We introduce a data structure for the input and output data of the framework, which covers all relevant use cases in commutative algebra. 

One use case for which the framework was build, in particular, is algorithms from birational geometry. The framework might, for example, be of use for an algorithmic approach for the minimal model program, see \cite{lazic2023programming}. We specifically develop algorithms for handling  the computation of images of rational maps of algebraic varieties, determining invertibility of rational maps, computation of inverses, and domains of rational maps. 

We use the language of Petri nets to develop our parallel algorithms following the idea of separation of coordination and computation by Gelernter \cite{gelernter1992coordination}. Our implementation is based on the \textsc{Singular}/\textsc{GPI-Space} framework \cite{boehm2018massively,singgspc}, which uses the computer algebra system \textsc{Singular} for polynomial computations \cite{Singular} as computational backend and user interface, while coordination and communication of parallel computations is handled by the workflow management system \textsc{GPI-Space} \cite{gpispace}, which relies on Petri nets as its mathematical modeling language. Utilizing Petri nets for the coordination of the individual computational activities in the modular algorithm, our approach provides automated parallelization and balancing of the load between the different components of the generic modular approach: reduction of the input modulo primes, computation of results in finite characteristic, lifting using the Chinese remainder theorem and error tolerant rational reconstruction, stabilization testing to provide an indication that the set of good primes is large enough (resulting in a randomized algorithm which, in practice, gives reasonably trustworthy results), as well as a potential verification (to obtain mathematically proven results). The Petri nets developed as part of our framework address frequently occuring algorithmic patterns which are of relevance far beyond modular methods.

We present timings which illustrate that our framework shows significant performance improvements over previous methods in characteristic zero Gröbner basis computations and computations for rational maps.
We observe that the modular framework works equally well for multiple cases with varying properties, for example characteristic zero Gr\"{o}bner basis computations where the positive characteristic Gr\"{o}bner basis computations are the bottle neck, or where the lifting or the stabilization test is the bottle neck. Similar observations apply for images of rational maps. 

In Section \ref{sec:1}, we first recall the key idea of rational reconstruction via the Farey map. We also outline error tolerant rational reconstruction. We develop a  generic modular framework for deterministic algorithms in commutative algebra. In particular, a new data structure is introduced as well as all the corresponding tools for modular algorithms. We demonstrate the use of the framework by applying it to the case of Gr\"{o}bner bases of ideals and subquotients of modules over a polynomial ring. In Section \ref{sec:5} this framework will be the starting point of our parallel one.

In Section \ref{sec3}, we outline algorithms to compute for a rational map of projective varieties graph, image, and domain, as well as a set of representatives covering the domain. We develop an effective criterion for birationality,
which is based on  \cite{Simis2004}, and give an algorithm to compute the inverse.

In Section \ref{sec4}, we explain the benefit of the separation of the coordination layer and computation layer in the implementation of parallel algorithms. We recall the notion of Petri nets which can be considered as a coordination model for parallel tasks. Finally, we give a short overview how the workflow management system  \textsc{GPI-Space} relies on Petri nets to describe asynchronous activities.

In Section \ref{sec:5}, we provide a model for modular algorithms in terms of Petri nets. This section contains exhaustive details of all the subnets and transitions that appear in the model. The exactness and termination of the framework is explained in the last section. Our approach is implemented in the open source software framework \textsc{gspc-modular}. For the code, documentation and installation instructions, see  \cite{modular}. 

In Section \ref{sec:6}, we time the use of generic massively parallel modular algorithm for the computation of Gr\"{o}bner basis and the computation of the image of rational maps.

\section{Modular methods in commutative algebra}\label{sec:1}

\subsection{Rational reconstruction}\label{rational reconstruction}
In problems over $\mathbb Q$, with big intermediate results, rational reconstruction in conjunction with the Chinese Remainder Theorem is often used. To improve the performance further, computations can be done in parallel. For this we need an algorithm solving the problem under consideration that is also applicable over finite fields, and of which the result is deterministic. The approach is as follows: Map the input data to $\mathbb Z/N$, where $N=p_1\cdots p_n$ is a product of primes. Apply the Algorithm over $\mathbb Z/p_i$ using parallel methods. Lift these results, using the Chinese remainder Theorem, to a result over $\mathbb Z/N$. Then reconstruct the result via rational reconstruction in $\mathbb Q$.

In order to map $x =\frac{a}{b}\in\mathbb Q$ to $\mathbb Z/N$, given that  $\text{gcd}(b,N) = 1$, we define the residue class for $x$ modulo $N$ as $$x_N = \overline{a} \cdot {\overline{b}}^{-1} \in \mathbb{Z}/N \mathbb{Z}.$$ 

For the rest of the section we will discuss how to reconstruct results in $\mathbb Q$ from results in $\mathbb Z/N$. One way to recover a rational number from its class modulo $N$ is to use  the  Farey map:
$$
\begin{array}{ccc}
\varphi_N :  \left\{\frac{a}{b} \in \mathbb{Q} \, \Big| \, \begin{array}{c}

\text{gcd}(a,b)=1\\
\text{gcd}(b,N)=1
\end{array}\,
|a|,|b| \leq \sqrt{(N-1)/2} \right\} &\longrightarrow &\mathbb{Z}/N \mathbb{Z}  \\
\frac{a}{b}&\longmapsto& \overline{a} \cdot {\overline{b}}^{-1}
\end{array}
$$
which is injective.

Algoritm 2 in \cite{Boehm2012}  gives an efficient way to reconstruct preimages of the Farey map (see also \cite{COLLINS1995287} and \cite{Kornerup1983}). Note that for our approach the restricted domain of the Farey map is no problem, since we can choose $N$ as large as we want. This approach has the following flaw. There may exist primes $p$ for which a result of a given algorithm over $\mathbb Q$ mapped via the Farey map to $\mathbb Z/p$, does not map to the result computed over $\mathbb Z/p$. We call such primes {\bf bad}. In such a case our approach may give a wrong result. There are typically very few bad primes, they can mostly be detected a priori and, if the primes are chosen randomly, hardly create any practical difficulties. Still there exist no theoretic argument to eliminate all possible bad primes for an algorithm and the result obtained by using the above method of rational reconstruction may not be correct. See Example \ref{exm:recostruction} for an illustration that a prime can be bad without any apparent reason, like dividing numerators or denominators of the relevant rational numbers.

This problem was solved by an error tolerant method of reconstruction presented in \cite{Boehm2012}. 
For this method, consider the subset $C_N \subset \mathbb{Z}/N \mathbb{Z}$ defined by $\overline{r} \in C_N$, if there exists $u, v , q \in \mathbb{Z}$, such that
\begin{itemize}
	\item[(i)] $u  \geq 0, v \neq 0$ and $\text{gcd}(u,v) = 1$,
	\item[(ii)] $q \geq 1$ and $q$ divides N,
	\item[(iii)] $u^2 + v^2 < \frac{N}{q^2}$ and $u \equiv vr \pmod{\frac{N}{q}}$.
\end{itemize}

In Lemma 4.2 in \cite{Boehm2012} it is proven that the rational number $\frac{u}{v}=\frac{qu}{qv}$ is uniquely determined by (iii). That implies that all vectors with $u^2+v^2<N$ in the following lattice represent the same rational number.
Therefore the map 
$$\psi_N : C_N \to \mathbb{Q},\ \psi(\overline{r}) =
\frac{u}{v}$$ is well-defined. Consider the lattice 
$$\Lambda =  \Lambda_{N,r} = \langle (N,0), (r,1)\rangle \subset 
\mathbb{Z}^2.$$

\begin{lem}(Lemma 4.2, \cite{Boehm2012})
	All $(x,y) \in \Lambda $ with $x^2 +y^2 < N $ are collinear. That is $\frac{x}{y}$ 
	defines a unique rational number.\label{lemma:1}
\end{lem}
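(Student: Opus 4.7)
The plan is to exploit two basic facts about the lattice $\Lambda = \langle (N,0), (r,1)\rangle$: it has covolume $N$ in $\mathbb{Z}^2$, and vectors in it that are short (norm squared $< N$) cannot span a nondegenerate sublattice of full rank.

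First I would compute the covolume. The generator matrix $\bigl(\begin{smallmatrix} N & 0 \\ r & 1 \end{smallmatrix}\bigr)$ has determinant $N$, so for \emph{any} two lattice vectors $v_1 = (x_1,y_1)$ and $v_2 = (x_2,y_2)$ in $\Lambda$ the determinant $x_1 y_2 - x_2 y_1$ is an integer multiple of $N$. This is the purely algebraic ingredient.

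Second I would bound this same determinant analytically using Lagrange's identity
\[
(x_1 y_2 - x_2 y_1)^2 + (x_1 x_2 + y_1 y_2)^2 = (x_1^2 + y_1^2)(x_2^2 + y_2^2),
\]
so that whenever both vectors satisfy the hypothesis $x_i^2 + y_i^2 < N$, we get $|x_1 y_2 - x_2 y_1| < N$. Combining the two observations, the determinant is an integer multiple of $N$ of absolute value strictly less than $N$, hence zero. So $(x_1,y_1)$ and $(x_2,y_2)$ are collinear, which is the claim.

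Finally, for the second sentence of the statement I would check that the ratio $x/y$ is genuinely well-defined. Any lattice element of the form $(x,0)$ must satisfy $(x,0) = a(N,0) + b(r,1)$ with $b=0$, hence $x$ is a multiple of $N$; under the bound $x^2 < N$ this forces $x=0$. So aside from the zero vector (which is collinear with everything and carries no ratio information), all short lattice vectors have nonzero second coordinate, and collinearity of the nonzero ones gives them a common value of $x/y \in \mathbb{Q}$. I do not expect any real obstacle here: the only subtlety is making sure the covolume computation is done with the correct orientation so that the ``multiple of $N$'' statement is literal rather than up to sign, but this is immediate from the explicit generators.
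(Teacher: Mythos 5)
Your proof is correct and follows essentially the same route as the argument in the cited source \cite{Boehm2012} (the paper itself only quotes the lemma): the determinant of any two lattice vectors is a multiple of $N$ because $\Lambda$ has covolume $N$, while the bound $x_i^2+y_i^2<N$ together with Lagrange's identity (equivalently Cauchy--Schwarz) forces that determinant to vanish, giving collinearity. Your additional check that short vectors with $y=0$ must be the zero vector correctly settles the well-definedness of $\frac{x}{y}$.
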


\begin{rem}
We have the inclusion $\text{im}(\varphi_N) \subset C_N$. Indeed, for each $\frac{u}{v}\in\mathbb Q$ in the domain of $\varphi_N$, set $q=1$.
\end{rem}

The following theorem states that if $N$ is large enough $\overline{r}\in\mathbb Z/N$ will have a preimage in~$\mathbb Q$. Furthermore, if there exists a preimage,
 then, in particular, any of the shortest vectors in the lattice $\Lambda$ will represent the preimage.

 Now, let $N',M\ge 2$ be integers, with $N=N'M$ and $\gcd(N',M)=1$. Let $a\ge 0$, $b\neq 0$ be integers such that $\gcd(b,N')=1$ and $a\equiv bs\mod N'$, with $0\le s\le N'-1$. In other words, $(\frac{a}{b})_{N'}=s$. Let $0\le r\le N$, be such that $\overline{r}=\overline{s}\in\mathbb Z/N'$. Let $0\le t\le M-1$ be such that $\overline{r}=\overline{t}\in\mathbb Z/M$. Furthermore suppose that $\psi_M(t)\neq\frac{a}{b}$. For practical purposes, we may think of $N'$ as the product of good primes and $M$ as the product of bad primes. It is proven in Lemma 4.3 in \cite{Boehm2012} that $\psi_N(r)=\frac{a}{b}$ regardless of the fact that $\psi_M(r)=\psi_M(t)$ gives a wrong result, provided that $M<<N'$.

\begin{thm} (Lemma 4.3, \cite{Boehm2012})\label{Lemma4.3}
We use the above notation. Suppose that $(a^2+b^2)M<N'$. Then, for all $(x,y)\in\Lambda$, with $x^2+y^2<N,$ we have $\frac{x}{y}=\frac{a}{b}$. Furthermore, if $\gcd(a, b) = 1$ and $(x, y)$ is a shortest nonzero vector in $\Lambda$,
we also have $gcd(x, y)|M$.

\end{thm}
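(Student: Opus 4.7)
The plan is to produce an explicit short lattice vector representing $a/b$ and then invoke Lemma~\ref{lemma:1}. Concretely, I would work with the vector $(Ma, Mb)$, which I claim lies in $\Lambda$. Since $\bar r = \bar s$ in $\mathbb{Z}/N'$ and $a \equiv bs \pmod{N'}$, combining these and multiplying by $M$ gives $Ma \equiv Mbr \pmod{MN'=N}$. Hence $Ma - Mbr = \alpha N$ for some integer $\alpha$, so $(Ma, Mb) = \alpha(N,0) + Mb(r,1) \in \Lambda$.

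Next I would use the hypothesis $(a^2+b^2)M < N'$ to control the squared norm of this vector: $(Ma)^2 + (Mb)^2 = M^2(a^2+b^2) < MN' = N$. By Lemma~\ref{lemma:1}, any $(x,y) \in \Lambda$ with $x^2 + y^2 < N$ is then collinear with $(Ma, Mb)$. Because collinearity in $\mathbb{Z}^2 \setminus \{0\}$ determines the rational slope, this forces $x/y = Ma/Mb = a/b$, establishing the first claim of the theorem.

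For the second part, assume $\gcd(a,b)=1$ and let $(x,y)$ be a shortest nonzero vector in $\Lambda$. Since $(Ma, Mb) \in \Lambda$ has squared norm less than $N$, so does $(x,y)$, and Lemma~\ref{lemma:1} again forces $(x,y)$ to be collinear with $(Ma,Mb)$. The intersection of $\Lambda$ with the line through $(x,y)$ is a rank-one sublattice generated by $(x,y)$ (any shorter element would contradict minimality), so $(Ma, Mb) = k(x,y)$ for some nonzero integer $k$. Taking gcds on each coordinate then gives $\gcd(x,y) = \gcd(Ma, Mb)/|k| = M\gcd(a,b)/|k| = M/|k|$, which divides $M$. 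The main, and essentially only, obstacle in this argument is spotting the correct lattice vector to work with: once $(Ma, Mb) \in \Lambda$ is identified and the hypothesis $(a^2+b^2)M < N'$ is matched against its squared norm, the theorem reduces to Lemma~\ref{lemma:1} and an elementary fact about primitive vectors on a lattice line.
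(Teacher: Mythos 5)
Your proof is correct, and it follows essentially the same route as the argument the paper defers to in \cite{Boehm2012}: exhibit $(Ma,Mb)\in\Lambda$ with squared norm $M^2(a^2+b^2)<MN'=N$, invoke Lemma \ref{lemma:1} for collinearity, and deduce the divisibility claim from the fact that a shortest vector generates $\Lambda$ intersected with its line. No gaps worth flagging.
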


We therefore can adopt the following error tolerant algorithm to reconstruct a rational number $x$ from $x_N\in Z/N$.

	\begin{algorithm}[ht]
		\begin{algorithmic}[1]
			\REQUIRE a product of distinct primes $N=p_1,\cdot\ldots\cdot p_n$ and a tuple $(\overline{r_1},\ldots,\overline{r_n})\in\mathbb Z/p_1\times\cdots \times\mathbb Z/p_n$.
   			\ENSURE a rational reconstruction of a representative of $(\overline{r_1},\ldots,\overline{r_n})$ in $\mathbb Z/N$.
			\STATE use the Chinese remainder isomorphism to lift the tuple $(r_1,\ldots,r_n)\in \mathbb Z/p_1\times\cdots \times\mathbb Z/p_n$ to $\overline{r}\in\mathbb Z/N$.
            \STATE compute a shortest vector $(a,b)$ in $\Lambda_{N,r}$.
			\STATE verify the result as \texttt{true} or \texttt{false}.
            \IF{the result is \texttt{true}}
            \RETURN{$\frac{a}{b}$}
            \ELSE
            \RETURN{\texttt{false}}
            \ENDIF
		\end{algorithmic}
  \caption{ \cite{Boehm2012} Rational reconstruction}
		\label{algo:reconstruct}
	\end{algorithm}

\begin{rem}
If Algorithm \ref{algo:reconstruct} gives $\texttt{false}$ as output, we can repeat the procedure by adding a new prime $p_{n+1}\neq p_1,\ldots, p_n$ to the product of primes $N=p_1\cdots p_n$, computing a new representation of the number that needs to be lifted in $\mathbb Z/p_1\times\cdots \times\mathbb Z/p_n$, and applying the algorithm again. If, eventually, the good primes outnumber the bad primes the Algorithm will give a correct answer.
\end{rem}

\begin{exa}
\label{exm:recostruction}
	To illustrate Algorithm \ref{algo:reconstruct}, we consider the example to reconstruct $x=\frac{8}{7}$ using the primes $3, 5 , 11$ and $103$. From the Chinese remainder isomorphism
	$$\chi : \mathbb{Z}/3 \mathbb{Z} \times \mathbb{Z}/5 \mathbb{Z} \times \mathbb{Z}/11 \mathbb{Z} \times \mathbb{Z}/103 \mathbb{Z}\longrightarrow \mathbb{Z}/16995 \mathbb{Z}.$$
	The residue class of $x $ in $\mathbb{Z}/3 \mathbb{Z} \times \mathbb{Z}/5 \mathbb{Z} \times \mathbb{Z}/11 \mathbb{Z} \times \mathbb{Z}/103 \mathbb{Z}$ is
	\begin{align*}
	(\overline{8}\cdot \overline{1}, \overline{8} \cdot \overline{3}, \overline{8} \cdot \overline{8}, \overline{8} \cdot \overline{59}) &= (\overline{2}, \overline{4}, \overline{9},\overline{60})
	\end{align*}
	and $$\chi(\overline{2},\overline{4},\overline{9},\overline{60})=\overline{2429}.$$
	The error tolerant algorithm reconstructs $\frac{8}{7}$ from $\overline{2429}$ with the modulus $16995$.
	
	Now, assume that we made a mistake instead of getting $(\overline{2}, \overline{4}, \overline{9}, \overline{60})$ we have $(\overline{2}, \overline{0}, \overline{9}, \overline{60})$.\newline
	Then $$\chi((\overline{2}, \overline{0}, \overline{9}, \overline{60})) =
	\overline{16025}$$  and we reconstruct $\frac{8}{7}$ using the error tolerant
	algorithm.  Note that the Farey preimage does not return a reconstruction at this stage.
	
	If we change the  congruence modulo the prime  $103$ to $\overline{61}$. We get $$\chi({\overline{2},\overline{4},\overline{9},\overline{61}}) = \overline{3254}.$$
	Then we reconstruct the wrong result $-\frac{17}{47}$. In this case, the prime
	$103$ is a bad prime. 
	
	Generally, it is impossible to detect if a prime is bad so at the end of the algorithm we need to check the correctness of the result.
	\end{exa}
\subsection{Generic framework for modular algorithms in commutative algebra}
\label{sub:genericFramework}In Section \ref{rational reconstruction} we gave an approach to solve problems given by a deterministic algorithm over the rational numbers, that is also applicable over $\mathbb Z/p$, via the Chinese Remainder Theorem. In this section we extend this approach to algorithms over tuples of sets of vectors in a free module over $\mathbb Q$, that is also applicable over tuples of sets of vectors in a free module over $\mathbb Z/p$. This includes algorithms over important structures in commutative algebra, for example, polynomial rings, ideals, modules, quotient rings etc. After we have given a generic algorithm for this approach we will conclude the section by giving some important examples.

We use the following notation:
 $$R_{0}=\mathbb{Q}[X_{1},\ldots,X_{n}],\  R_{N}=\mathbb{Z}/N[X_{1},\ldots,X_{n}]$$ and $$R\left\langle m_{1},\ldots,m_{s}\right\rangle =\mathcal{M}(R^{m_{1}})\times\ldots\times \mathcal{M}(R^{m_{s}}),$$ with $m_i \geq 1 $  an integer, for $i=1,\ldots,s$, where $$\mathcal M (R^n) = \{ A\subset R^n \mid \left| A\right| < \infty \text{ and } 0\not\in A\}.$$

Consider a deterministic algorithm $\mathcal A$ which takes as input an element $P\in
R_{0}\left\langle m_{1},\ldots,m_{s}\right\rangle $ (that is, a tuple of sets
of vectors in a free module over $\mathbb Q$) and computes an element $$Q(0)=\prod_{i=1}^{t}Q_i\in R_{0}\left\langle
n_{1},\ldots,n_{t}\right\rangle $$ such that each $Q_{i}$, for $i=1,\ldots,t$, is
a reduced Gr\"{o}bner basis with respect to the global monomial ordering $>_{i}$ on
$R_0^{n_{i}}$. The algorithm under consideration also needs to be applicable over $R_{p}$ for $p$ a prime. We denote $\mathcal A$ applied over $R_p$ as $\mathcal A_p$ 
\begin{defn}
For a
set of vectors $A\subset R^{m}$ we define the set of lead monomials as
\[
\operatorname*{LM}(A)=\left\{  L(a)\mid a\in A\right\}
\]
where $L(a)$ is the lead monomial of $a$ with respect to a monomial ordering $>$ on $R^{m}$.
\end{defn}

In order to use modular methods via the Chinese Remainder Theorem we need to map our input in $R_0\langle m_1,\ldots,m_s\rangle$ to $R_N\langle m_1,\ldots,m_s\rangle$, where $N$ is an integer. For that purpose we need the reduction map.

\begin{defn} (Reduction modulo $N$)
For a polynomial $f=\sum_ic_iv_i\in R_{0}$, where the $c_i's$ are the coefficients in $\mathbb Q$ and the $v_i's$ are the monomials, we define the reduction $f_{N}\in R_{N}$ modulo
$N$ as $f_N=\sum_i\overline{c_i}v_i$, where $\overline c_i$ is the residue class of $c_i$ modulo $N$.\\
For%
\[
P=(\left\{  f_{1,1},\ldots,f_{1,w_{1}}\right\}  ,\ldots,\left\{
f_{s,1},\ldots,f_{s,w_{s}}\right\}  )\in R_{0}\left\langle m_{1},\ldots
,m_{s}\right\rangle
\]
we define%
\[
P_{N}=(\left\{  (f_{1,1})_{N},\ldots,(f_{1,w_{1}})_{N}\right\}  ,\ldots
,\left\{  (f_{s,1})_{N},\ldots,(f_{s,w_{s}})_{N}\right\}  )
\]
whenever all denominators of coefficients (in their coprime representation) are coprime to $N$, and consider the \textbf{reduction map}%
\[%
\begin{tabular}
[c]{llll}%
$\phi_{N}:$ & $D_{N}$ & $\rightarrow$ & $R_{N}\left\langle m_{1},\ldots
,m_{s}\right\rangle $\\
& $P$ & $\mapsto$ & $P_{N}$%
\end{tabular}
\]
with
\[
D_{N}=\left\{  P\in R_{0}\left\langle m_{1},\ldots,m_{s}\right\rangle \mid
P_{N}\text{ well-defined}\right\}  \text{.}%
\]
\end{defn}

Using the notation in Section \ref{rational reconstruction}, we assume $N=p_1\cdot\ldots\cdot p_r$ is a product of distinct primes. Following our approach: after we have reduced the given input data modulo $N$, we apply our given deterministic algorithm over $R_{p_j}$ with output $Q_{p_j}=(Q_{p_j})_1\times\ldots\times(Q_{p_j})_t\in R_{p_j}\langle n_1,\ldots,n_t\rangle$, for each $j$. To lift the modular results we need to lift the sets $(Q_{p_j})_i$ for each $i=1,\ldots,t$. In order to do that the sets need to be compatible, that is it needs to be ordered so that, for a fixed $i$, the $k$'th vectors of each of the sets is liftable as an element of an Cartesian product by the Chinese Remainder Theorem. Knowing that for good primes $p_j$, each of the sets $(Q_{p_j})_i$ is a reduced Gr\"{o}bner basis, for a fixed $i$, the sets $(Q_{p_j})_i$ should have the same lead ideal for all $j$. Hence, for all good primes $p_j$ ordering the elements in each set according to their lead monomials, using $>_i$, will assure compatibility. The flattening map is used for that purpose. 

\begin{defn}
For a polynomial ring $R$, we define the \textbf{flattening map}%
\[
\omega:U\rightarrow R^{\infty}%
\]
with%
\begin{multline*}
U= \Bigl\{  (\left\{  f_{1,1},\ldots,f_{1,w_{1}}\right\}  ,\ldots,\left\{
f_{s,1},\ldots,f_{s,w_{s}}\right\}  )\in R\left\langle m_{1},\ldots
,m_{s}\right\rangle \mid \\
 L(f_{i,1}),\ldots,L(f_{i,w_{i}})\text{ pairwise
distinct }\forall i \Bigr\}
\end{multline*}
by mapping%
\[
(\left\{  f_{1,1},\ldots,f_{1,w_{1}}\right\}  ,\ldots,\left\{  f_{s,1}%
,\ldots,f_{s,w_{s}}\right\}  )\mapsto\operatorname*{st}(f_{1,\sigma^{1}%
(1)},\ldots,f_{1,\sigma^{1}(w_{1})},\ldots,f_{s,\sigma^{s}(1)},\ldots
,f_{s,\sigma^{s}(w_{s})})
\]
where $\sigma^{i}\in S_{w_{i}}$ is the permutation, which sorts $f_{i,1}%
,\ldots,f_{i,w_{i}}$ according to their lead monomial, and
\[%
\begin{tabular}
[c]{llll}%
$\operatorname*{st}:$ & $R^{a_{1}}\times\ldots\times R^{a_{l}}$ &
$\rightarrow$ & $R^{a_{1}+\ldots+a_{l}}$\\
& $\left(  \left(
\begin{array}
[c]{c}%
g_{1,1}\\
\vdots\\
g_{a_{1},1}%
\end{array}
\right)  ,\ldots,\left(
\begin{array}
[c]{c}%
g_{l,l}\\
\vdots\\
g_{a_{l},l}%
\end{array}
\right)  \right)  $ & $\mapsto$ & $\left(
\begin{array}
[c]{c}%
g_{1,1}\\
\vdots\\
g_{a_{1},1}\\
\vdots\\
g_{a_{l},l}%
\end{array}
\right)  $%
\end{tabular}
\]
For integers $N_{1}, \dots, N_{r}$, and%
\[
T_r=R_{N_{1}}\left\langle m_{1},\ldots,m_{s}\right\rangle
\times\ldots\times R_{N_r}\left\langle m_1,\ldots,m_s\right\rangle\]
we define the \textbf{compatible flattening map} as%
\[%
\begin{tabular}
[c]{llll}%
$\tau:$ & $E_r$ & $\rightarrow$ & $R_{N_{1}}^{\infty}\times\ldots\times R_{N_{r}}^{\infty}$\\
& $(a_1,\ldots,a_r)$ & $\mapsto$ & $(\omega(a_1),\ldots,\omega(a_r))$%
\end{tabular}
\]
where%
\[
E_r=\left\{  (a_1,\ldots,a_r)\in T\mid\operatorname*{LM}(a_{1,i})=\ldots=\operatorname*{LM}%
(a_{r,i})\text{ },\forall i=1,\ldots,s\right\}
\]
\end{defn}

Since there may be bad primes in the set $\{p_j\mid j=1,\ldots,r\}$, the product $Q_{p_1}\times\ldots\times Q_{p_r}$ may not be in $E_r$. To rectify the product such that the different $Q_{p_j}$'s are compatible, we can use a \begin{bf}majority vote\end{bf}, that is we keep a biggest set of primes such that $Q_{p_1}\times\ldots\times Q_{p_k}\in E_k$ and replace the old product by the new one. To ensure a correct result, after applying the Chinese Remainder Theorem and lifting the result to $R_0\langle m_1,\ldots, m_s\rangle$, the set $\mathbb P=\{p_1,\ldots,p_k\}$ of primes should be sufficient, as defined below:

\begin{defn}
Let $\mathbb P$ be a finite set of primes. Let $\mathcal A$ be a deterministic algorithm as described at the beginning of the section, with input $P\in R_0\langle m_1,\ldots,m_s\rangle$ and output $Q(0)=\prod_{i=1}^{t}Q_i\in R_0\langle m_1,\ldots,m_s\rangle$. Let $N'$ be the product of all good primes and $M$ be the product of all bad primes. The set $\mathbb P$ is said to be sufficiently large if $N'>(a^2+b^2)M$ for all the  coefficients $\frac{a}{b}$ of each of the $Q_i$'s.
\end{defn}

Now in the first iteration of the loop the number of bad primes may be more than the number of good primes. Using the majority vote as described above, we may keep a set of bad primes. Since the good primes are discarded, it is possible that only bad primes are accumulated in each loop. Hence we will never reach a sufficiently large set of primes, which implies that the algorithm is not error tolerant. To avoid such a situation, we do a weighted cardinallity count. We weigh the vote of the primes in such a way that the sum of the votes of the newly added primes in the while-loop weights strictly more than the sum of the votes of the primes in previous loops. We call such a voting system a {\bf weighted majority vote}.    

After, for a fixed $i$, sorting and stacking the vectors in the sets $(Q_{p_j})_i$ into one vector, such that our result is liftable to $R_N^{\infty}$, we apply the Chinese Remainder Theorem, coefficient by coefficient, for each compatible element of the cartesian product $R_{p_1}^\infty\times\ldots\times R_{p_k}^\infty$ of stacked vectors. 

\begin{defn}
The \textbf{Chinese remainder map}%
\[
\operatorname*{chrem}:\tau(E)\rightarrow R_{N_{1}\cdot\ldots\cdot N_{r}}^{\infty}%
\]
is defined by applying the Chinese remainder theorem
coefficient-by-coefficient. 
\end{defn}

Finally, we define the Farey map in this context:
\begin{defn}
The \textbf{Farey map} is
\[%
\Psi_{N}:F\rightarrow R_{0}^{\infty}
\]
with domain%
\[
F=\left\{  X\in R_{N}^{\infty}\mid\text{each coefficient in }X\text{ is in }%
C_{N}\right\},
\]
defined by applying the map $\psi_{N}:C_{N}\rightarrow\mathbb{Q}$
defined in Section \ref{rational reconstruction} to each coefficient appearing in $X$. 
\end{defn}

Finally, we apply%
\[
\omega^{-1}:\omega(U)\rightarrow U\subset R_0\left\langle m_{1},\ldots
,m_{s}\right\rangle
\]
in case the Farey lift $Q'$ is in $\omega(U)$, that is, the lead monomials of each of the sets of components in $Q'$ representing a reduced Gr\"{o}bner basis, stay pairwise destinct. 

Since there is no proof that the set of primes are sufficiently large a priori and the verification test may be computationally expensive, it  makes sense to do another test, the {\bf pTest} in positive characteristic:
Randomly choose a prime number $p
\not \in \mathbb P$ such that $p$ does not divide any coefficient in $P$ or $\omega^{-1}(Q')=Q$. Return {\bf true} if $Q_p=\mathcal A_p(P_p)$. 

In Algorithm \ref{generic} we give an algorithmic framework to outline the above approach.
	\begin{algorithm}[ht]
		\begin{algorithmic}[1]
			\REQUIRE $P\in
                     R_{0}\left\langle m_{1},\ldots,m_{s}\right\rangle $, $P_i\in R_0^{m_i}$, a deterministic algorithm $\mathcal A$ over $R_0=\mathbb Q[X_1,\ldots,X_n]$ that is also applicable over $R_p=\mathbb Z/p[X_1,\ldots,X_n]$, and computes an element $Q(0)=\prod_{i=1}^{t}Q_i\in R_{0}\left\langle
n_{1},\ldots,n_{t}\right\rangle $ such that each $Q_{i}$, for $i=1,\ldots,t$, is
a reduced Gr\"{o}bner basis with respect to the global monomial ordering $>_{i}$ on
$R_0^{n_{i}}$, as well as, a verification test.
			\ENSURE $Q=\mathcal A(P)$.
			\STATE choose a finite random set $\mathbb P=\{p_1,\ldots, p_r\}$ of distinct primes $\mathcal{P}$, and let $N=p_1\cdot\ldots\cdot p_r$. 
            \WHILE{true}
            \STATE set result $=$ true.
			\STATE compute $\Phi_N(P)=P_N$, where $\Phi_N$ is the reduction map modulo $N$.
			\STATE form the products $Q_{p_j}=(Q_{p_j})_1\times\ldots\times (Q_{p_j})_t$ by applying $\mathcal A_{p_j}$, $j=1,\ldots,r$ to $P_{p_j}$.
            \STATE do a 
            \begin{bf}weighted majority vote\end{bf} on the $p_j'$s, finding a biggest set of $p_j$'s, such that $LM(Q_{p_j})$ is the same for all the $p_j'$s in the set, in such a way that the sum of the votes of primes generated before the current while-loop weigh strictly less than the sum of the votes of primes generated in the current loop. Let $\{p_1,\ldots,p_k\}$ be a set of primes that won the vote.
            \STATE let $S=Q_{p_1}\times\ldots\times Q_{p_k}$
            \STATE let $S'=\text{chrem}\circ\tau(S)$
            \IF{$S'$ is in the domain of the Farey map}
            \STATE $Q'=(q_1,\ldots,q_\nu)=\Psi(S')$
                \IF{for all $i\neq j$,  $L(q_i)\neq L(q_j)$}
                    \STATE $Q=\omega^{-1}(Q')$
                    \IF{$Q$ pass the {\bf PTest}}
                        \IF{$Q$ pass the verification test}
                        \RETURN $Q$
                        \ENDIF
                    \ENDIF
                \ENDIF
            \ENDIF
            \STATE let $\{p_{k+1},\ldots,p_{k+r}\}$ be a set of  primes, such that none of the primes are previously chosen, and let $N=p_1,\ldots,p_{k+r}$.
            \STATE $r=r+k$.
            \ENDWHILE
		\end{algorithmic}
		\caption{Generic modular framework for deterministic algorithms in commutative algebra}
  \label{generic}
	\end{algorithm}

We now discuss some examples to which Algorithm \ref{generic} is applicable.

For computational purposes, we represent an ideal (resp. module) by a set of generators. To ensure uniqueness, we represent the ideal (resp. module) by a reduced Gr\"{o}bner basis. To compute a reduced Gr\"{o}bner basis of an ideal $I\in R_0$ (resp. module $M\in R_0\langle m_1\rangle$) we can use any algorithm $\mathcal A_G$ computing a reduced Gr\"{o}bner basis of an ideal (resp. module) over $R_0$, with input any set of generators of the ideal (resp. module), that is also applicable over $\mathbb R/p$, $p$ a prime. To parallelize $\mathcal{A_G}$ we can apply Algorithm \ref{generic} to $I$ (or $M$) and $\mathcal A_G$. As verification test we can check whether the output $G$ is a reduced Gr\"{o}bner basis generating $I$ (resp. $M$). For Cartesian products of ideals or modules a Gr\"{o}bner basis of each of the components can be computed, similarly, simultaneously.
This implies that we can combine the algorithm $\mathcal A_G$ with any deterministic algorithm with input $P\in R_0\langle m_1,\ldots,m_s\rangle$ over $R_0$, that is also applicable over $R_p$, with output in $R_0\langle n_1,\ldots,n_t\rangle$, to create an algorithm to which Algorithm \ref{generic} is applicable.

Lastly, we discuss how we can parallelize algorithms with input that is Cartesian products of subquotients of modules over $R_0$ via Algorithm \ref{generic}. We can represent a subquotient of a module over $R_0$ by a module of generators $S$ and a module of relations $T$. Clearly $T\subset S$.  
If we have a deterministic algorithm over $R_0$, that is also applicable over $R_p$, $p$ a prime, with output a subquotient $V_0=S_0/T_0$ over $R_0$, and output $V_p=S_p/T_p$ over $R_p$, we need to find a way to represent  $V_p$ such that for different primes the output is compatible to be lifted by the Chinese Remainder Theorem. Algorithm \ref{generic} requires that the output of the given algorithm $\mathcal{A}$ computes an element $Q(0)=\prod_{i=1}^{t}Q_i\in R_{0}\left\langle
n_{1},\ldots,n_{t}\right\rangle $ such that each $Q_{i}$, for $i=1,\ldots,t$, is
a reduced Gr\"{o}bner basis with respect to the global monomial ordering $>_{i}$ on
$R_0^{n_{i}}$. Since reduced Gr\"{o}bner bases are unique, it is easy to determine which elements of the Gr\"{o}bner bases over the different primes, in case the primes are good primes, should be lifted together, by looking at the lead monomials of the elements. 
Taking a reduced Gr\"{o}bner basis $G_{T_0}=\{g_1,\ldots,g_n
\}$ for $T_0$, $T_0$ can be represented uniquely. Since $T_0\subset S_0$, we can extent $G_{T_0}$ to a Gr\"{o}bner basis for $G_{S_0}=\{g_1,\ldots,g_n,g_{n+1},\ldots,g_m\}$. Let $\overline{g}_i=\NF(g_i \mid \{\overline{g}_1,\ldots \overline{g}_{i-1}\})$, where $\NF(f \mid G)$ is the reduced normal form of $f$ with respect to $G$. Then $g_1=\overline{g}_1,\ldots, g_n=\overline{g}_n$ and $\{\overline{g}_1,\ldots,\overline{g}_m\}$ is a reduced Gr\"{o}bner basis of $S_0$, and hence unique. Hence, representing a product of subquotients $\prod_{i=1}^{t}S_i/T_i$ by $Q(0)=\prod_{i=1}^{2t}G_i$, where $G_i$ is the reduced Gr\"{o}bner basis of $S_{\frac{i+1}{2}}$ for $i$ uneven, and  $G_i$ is the reduced Gr\"{o}bner basis of $T_{\frac{i}{2}}$ for $i$ even, we can apply Algorithm \ref{generic} to a deterministic algorithm over $R_0$, with input in $R_0\langle m_1,\ldots,m_s\rangle$, that is applicable over $R_p$, $p$ a prime, computing Cartesian products of subquotients over $R_0$.

\section{Rational Maps Toolbox} \label{sec3}
One of the standard approaches of modern mathematics to understand mathematical
objects is to study maps between the objects. It is, in particular, useful to study maps which conserve
interesting properties of the objects under consideration. For example, in group theory, we study
group homomorphisms which preserve the group operations. This section will focus
on rational maps between projective varieties and especially describe their image, domain and inverse in case if it is birational.
\newline
As is practise in Projective algebraic geometry, we work in a projective space $\mathbb{P}_K^n$ over a
algebraically closed field $K$. A bit of terminology, we call \textbf{quasi-projective variety} an open subset of a projective variety. 
We represent an element of
$\mathbb{P}_K^n$ by homogeneous coordinates $(p_0:{\dots}: p_n)$. The results of this section can be found in \cite{boehm2017} and \cite{Simis2004}. We provide new proofs and relation between the two resources. The first section recalls the backgrounds about rational maps. The second section presents a algorithm to compute the image of a rational map. The third section talks about a test of birationality as well as a algorithm for computing the inverse of birational map. The last section gives  an algorithm to determine the domain of a rational map.

\subsection{Basic}
We start by giving basic definitions related to rational map. Then, we show how rational maps are represented in practice.

\begin{defn}[Morphisms and Rational maps]   \ 
	
	\label{defn:rational}
\begin{itemize}
  \item[\textup{(i)}] Let $X \subset \mathbb{P}_K^m$ be a quasi-projective variety. Then, a function $f:X \rightarrow K $ is called a \textbf{regular function} at a point $p \in \mathbb{P}_K^m$ if there is an open neighbourhood $U \subset X$ of $p$ and homogeneous polynomials $g,h \in K[t_0,\dots, t_m]$ of the same degree such that, for all $q \in U$, we have 
  $$f(q) = \frac{g(q)}{h(q)} \text{ and } h(q) \neq 0.$$
  If $f$ is regular at each point of $X$, we say that $f$ is regular on $X.$ We denotes by $\mathscr{O}(X)$ the $K$-algebra of regular functions on $X$ and $\mathscr{O}_X(U)$ the $K$-algebra of regular function $U \rightarrow K$ on  an open subset $U $ of $X$. 
  \item[\textup{(ii)}] A \textbf{morphism of quasi-projective varieties}  $$f : X \subset \mathbb{P}_K^n  \rightarrow Y \subset \mathbb{P}_K^m $$ is a continuous map such that for every open subset $U $ of $Y$ and regular function $\varphi \in \mathscr{O}_Y(U) $,  the pull-back function $$ f^* \varphi := \varphi \circ f : f^{-1} (U) \rightarrow K$$ is a regular function in $\mathscr{O}_X(f^{-1}(U)).$ 
  
	\item[\textup{(iii)}] Let $X,Y$ be irreducible projective varieties. A \textbf {rational map}  $\Phi$ from $X$ to $Y$, denoted by
	$$ \Phi : X \dashrightarrow Y $$ is an equivalence class of morphisms from a non-empty open subset $U \subset X$ to $Y$. Two such morphisms $U \rightarrow Y$ and $V \rightarrow Y$ are equivalent if they agree on $U \cap V.$
\end{itemize}
\end{defn}
\begin{rem}
 The separateness of a projective variety $X \subset \mathbb{P}^m_K$, that is the diagonal morphism $ \Delta : X \rightarrow X \times X $ is a closed immersion ( $\Delta(X) \subset X \times X$ is  closed and $X \rightarrow \Delta(X)$ is an homeomorphism) , is an important property  for the results in this section, since the uniqueness of the extension  of morphisms is satisfied in this setting as is shown in the following proposition.
\end{rem}

\begin{pro}\cite{Liu2006}
	Let $X, Y$ be projective varieties. Let $f$ and $g$ be two morphisms  from $X$ to $Y$. If there is a dense  open  subset $U \subset X$ such that $f_{|U} = g_{|U}$, then the two morphisms $f$ and $g$ are the same. \label{pro:uniqueness}
\end{pro}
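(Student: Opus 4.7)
The plan is to use the separatedness of the target variety $Y$, as flagged in the preceding remark, together with the topological fact that a closed subset of $X$ containing a dense open must equal $X$.

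First, I would assemble the two morphisms into a single map $h = (f,g): X \to Y \times Y$ obtained from the universal property of the product. Let $\Delta_Y \subset Y \times Y$ denote the diagonal. Since $Y$ is projective, the remark tells us that $\Delta_Y$ is closed in $Y \times Y$. The preimage $E := h^{-1}(\Delta_Y) \subset X$ is therefore closed, and set-theoretically $E$ is precisely the locus on which $f$ and $g$ coincide.

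Next, I would note that $U \subset E$ by hypothesis, and since $U$ is dense in $X$ while $E$ is closed, we obtain $E = X$ as topological subspaces of $X$. This already shows that $f(p) = g(p)$ for every point $p \in X$.

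The remaining step, and the only delicate one, is to upgrade this pointwise equality to equality of morphisms, i.e.\ to check that $f$ and $g$ also induce the same pullback on structure sheaves. Here I would restrict to affine open charts: choose affine opens $V \subset Y$ and $W \subset f^{-1}(V) \cap g^{-1}(V) \subset X$ meeting $U$. The two pullbacks $f^\#, g^\# : \mathscr{O}(V) \to \mathscr{O}(W)$ agree after further restriction to the dense open $W \cap U$; because $X$ is a variety (hence reduced, and in fact integral if we take $X$ irreducible), the restriction map $\mathscr{O}(W) \to \mathscr{O}(W \cap U)$ is injective, forcing $f^\# = g^\#$ on $V$. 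Covering $Y$ by such affines gives $f = g$ as morphisms. The main obstacle to avoid is conflating the set-theoretic equality with equality of morphisms; the separatedness of $Y$ produces the closed equalizer at the level of points, and reducedness of $X$ is what promotes this to the scheme-level statement.
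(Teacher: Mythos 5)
Your proof is correct, and it is essentially the same argument as the one behind the result the paper simply cites (Liu, Proposition 3.3.11): separatedness of the projective target makes the equalizer $h^{-1}(\Delta_Y)$ closed, density forces it to be all of $X$, and reducedness of $X$ (injectivity of restriction to a dense open) upgrades pointwise agreement to equality of morphisms. In the paper's classical variety setting the last step is in fact automatic, since a morphism there is determined by its underlying map of sets, but your extra care is harmless and matches the scheme-theoretic source.
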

\begin{proof}
	See \cite{Liu2006} Proposition 3.11.
\end{proof}
\begin{rem}
	Let  $\Phi :X \dashrightarrow Y$ be a rational map, then Proposition \ref{pro:uniqueness} leads to the  existence of an unique morphism $f: U \rightarrow Y$ in the equivalence class of the rational map $\Phi$, where the open set $U \subset X$ is maximal for the inclusion. That is, if $V \rightarrow X$ is another element in the class, we have $V \subset U$ . Furthermore, every morphism $g:V \rightarrow Y$ in the equivalence class of $\Phi$, satisfy $f_{|V} = g$. Indeed, all the morphisms in the class glue together to a  morphism from $U$ to $Y$. The uniqueness is a consequence of  Proposition \ref{pro:uniqueness}. We call such an open subset $U$ the \textbf{domain of definition} $D(\Phi)$ of $\Phi$. \label{rem:domain}
\end{rem}
\begin{pro}
	Let $X \subset \mathbb{P}_K^n$ be a projective variety and let $f_0, \dots, f_m \in K[t_0,\dots,t_n]$ be homogeneous polynomials of the same degree. Consider the open subset $U = X \setminus V(f_0, \dots, f_m) $ of $X$. Then the map
	$$ f : U \rightarrow \mathbb{P}_K^m, \, t \mapsto (f_0(t): \cdots : f_m(t) )$$
	defines a morphism. \label{prop:homogeneouspoly}
\end{pro}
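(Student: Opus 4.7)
The plan is to unpack the definition of a morphism of quasi-projective varieties (Definition \ref{defn:rational}(ii)) and check each piece separately: well-definedness of the underlying set-theoretic map, continuity, and the pull-back condition for regular functions. All three should follow from the homogeneity hypothesis on the $f_i$.

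For well-definedness I would first note that $U$ is a genuine open subset of $X$ by construction, and that on $U$ we never have all $f_i(t)=0$, so $(f_0(t):\dots:f_m(t))$ is a well-defined point of $\mathbb{P}^m_K$. Because $f_0,\dots,f_m$ share a common degree $d$, rescaling a representative of $t\in\mathbb{P}^n_K$ by $\lambda\in K^*$ changes every coordinate by the common factor $\lambda^d$, which is cancelled when we pass to $\mathbb{P}^m_K$; so the image is independent of the choice of homogeneous representative.

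Continuity is a routine verification from the Zariski topology: any closed subset of $\mathbb{P}^m_K$ has the form $V(g_1,\dots,g_r)$ for homogeneous $g_j$, and its preimage under $f$ is $U\cap V\bigl(g_1(f_0,\dots,f_m),\dots,g_r(f_0,\dots,f_m)\bigr)$. Since composition preserves homogeneity, each $g_j(f_0,\dots,f_m)$ is homogeneous in $t_0,\dots,t_n$, so the preimage is closed in $U$.

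The main work, and the piece I expect to take most care, is the pull-back condition. Given an open $V\subset\mathbb{P}^m_K$ and $\varphi\in\mathscr{O}_{\mathbb{P}^m_K}(V)$, I want to show $f^*\varphi$ is regular on $f^{-1}(V)\cap U$. I would pick an arbitrary point $p\in f^{-1}(V)\cap U$ and use regularity of $\varphi$ at $f(p)$ to obtain a neighborhood $W\subset V$ of $f(p)$ together with homogeneous polynomials $g,h\in K[s_0,\dots,s_m]$ of the same degree $e$ with $\varphi = g/h$ and $h$ nonzero on $W$. Pulling back gives, on the open neighborhood $f^{-1}(W)\cap U$ of $p$, the identity $f^*\varphi = g(f_0,\dots,f_m)/h(f_0,\dots,f_m)$. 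Both numerator and denominator are homogeneous in $t_0,\dots,t_n$ of the common degree $de$, and $h(f_0,\dots,f_m)(p)=h(f(p))\neq 0$, which is exactly the local representation required by Definition \ref{defn:rational}(i). Assembling these local pieces yields $f^*\varphi\in\mathscr{O}_X(f^{-1}(V)\cap U)$, which completes the verification that $f$ is a morphism.
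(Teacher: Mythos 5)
Your proof is correct, but it follows a different route from the one taken in the paper. The paper does not verify Definition \ref{defn:rational}(ii) directly: it covers the target by the standard affine charts $V_i=\{x_i\neq 0\}$, notes that $U_i=f^{-1}(V_i)=\{t\in U : f_i(t)\neq 0\}$ cover $U$, writes $f|_{U_i}$ in the affine coordinates of $V_i$ as the tuple $\bigl(\tfrac{f_0}{f_i},\dots,\widehat{\tfrac{f_i}{f_i}},\dots,\tfrac{f_m}{f_i}\bigr)$ of regular functions on $U_i$, concludes each $f|_{U_i}$ is a morphism, and then glues. That argument is short but leans on two background facts it does not spell out: that a map into an affine chart whose coordinate functions are regular is a morphism, and that morphisms glue. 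You instead unpack the definition itself: well-definedness from the common degree $d$, continuity by pulling $V(g_1,\dots,g_r)$ back to $U\cap V\bigl(g_1(f_0,\dots,f_m),\dots,g_r(f_0,\dots,f_m)\bigr)$, and the pull-back condition by locally writing a regular $\varphi$ as $g/h$ with $g,h$ homogeneous of equal degree $e$ and nonvanishing denominator, so that $f^*\varphi=g(f_0,\dots,f_m)/h(f_0,\dots,f_m)$ is a ratio of homogeneous polynomials of the common degree $de$ with denominator nonvanishing near the chosen point. Your version is more self-contained (continuity, in particular, is proved rather than absorbed into a cited characterization), at the cost of being longer; the paper's version is quicker once the affine characterization of morphisms and the gluing property are taken as known, which is exactly what it does. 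Both are complete and correct arguments for the statement.
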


\begin{proof}
	First, note that the map $f$ is well-defined set-theoretically. Let ${(V_i)}_{0 \leq i \leq m}$ be the affine open cover of $\mathbb{P}_K^m$ with $$V_i = \{(x_0: \cdots : x_m) \in \mathbb{P}_K^m \, : \, x_i \neq 0 \}.$$ Then the open subsets  $U_i = f^{-1} (V_i) = \{ t \in U \, : \, f_i(t) \neq 0\} \subset U$ cover $U$. It follows that in the affine coordinate of $V_i$, the map $f_{|U_i}$ is defined by the tuple $(\frac{f_0}{f_i} , \dots , \hat{\frac{f_i}{f_i}} , \cdots , \frac{f_m}{f_i})$. Since each quotient of polynomials $\frac{f_j}{f_i}$ for $j=0, \dots, m$, with $j \neq i$, is regular function over $U_i$, it follows that $f_{| U_i}$ is a morphism. Hence, using the gluing property of a morphism, $f$ is a morphism.
\end{proof}
\begin{rem}\label{rem:homogeneous2}
	It is important to note that not every morphism $U \subset \mathbb{P}_K^n \rightarrow \mathbb{P}_K^m$ is induced by homogeneous polynomials of the same degree. Instead we have the following:\newline
 If $f:U \subset \mathbb{P}_K^n \rightarrow \mathbb{P}_K^m$ is a  morphism, $f$ is locally defined by homogeneous polynomials of the same degree. That is, for every element $a \in U$, there is an open subset $U_a \subset U$  and homogeneous polynomials $f_0, \dots, f_m \in K[t_0,\dots, t_n]$ of the same degree, such that $a \in U_a $ and $f(t) = (f_0(t):\cdots:f_m(t))$ for all  $t \in U_a$. Indeed, using the affine open cover ${(V_i)}_{0 \leq i \leq m}$ of $\mathbb{P}_K^m$ that we have seen  in the proof of Proposition \ref{prop:homogeneouspoly}, we get a morphism $f_{|U_i} : U_i \rightarrow V_i$ with $U_i = f^{-1}(V_i)$ for $0 \leq i \leq m$. Since the open subset $U_i$ cover $U$, by symmetry we can assume that $a \in U_0.$ As $V_0 \subset \mathbb{A}_K^m$ is affine, the morphism $f_{|U_0}$ has the form
	$$ f_{|U_0} : U_0 \rightarrow V_0, \, t \mapsto (\varphi_1(t),\dots,\varphi_m(t) )$$
	where the $\varphi_i$'s are regular functions in $\mathscr{O}_U(U_0)$ for $1 \leq i \leq m.$ By definition, for each $1 \leq i \leq m$, there exist an open subset $X_i \subset U$ and homogeneous polynomials of the same degree $ f_i, g_i$ such that $a \in X_i$ and $\varphi_i(t) = \frac{f_i(t)}{g_i(t)}$ for all $t \in X_i.$ \\ Set $U_a = \bigcap_{i=1}^m X_i$. Then $$f_{|U_a} : U_a \rightarrow V_0, \, t \mapsto \left(\frac{f_1(t)}{g_1(t)}, \dots, \frac{f_m(t)}{g_m(t)} \right)$$ in the affine coordinate of $V_0$. In the homogeneous coordinate of $\mathbb{P}_K^m$, we have 
	$$ f_{|U_a} : U_a \rightarrow \mathbb{P}_K^m,\, t \mapsto \left(1: \frac{f_1(t)}{g_1(t)}: \cdots: \frac{f_m(t)}{g_m(t)}  \right)$$
	Let $g = \prod_{1}^{m} g_i$ and $\hat{g_i} = \frac{g}{g_i}$. It follows that 
	$$ f_{|U_a} \rightarrow \mathbb{P}_K^m, \, t \mapsto \left(g(t):f_1(t) \cdot \hat{g_1}(t):\cdots: f_m(t) \cdot \hat{g_m}(t)  \right).$$
	This result allows us to restrict our study of rational map to only studying morphisms induced by homogeneous polynomials as in Proposition \ref{prop:homogeneouspoly}. 
 \end{rem}

\begin{rem}
	 From now, by Remark \ref{rem:homogeneous2},  we define a \textbf{rational map} of irreducible projective varieties $ \Phi : X \subset \mathbb{P}_K^m \dashrightarrow Y \subset \mathbb{P}_K^n$ by given one representative in the class induced by homogeneous polynomials of the same degree  in $R = K[t_0,\dots,t_m]/ I(X)$, and we write
	$$ \Phi : X \dashrightarrow Y, \, t \mapsto (\overline{f_0}(t): \cdots : \overline{f_n}(t)). $$
	This is well defined because $\overline{f_i}(t)$ for  $0\leq i \leq n$  is well defined for all $t\in X$. We denote the representative of the rational by the tuple $(\overline{f_0} , \dots , \overline{f_n}) \in R^{n+1}.$
		\end{rem}

  \begin{defn}[Graph and image of rational map] \, \\
	Let $X=V(I) \subset \mathbb{P}_K^n$ and $Y \subset \mathbb{P}_K^m$ be irreducible projectives varieties. Let $\Phi : X \dashrightarrow Y  $ a rational map. We define the \textbf{graph} of $\Phi$ to be the subset 
		$$ \Gamma(\Phi) = \left\{(t,x),\, t \in D(\Phi) \text{ and } x=\Phi(t) \right\} \subset \mathbb{P}_K^m \times  \mathbb{P}_K^n,$$
		and the \textbf{image} of $\Phi$ to be the closure in $Y$ of the set-theoretical image of the associated morphism $D(\Phi) \rightarrow Y$ in Proposition \ref{rem:domain}. 
\end{defn}

\begin{exa}
    Take $K = \mathbb{C}$ and consider the rational map defined by 
 $$\Phi : \mathbb{P}_K^1 \dashrightarrow \mathbb{P}_K^2 , \, t \mapsto (t_0^2+t_1^2:t_0^2-t_1^2:2 t_0t_1).$$
Clearly, the vanishing locus $V(t_0^2+t_1^2,t_0^2-t_1^2,2 t_0t_1) = \emptyset \subset \mathbb{P}_K^1$, so $D(\Phi) = \mathbb{P}_K^1$ and $\Phi$ is a morphism.\newline
 We will see later that 
 $$\text{im}(\Phi) = V (x_1^2+x_2^2-x_0^2) \subset \mathbb{P}_K^2.$$
\end{exa}

\begin{exa}[Veronese map]
    Consider the rational rational map defined by
 $$\begin{array}{lcll}
  \Phi : & \mathbb{P}_\mathbb{C}^2 & \longrightarrow & \mathbb{P}_\mathbb{C}^5\\
  & (t_0:t_1:t_2) & \longmapsto & (t_0^2:t_1^2:t_2^2: 2 t_0 t_1:2 t_0 t_2:2 t_1 t_2).
 \end{array}
$$
Clearly, the rational map $\Phi$ is a morphism. That is the domain $D(\Phi) = \mathbb{P}^2_\mathbb{C}$.
The image of $\Phi$ is called the Veronese surface $S \subset \mathbb{P}^5_\mathbb{C}.$
\end{exa}

\begin{rem}
    We note that even rational map behaves similar to an ordinary map, the composition of rational maps not always defined. Let $\Phi : X \dashrightarrow Y$ and $\Psi : Y \dashrightarrow Z$ two rational maps of irreducible projective varieties. It may occur that  the image of $\Phi$ lies out of  any dense open subset of $Y$, where $\Psi$ is defined. In this case, the composition $\Psi \circ \Phi $ is not defined even as a rational map. The composition is defined as an equivalence class of morphisms $\Psi_{\restriction_V} \circ \Phi_{\restriction_U}$ over the dense open subset  $\Phi^{-1}_{\restriction U}(V)$, where $U \subset X$ and $V \subset Y$ are dense open subsets such that the restriction $\Phi_{\restriction_U}$ (respectively $\Psi_{\restriction_V})$ is a morphism in the equivalent class of $\Phi$ (respectively $\Psi$). This identification makes sense if $\Phi^{-1}_{\restriction_U}(V) \neq \emptyset$ exists. It is the case if    the 
     rational map $\Phi$ is \textbf{dominant}, that is  $\overline{\operatorname{im}(\Phi)} = Y$,. 
\end{rem}

In birational geometry, rational maps are extensively used to classify projective varieties by birational equivalence.

\begin{defn}
Let  $\Phi : X \dashrightarrow Y$ be a dominant rational map between irreducible  projective varieties. The rational map $\Psi$ is called \textbf{birational} if there exists a rational maps $\Psi : Y \dashrightarrow X $ such that $\Phi \circ \Psi = \operatorname{Id}_Y$ and $\Psi \circ \Phi = \operatorname{Id}_X$ as rational maps. We then say $X$ and $Y$ are birational.
\end{defn}
Examples of rational maps that are birational are given later.
Before describing the graph of a rational map explicitly by saturation, we first dive into elimination theory. Quotients and saturations play an important role in elimination theory. We will recall them.
\begin{defn}[Quotients and saturations of ideals]
 Let $I, J \subset K[t_1,\dots,t_n] $ be ideals.
 \begin{itemize}
  \item[(i)] The \textbf{ quotient} of $I$ by $J$ is the ideal 
  $$I : J = \{f \in K[t_1,\dots , t_n] \, | \, f g \in I, \, \, \forall g \in J \}.$$
  \item[(ii)] The \textbf{saturation} of $I$ by $J$  is the ideal 
  $$I : J^{\infty} = \{f \in K[t_1,\dots , t_n] \, | \, \exists N \in \mathbb{N} :  f g^N \in I, \, \, \forall g \in J \} .$$
 \end{itemize}
\end{defn}

The two definitions are related by the following proposition.
\begin{pro}\cite{David2007}
 If $I$, $J \subset K[t_1,\dots, t_n]$ are ideals. Then
 \begin{itemize}
  \item[(i)] $I : J^{\infty} = I:J^N$ for all sufficiently large $N$.
  \item[(ii)] $\sqrt{I:J^\infty} = \sqrt{I}:J$.
 \end{itemize}
\label{pro:iteration}
\end{pro}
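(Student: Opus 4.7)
The plan is to derive both parts from the Noetherianity of $K[t_1,\ldots,t_n]$ (Hilbert Basis Theorem). First, I would reconcile the definition given in the excerpt with the more standard $I:J^{\infty}=\bigcup_{N\geq 1}(I:J^N)$, where $I:J^N=\{f\mid fJ^N\subseteq I\}$. One direction is immediate, since $g^N\in J^N$ for $g\in J$. For the other, use that $J=(g_1,\ldots,g_k)$ is finitely generated: by a pigeonhole argument, every generating monomial $g_1^{a_1}\cdots g_k^{a_k}$ of $J^{k(N-1)+1}$ satisfies $a_i\geq N$ for some $i$, and so is divisible by some $g_i^N$.

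For (i), I would observe the trivial inclusion $I:J^N\subseteq I:J^{N+1}$, giving an ascending chain of ideals. Since $K[t_1,\ldots,t_n]$ is Noetherian, the chain stabilizes at some $N_0$, and therefore $I:J^{\infty}=\bigcup_{N\geq 1}(I:J^N)=I:J^{N_0}$, which is the claim for all $N\geq N_0$.

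For (ii), I would prove the two inclusions separately. For $\sqrt{I:J^{\infty}}\subseteq \sqrt{I}:J$, take $f\in\sqrt{I:J^{\infty}}$, so $f^m J^N\subseteq I$ for some $m,N$. For any $g\in J$, set $k=\max(m,N)$; then
\[
(fg)^k=f^k g^k=f^{k-m}g^{k-N}\cdot(f^m g^N)\in I,
\]
so $fg\in\sqrt{I}$, hence $f\in\sqrt{I}:J$. For the reverse inclusion $\sqrt{I}:J\subseteq\sqrt{I:J^{\infty}}$, pick $f\in\sqrt{I}:J$ and generators $g_1,\ldots,g_k$ of $J$. Each $fg_i\in\sqrt{I}$, so $(fg_i)^{N_i}\in I$; taking $N=\max_i N_i$ gives $f^N g_i^N\in I$ for all $i$. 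Applying the same pigeonhole argument as above, every generator of $J^{k(N-1)+1}$ is divisible by some $g_i^N$, whence $f^N J^{k(N-1)+1}\subseteq I$, so $f^N\in I:J^{\infty}$ and $f\in\sqrt{I:J^{\infty}}$.

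The only real obstacle is bookkeeping of exponents in the pigeonhole step used twice (once to identify the two definitions of $I:J^{\infty}$, once in the reverse inclusion of (ii)); once this is in place, both statements follow formally from Noetherianity and the ideal calculus of radicals. The finite generation of $J$, inherited from Hilbert's basis theorem, is the single essential ingredient that prevents a transfinite construction.
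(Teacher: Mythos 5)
Your proof is correct: the ascending chain $I:J\subseteq I:J^2\subseteq\cdots$ plus Noetherianity for (i), and the two inclusions for (ii) with the pigeonhole bound that any monomial $g_1^{a_1}\cdots g_k^{a_k}$ of total degree $k(N-1)+1$ has some $a_i\geq N$, is exactly the standard argument, and your preliminary reconciliation of the paper's quantifier-order definition of $I:J^\infty$ with $\bigcup_{N}(I:J^N)$ is a worthwhile point handled correctly. Note that the paper itself gives no proof of this proposition and only cites \cite{David2007}, so there is no in-paper argument to compare against; your self-contained treatment coincides with the textbook approach.
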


\begin{rem}
\label{rem:iteration}
 The Proposition \ref{pro:iteration} leads to an algorithm  computing saturations by iterating the ideal quotients until it stabilizes since we have the equality
 $$ (I:J):L=I:(J\cdot L)$$
 for all ideals $I,J$ and $L$ in $K[t_1,\cdots, t_n]$.
\end{rem}
A geometric interpretation of ideal quotients and saturations are  given by the following propositions.
\begin{pro}\cite{David2007}
 If $I,J \subset K[t_1,\dots,t_n]$ are ideals, then
$\overline{V(I)\backslash V(J)} \subset V(I:J)$
where $V$ is the affine vanishing locus.\label{pro:quotient}
\end{pro}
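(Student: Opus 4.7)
The plan is to first establish the set-theoretic containment $V(I)\setminus V(J)\subset V(I:J)$ pointwise, and then deduce the stated containment by invoking the fact that $V(I:J)$ is closed in the Zariski topology.

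First I would fix a point $p\in V(I)\setminus V(J)$ and an arbitrary $f\in I:J$, and show $f(p)=0$. By hypothesis on $p$, every element of $I$ vanishes at $p$, while there exists some $g\in J$ with $g(p)\neq 0$. By the definition of the ideal quotient, the product $fg$ lies in $I$, so $(fg)(p)=f(p)g(p)=0$. Since $g(p)\neq 0$, this forces $f(p)=0$. As $f\in I:J$ was arbitrary, $p$ lies in the common vanishing locus of $I:J$, i.e.\ $p\in V(I:J)$.

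Having established $V(I)\setminus V(J)\subset V(I:J)$, the second step is immediate: $V(I:J)$ is a Zariski-closed subset of $\mathbb{A}_K^n$, so it contains the closure of any subset it contains. Therefore $\overline{V(I)\setminus V(J)}\subset V(I:J)$, which is the claim.

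No real obstacle arises here; the only subtlety worth flagging is that the containment is in general strict (for instance when $V(I)\setminus V(J)$ is empty but $I:J$ is a proper ideal), which is why the statement is phrased as an inclusion rather than an equality. A sharper equality can be recovered after passing to radicals via Proposition \ref{pro:iteration}(ii), but that is not needed for the assertion at hand.
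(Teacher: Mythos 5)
Your proof is correct: the pointwise argument (for $p\in V(I)\setminus V(J)$ pick $g\in J$ with $g(p)\neq 0$, use $fg\in I$ to force $f(p)=0$) followed by the observation that the Zariski-closed set $V(I:J)$ contains the closure of any subset it contains is exactly the standard argument; the paper itself gives no proof of this proposition, only a citation, so there is nothing different to compare against. The only quibble is your parenthetical example of strictness, which needs $V(I:J)\neq\emptyset$ (e.g.\ $K$ algebraically closed and $I:J$ proper) to actually exhibit strict containment, but this side remark does not affect the proof of the stated inclusion.
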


\begin{exa}\cite{David2007}
\label{exa:quotient}
 Let $I=\langle x^2 (y-1) \rangle$ and $J= \langle x\rangle$ ideals in $\mathbb{C}[x,y]$. A simple geometric interpretation shows that $\overline{V(I)\backslash V(J)} = V(y-1)$. However, the ideal quotient is 
 \begin{align*}
  I:J & = \left\{f \in  \mathbb{C}[x,y] \,; \, fg \in I \, \, \forall g \in J  \right\}\\
  & = \{ f \in \mathbb{C}[x,y] ; fx\in I \}\\
  & = \{ f \in \mathbb{C}[x,y] \,\,; \, \, \exists A \in   \mathbb{C}[x,y], \, fx =A x^2(y-1) \}\\
  & = \{ f \in \mathbb{C}[x,y] \,\,;\,\, \exists A \in \mathbb{C}[x,y], \, f = A x(y-1)  \}\\
  & = \langle x(y-1) \rangle.
 \end{align*}
\end{exa}
This example shows that the vanishing locus $V(I:J)$ does not match with 
$\overline{V(I)\backslash V(J)}$. In order to obtain an equality, we need  
ideal other than the quotient $I:J$.
\begin{pro}\cite{David2007}\\
 If $I,J$ are ideals in $K[t_1,\dots, t_n]$ and $K$ is algebraically closed fields, then $$V(I:J^\infty) = \overline{V(I)\backslash V(J)}.$$
\end{pro}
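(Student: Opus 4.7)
My plan is to establish the two inclusions $\overline{V(I)\setminus V(J)} \subseteq V(I:J^\infty)$ and $V(I:J^\infty) \subseteq \overline{V(I)\setminus V(J)}$ separately. The forward inclusion only needs the definitions; the reverse direction is where the hypothesis that $K$ is algebraically closed enters essentially, via Hilbert's Nullstellensatz, and where Noetherianity of $K[t_1,\dots,t_n]$ is used.

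For the forward inclusion I would argue pointwise. Given $x \in V(I)\setminus V(J)$, choose $g \in J$ with $g(x)\neq 0$. For any $f \in I:J^\infty$, by definition there exists $N\in\mathbb{N}$ with $fg^N \in I$, and evaluating at $x$ forces $f(x)g(x)^N = 0$, hence $f(x)=0$. This shows $V(I)\setminus V(J) \subseteq V(I:J^\infty)$, and since the right-hand side is Zariski closed, taking closures preserves the inclusion. Note that this direction already follows from Proposition \ref{pro:quotient} combined with Proposition \ref{pro:iteration}\,(i), but the direct argument is shorter.

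For the reverse inclusion I would proceed by contrapositive. If $x \notin \overline{V(I)\setminus V(J)}$, the Nullstellensatz produces $f \in K[t_1,\dots,t_n]$ vanishing on $\overline{V(I)\setminus V(J)}$ with $f(x)\neq 0$. By Hilbert's basis theorem we may write $J = \langle g_1,\dots,g_s\rangle$, and each product $fg_i$ vanishes on all of $V(I)$: it kills $V(I)\setminus V(J)$ because $f$ does, and it kills $V(I)\cap V(J) \subseteq V(g_i)$ because $g_i$ does. Applying the Nullstellensatz a second time, $(fg_i)^{N_i} = f^{N_i}g_i^{N_i} \in I$; taking $N := \max_i N_i$ yields $f^N g_i^N \in I$ for every $i$. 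A pigeonhole argument on degree-$sN$ monomials in the generators shows that every monomial $g_{i_1}\cdots g_{i_{sN}}$ contains some $g_j^N$ as a factor, so $f^N\cdot J^{sN} \subseteq I$, and in particular $f^N g^{sN} \in I$ for every $g\in J$. Hence $f^N \in I:J^\infty$, and since $f^N(x) = f(x)^N \neq 0$, we conclude $x \notin V(I:J^\infty)$.

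The main obstacle is the pigeonhole step that upgrades the information $f^Ng_i^N \in I$, holding only at each generator, into a uniform relation $f^N g^M \in I$ valid for every $g \in J$; this is the technical heart of the argument and is exactly where Noetherianity is used. The two invocations of the Nullstellensatz — first to produce the separating polynomial $f$, and second to convert vanishing of $fg_i$ on $V(I)$ into membership of a power in $I$ — are both essential, and together they pin down the role of the hypothesis that $K$ is algebraically closed.
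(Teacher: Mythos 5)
Your proof is correct. Note that the paper itself gives no argument for this proposition; it simply cites the reference [David2007], and what you have written is essentially the standard textbook proof from that source: the easy inclusion by direct evaluation, and the reverse inclusion via the strong Nullstellensatz applied to the products $fg_i$, followed by the uniformization step $f^N J^{sN}\subseteq I$. That uniformization is genuinely needed here, since the paper's definition of $I:J^\infty$ asks for a single exponent working for all $g\in J$ simultaneously, and your pigeonhole argument supplies exactly that. Two small quibbles of attribution rather than substance: the separating polynomial $f$ with $f(x)\neq 0$ vanishing on $\overline{V(I)\setminus V(J)}$ comes straight from the definition of the Zariski closure (no Nullstellensatz, and no algebraic closedness, is needed for that step — the hypothesis enters only when you pass from "$fg_i$ vanishes on $V(I)$" to $fg_i\in\sqrt{I}$); and what the pigeonhole step really uses is the finite generation of $J$ (Hilbert basis theorem), the combinatorial upgrade itself being independent of Noetherianity. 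An alternative route, available from the paper's Proposition on quotients and saturations, would be to use $\sqrt{I:J^\infty}=\sqrt{I}:J$ together with the Nullstellensatz, but your self-contained argument is just as good.
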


The following proposition  and theorem lead us to an  algorithm for computing 
quotients and saturations.
\begin{pro} \label{pro:sum}\cite{David2007}
 Let $I$ and $J_1, \dots J_r$ be ideals in $K[t_1,\dots, t_n]$ then 
 \begin{itemize}
  \item[\textup{(i)}] $I:(\sum_{i=1}^{r} J_i) = \bigcap_{i=1}^{r} (I:J_i)$,
  \item[\textup{(ii)}] $I:{(\sum
  _{i=1}^{r} J_i)}^\infty =\bigcap_{i=1}^{r} {(I:J_i^\infty)} .$
 \end{itemize}
\end{pro}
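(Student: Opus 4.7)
The plan is to settle (i) directly from the set-theoretic definition of the ideal quotient, and then to derive (ii) from (i) by combining a finite-stabilization argument in the Noetherian ring $K[t_1,\dots,t_n]$ with a multinomial/pigeonhole bound that sandwiches the two sides.

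For (i), I would argue that $f \in I : \sum_{i} J_i$ iff $f g \in I$ for every $g \in \sum_i J_i$. Writing a general element of $\sum_i J_i$ as $g = g_1 + \dots + g_r$ with $g_i \in J_i$, and taking in turn all but one summand to be zero, the condition $fg \in I$ for all such sums is equivalent to $f g_i \in I$ for every $i$ and every $g_i \in J_i$, i.e. $f \in \bigcap_i(I:J_i)$. The converse is immediate since $I$ is closed under addition: if $f \in \bigcap_i(I:J_i)$, then $f(g_1+\dots+g_r) = \sum_i fg_i \in I$.

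For (ii), I would first record the equivalent description $I : J^\infty = \bigcup_N (I : J^N)$, where $J^N$ denotes the ideal power; this follows from the fact that $f J^N \subseteq I$ implies $f g^N \in I$ for every $g \in J$, while conversely if $f g^N \in I$ for all $g \in J$ and $J = \langle g_1,\dots,g_k\rangle$, then by expanding a product of $kN$ generators one sees $f J^{kN} \subseteq I$. Since each chain $\{I:J_i^N\}_N$ is ascending in the Noetherian ring $K[t_1,\dots,t_n]$, by Proposition~\ref{pro:iteration}(i) I can choose a single integer $N$ with $I:J_i^\infty = I:J_i^N$ for all $i$ simultaneously. Applying (i) to the ideals $J_i^N$ then yields
\[
\bigcap_i (I : J_i^\infty) \;=\; \bigcap_i (I : J_i^N) \;=\; I : \Bigl(\sum_i J_i^N\Bigr).
\]

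It then remains to show $I:\sum_i J_i^N = I:(\sum_i J_i)^\infty$, which I would do by a two-sided containment. The trivial inclusion $\sum_i J_i^N \subseteq (\sum_i J_i)^N$ gives the inclusion $I:(\sum_i J_i)^\infty \subseteq I:\sum_i J_i^N$. In the other direction, a pigeonhole argument shows $(\sum_i J_i)^{r(N-1)+1} \subseteq \sum_i J_i^N$: any generating product of $r(N-1)+1$ elements, each taken from some $J_i$, must contain at least $N$ factors from a common $J_i$, and those factors alone produce an element of $J_i^N$. Combined with the equivalent description of $I:(-)^\infty$ above, this yields $I:\sum_i J_i^N \subseteq I:(\sum_i J_i)^\infty$, completing (ii). The main technical step is this pigeonhole-style containment; all other steps are essentially formal once the Noetherian stabilization is in hand.
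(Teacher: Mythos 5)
The paper gives no proof of this proposition at all --- it is simply quoted from \cite{David2007} --- so there is nothing internal to compare against; your argument has to stand on its own, and it essentially does. Part (i) is correct and is the standard argument. In part (ii), your reduction $I:J^\infty=\bigcup_N(I:J^N)$, the simultaneous stabilization via Proposition~\ref{pro:iteration}(i), the application of (i) to the powers $J_i^N$, and the pigeonhole containment $(\sum_i J_i)^{r(N-1)+1}\subseteq\sum_i J_i^N$ are all sound, and the last of these is indeed the only step with real content. The one sentence that is not right as stated is the claim that the inclusion $\sum_i J_i^N\subseteq(\sum_i J_i)^N$ ``gives'' $I:(\sum_i J_i)^\infty\subseteq I:\sum_i J_i^N$: that containment only yields $I:(\sum_i J_i)^N\subseteq I:\sum_i J_i^N$, and the saturation $I:(\sum_i J_i)^\infty$ contains, rather than is contained in, the $N$-th quotient. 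The inclusion you want is nevertheless true and cheap: either enlarge $N$ so that it also stabilizes $I:(\sum_i J_i)^\infty=I:(\sum_i J_i)^N$ (harmless, since your earlier identities persist for larger $N$), or simply note that $J_i\subseteq\sum_i J_i$ gives $I:(\sum_i J_i)^\infty\subseteq I:J_i^\infty=I:J_i^N$ for each $i$, whence the claim. With that one-line repair your proof is complete; as an aside, the same observation shows the inclusion $I:(\sum_i J_i)^\infty\subseteq\bigcap_i(I:J_i^\infty)$ is trivial from the definitions, so the pigeonhole bound is only really needed for the reverse inclusion.
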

\begin{thm}\cite{David2007}
 Let $I \subset K[t_1,\dots, t_n]$ an ideal  and $g \neq 0 \in K[t_1,\dots,t_n]$. Then
 \begin{itemize}
  \item[\textup{(i)}] If $I\cap \langle g \rangle = \langle g f_1,gf_2,\dots , g f_s \rangle$ where $f_i \in K[t_1,\dots,t_n]$, we have $I:\langle g \rangle = \langle f_1,\dots, f_s \rangle.$
  \item[\textup{(ii)}] If $I = \langle h_1, \dots, h_s \rangle$ and $\overline{I}= \langle h_1,\dots, h_s,1-yg \rangle $, we obtain $I:{\langle g\rangle}^\infty = \overline{I} \cap K[t_1,\dots, t_n]$. 
 \end{itemize} \label{thm:saturation}
\end{thm}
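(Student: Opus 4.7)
The plan is to handle the two parts separately, using elementary ideal-theoretic arguments for (i) and the classical Rabinowitsch trick for (ii). Both statements are standard results in computational algebra, so the main work is bookkeeping rather than insight.

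For part (i), I would prove the two inclusions of $I : \langle g \rangle = \langle f_1, \ldots, f_s \rangle$ directly. The inclusion $\langle f_1, \ldots, f_s \rangle \subseteq I : \langle g \rangle$ is immediate, since each $gf_i \in I$ forces $f_i \in I : \langle g \rangle$ by definition. For the reverse inclusion I would take $f \in I:\langle g\rangle$, observe that $fg$ lies in both $I$ and $\langle g \rangle$, and hence in the intersection $\langle gf_1, \ldots, gf_s\rangle$. Writing $fg = \sum_i a_i gf_i = g\sum_i a_i f_i$ and cancelling $g$ (which is legal since $K[t_1, \ldots, t_n]$ is an integral domain and $g \neq 0$) gives $f \in \langle f_1, \ldots, f_s \rangle$. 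The computationally useful point is that the intersection $I \cap \langle g\rangle$ can be obtained by a single elimination Gröbner basis over $K[t, t_1, \ldots, t_n]$ from $\langle t \cdot I, (1-t)g\rangle$, turning the formula into an effective algorithm.

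For part (ii), I would use the standard Rabinowitsch embedding into $R[y] = K[t_1, \ldots, t_n, y]$. For the inclusion $I:\langle g\rangle^\infty \subseteq \overline{I} \cap K[t_1, \ldots, t_n]$, take $f$ with $fg^N \in I$ and use the algebraic identity
$$f = fg^N y^N + f\bigl(1 - g^N y^N\bigr) = fg^N y^N + f(1 - gy)\bigl(1 + gy + \cdots + g^{N-1}y^{N-1}\bigr)$$
to exhibit $f$ as a member of $\overline{I}$; since $f$ is free of $y$, it lies in $\overline{I} \cap K[t_1, \ldots, t_n]$. For the reverse inclusion, I would take $f \in \overline{I} \cap R$, write $f = \sum_i a_i h_i + b(1-gy)$ with $a_i, b \in R[y]$, and then evaluate $y \mapsto 1/g$ in the localization $R_g$ to kill the $(1-gy)$ term. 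Clearing denominators by multiplying through by a sufficiently high power $g^N$ moves the equation back into $R$ and yields $fg^N \in I$, so $f \in I : \langle g\rangle^\infty$.

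The main obstacle, modest as it is, is the verification in part (ii) that the substitution $y \mapsto 1/g$ defines a well-behaved $R$-algebra map into $R_g$ and that the subsequent multiplication by $g^N$ really produces an element of $I$ rather than of some larger ideal of $R[y]$. This is routine but deserves care. The computational payoff is that part (ii) converts saturation into elimination of the auxiliary variable $y$, which Gröbner basis techniques with an elimination ordering handle directly, complementing the iterated-quotient viewpoint of Remark \ref{rem:iteration}.
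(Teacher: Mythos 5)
Your proof is correct; the paper itself gives no argument for this statement but simply cites \cite{David2007}, and your reasoning is precisely the standard one found there: for (i) the cancellation of $g$ in the integral domain $K[t_1,\dots,t_n]$ applied to $fg\in I\cap\langle g\rangle$, and for (ii) the Rabinowitsch-style argument with the auxiliary variable $y$, the factorization $1-g^Ny^N=(1-gy)(1+gy+\cdots+g^{N-1}y^{N-1})$, and the substitution $y\mapsto 1/g$ in the localization $R_g$ followed by clearing denominators (legitimate since $R\to R_g$ is injective for $g\neq 0$ in a domain). So the proposal matches the intended proof and needs no changes.
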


\begin{rem}
 Let $I , J \subset K[t_1,\dots, t_n]$ ideals such that $J = \langle g_1,\dots,g_s\rangle $.
 \begin{itemize}
  \item[\textup{(i)}] In order to compute $I:J$, Proposition \ref{pro:sum} implies $I:J = 
\bigcap_{i=1}^s (I:\langle g_i \rangle)$ and we can get  $I:\langle g_i \rangle $ via  Theorem \ref{thm:saturation} (i). The command 
\texttt{quotient(I:J)}  in Singular computes $I:J.$
  \item[\textup{(ii)}] Using the same argument as  in (i),  Theorem \ref{thm:saturation} (ii) yields an algorithm for computing saturations. Note that Singular uses the iterative  quotient algorithm mentioned in Remark \ref{rem:iteration} through the command \texttt{sat(I,J)} in the library \texttt{elim.lib} and gives as output the ideal $I:J^\infty$ and the smallest  $N \in \mathbb{N}$ such that $I:J^\infty =I:J^N $.
 \end{itemize}\label{algo:saturation}
\end{rem}
\begin{exa}
 Consider Example \ref{exa:quotient}, Singular gives the outputs $I:J = \langle x^2 y -x^2 \rangle$ and the saturation $I:J^\infty = \langle y-1\rangle$ with stability index $N=2$, that is $I:J^\infty = I:J^2.$
\end{exa}

In view of describing the image of rational map between projective variety, we introduce an additional tool in the  algebra-geometry correspondence. This is elimination of ideal which corresponds to projection in geometry.\newline
First we recall how it appears in the affine case in the following theorem. 
\begin{thm}\cite{David2007}
 Let $I\subset K[t_1,\dots,t_n]$ ideal with $K$ an algebraically closed field. Set $A = V(I) \subset K^n$ the vanishing locus of $I$ and let $0\leq k <n$. Consider 
 $$\pi_k:K^n \longrightarrow K^{n-k}, (a_1,\dots,a_n) \longmapsto (a_{k+1}, \dots, a_n)$$
 be the projection onto the last $k$ components. Then 
 $$ \overline{\pi_k(A)} = V(I_k)$$
 where $I_k = I \cap K[t_{k+1},\dots,t_n]$ is the elimination ideal of $I$.
\end{thm}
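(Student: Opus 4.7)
The plan is to prove the two inclusions $\overline{\pi_k(A)}\subseteq V(I_k)$ and $V(I_k)\subseteq\overline{\pi_k(A)}$ separately. The first is essentially formal, while the second requires Hilbert's Nullstellensatz and is the only place where the hypothesis that $K$ be algebraically closed enters the argument.

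For the easy inclusion, I would take any $a\in A$ and any $f\in I_k = I\cap K[t_{k+1},\dots,t_n]$. Since $f\in I$ we have $f(a)=0$; since $f$ involves only the last $n-k$ variables, the value $f(a)$ depends only on $\pi_k(a)$, so $f(\pi_k(a))=0$. Thus $\pi_k(A)\subseteq V(I_k)$, and because $V(I_k)$ is Zariski-closed in $K^{n-k}$, taking closure gives $\overline{\pi_k(A)}\subseteq V(I_k)$.

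For the reverse inclusion, set $W=\overline{\pi_k(A)}\subseteq K^{n-k}$ and let $I(W)\subseteq K[t_{k+1},\dots,t_n]$ be its vanishing ideal. By Hilbert's Nullstellensatz (which needs $K$ algebraically closed) we have $W=V(I(W))$, so it suffices to show $I(W)=\sqrt{I_k}$, because then $V(I_k)=V(\sqrt{I_k})=V(I(W))=W$. The inclusion $\sqrt{I_k}\subseteq I(W)$ follows from the first part together with the fact that $I(W)$ is radical. For the converse, suppose $f\in I(W)$; then $f$ vanishes on $\pi_k(A)$, and viewing $f$ as an element of $K[t_1,\dots,t_n]$ that is independent of $t_1,\dots,t_k$, we see $f$ vanishes on $A=V(I)$. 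By the Nullstellensatz applied to $I$, some power $f^m$ lies in $I$; but $f^m\in K[t_{k+1},\dots,t_n]$, hence $f^m\in I\cap K[t_{k+1},\dots,t_n]=I_k$, so $f\in\sqrt{I_k}$, completing the argument.

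The only substantive step is the reverse containment, and within it the translation ``$f$ vanishes on $\pi_k(A)$ implies $f$ vanishes on $A$'' combined with the Nullstellensatz. Everything else is bookkeeping about the elimination ideal and the definition of closure in the Zariski topology.
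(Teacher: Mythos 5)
Your proof is correct: it is the standard argument, with the easy containment formal and the reverse containment via the strong Nullstellensatz applied to $I$ ($f$ vanishing on $\pi_k(A)$ forces $f^m\in I$, hence $f^m\in I_k$), which is exactly how this Closure Theorem is proved in the cited reference \cite{David2007}; the paper itself states the result without proof. One cosmetic remark: the identity $W=V(I(W))$ for a Zariski closure holds over any field and does not require the Nullstellensatz — algebraic closedness enters only in the step $I(V(I))=\sqrt{I}$, as you correctly use later.
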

In the projective case, things are more complicated as the following example will show.
\begin{exa}
\label{exa:projection}
Let $I = \langle x+yz,x+xz \rangle \subset \mathbb{C}[x,y,z]$ an ideal and $X = V(I) 
\subset \mathbb{P}^1_\mathbb{C} \times \mathbb{C}$. This makes sense because $I$ 
is homogeneous in the variables $x$ and $y$. Then, it is straightforward to show 
that $$X = \{((0:1),0), ((1:1),-1),((1:1),-1)  \} \subset 
\mathbb{P}^1_\mathbb{C} \times \mathbb{C}.$$ 
So the image of $X$ under the projection $$ \pi : \mathbb{P}^1_\mathbb{C} \times 
\mathbb{C} \longrightarrow K, \, \, ((a_1:a_2),a_3) \longmapsto a_3$$
is $\pi(X) = \{0,-1\}$. If we try to use the affine elimination, we get 
$I \cap \mathbb{C}[z] = \{0\}$  which means that the image is all $\mathbb{C}$. Indeed, in affine point of view $V(I) \subset \mathbb{C}^2 \times \mathbb{C}$ contains the points $(0,0,t)$ for every $t \in \mathbb{C}$ but the point $((0:0),t)$ does not exist in $\mathbb{P}^1_\mathbb{C} \times \mathbb{C}.$\newline
So the affine elimination does not capture the fact that we are in projective space. However, we can extend it by removing points which are not defined in the projective space.
\end{exa}

\begin{thm}[Main theorem of elimination theory]\cite{greuel2008singular}
    Let $Y$ be any quasi-projective variety and $$\pi : \mathbb{P}_K^n \times Y \longrightarrow Y$$ the projection on the second factor. Then $\pi$ is a closed map.
\end{thm}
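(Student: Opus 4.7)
The plan is to prove the theorem by reducing to the case where $Y$ is affine and then carrying out a direct elimination argument in coordinates. Since closedness of a map can be tested locally on the target, I would first cover $Y$ by affine open subsets and observe that it suffices to prove $\pi(Z)\cap U$ is closed in $U$ for every affine open $U\subset Y$. Replacing $Y$ by such a $U$, I may assume $Y\subset \mathbb{A}_K^m$ is quasi-affine and work with coordinates $y_1,\ldots,y_m$ on the target and homogeneous coordinates $x_0,\ldots,x_n$ on $\mathbb{P}_K^n$.

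Next I would represent a closed subset $Z\subset \mathbb{P}_K^n\times Y$ as the common vanishing locus of finitely many polynomials $f_1,\ldots,f_s\in K[x_0,\ldots,x_n,y_1,\ldots,y_m]$ that are homogeneous in the $x$-variables (of possibly different degrees $d_1,\ldots,d_s$). The key reformulation is then pointwise: a point $y\in Y$ fails to lie in $\pi(Z)$ precisely when the system $f_1(\cdot,y)=\cdots=f_s(\cdot,y)=0$ has no solution in $\mathbb{P}_K^n$, which by the projective Nullstellensatz is equivalent to the specialized ideal $J_y:=\langle f_1(\cdot,y),\ldots,f_s(\cdot,y)\rangle\subset K[x_0,\ldots,x_n]$ containing some power $\mathfrak{m}^d$ of the irrelevant ideal $\mathfrak{m}=\langle x_0,\ldots,x_n\rangle$. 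Therefore
\[
Y\setminus\pi(Z)\;=\;\bigcup_{d\ge 1} U_d,\qquad U_d=\{y\in Y\mid \mathfrak{m}^d\subset J_y\}.
\]

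The remaining task, and the main technical step, is to show that each $U_d$ is open in $Y$. For fixed $d$, consider the multiplication maps $K[x]_{d-d_i}\to K[x]_d$, $g\mapsto g\cdot f_i(\cdot,y)$, whose direct sum yields a $K$-linear map
\[
\Phi_{d,y}\colon \bigoplus_{i=1}^{s} K[x]_{d-d_i}\longrightarrow K[x]_d.
\]
The image of $\Phi_{d,y}$ is exactly $J_y\cap K[x]_d$, so $y\in U_d$ iff $\Phi_{d,y}$ is surjective. After choosing monomial bases on both sides, $\Phi_{d,y}$ is represented by a matrix whose entries are polynomials in $y_1,\ldots,y_m$, and surjectivity is the condition that this matrix has maximal rank $\binom{n+d}{n}$. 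This is an open condition cut out by the non-vanishing of maximal minors, hence $U_d$ is open in $Y$.

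Combining the three steps, $Y\setminus\pi(Z)=\bigcup_d U_d$ is open and thus $\pi(Z)$ is closed. The step I expect to require the most care is the passage from the ideal-theoretic condition $\mathfrak{m}^d\subset J_y$ to the rank condition on $\Phi_{d,y}$ and the verification that the resulting matrix entries are genuinely polynomials in $y$ (so that the minors truly give an open condition in the Zariski topology of $Y$); everything else is bookkeeping around the projective Nullstellensatz and the reduction to the affine target.
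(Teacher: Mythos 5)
Your argument is correct, but note that the paper itself does not prove this statement at all: it simply cites \cite{greuel2008singular}, Theorem A.7.6, so there is no in-paper proof to compare against. What you give is the classical self-contained proof of properness of $\mathbb{P}^n_K$: localize on the target to reduce to (quasi-)affine $Y$, describe $Z$ by polynomials $f_1,\dots,f_s$ homogeneous in the $x$-variables, use the projective weak Nullstellensatz (valid here since the paper works over an algebraically closed field) to translate emptiness of the fibre into $\mathfrak{m}^d\subset J_y$ for some $d$, and show each $U_d$ is open because it is the surjectivity locus of the specialized multiplication map $\Phi_{d,y}$, cut out by non-vanishing of maximal minors of a matrix whose entries are the coefficient functions $c_{i,\alpha}(y)$ of $f_i(x,y)=\sum_\alpha c_{i,\alpha}(y)x^\alpha$, hence regular on $Y$. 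The only points worth making explicit are small: the fibre of $Z$ over $y$ is exactly $V(J_y)$, and $\mathfrak{m}^d\subset J_y$ iff $(J_y)_d=K[x]_d$, which uses that $J_y$ is a homogeneous ideal; also $K[x]_{d-d_i}=0$ when $d<d_i$ causes no harm. Compared with a proof via elimination orderings or the saturation $(I:\mathfrak{m}^\infty)$ (the route implicit in the paper's use of the projective elimination ideal and in the cited reference), your linear-algebra argument has the advantage of being elementary and of exhibiting the closed image as an intersection of determinantal conditions, though it is less directly tied to the Gr\"obner-basis computations the paper actually performs.
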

\begin{proof}
    See \cite{greuel2008singular} Theorem A.7.6 .
\end{proof}

\begin{defn}
 Let $I \in K[t_0,\dots,t_m,x_1,\dots,x_n]$ be an ideal generated by homogeneous polynomials in the variables $t_0,\dots, t_m$. The projective elimination ideal $I$ of  is the ideal
 $$ \hat{I} = (I:{\langle t_0,\dots,t_m \rangle}^\infty) \cap 
K[x_1,x_2,\dots,x_n].$$
\end{defn}
\begin{exa}
 In Example \ref{exa:projection}, we have $$\hat{I}= (I:{\langle x , y \rangle}^\infty) \cap \mathbb{C}[z] = \langle y(y+1)\rangle$$ and $\pi(A) = V(\hat{I})$. The following theorem justifies this observation.
\end{exa}
\begin{thm}\cite{David2007}
 Consider an algebraically closed field $K$. Let $I = \langle f_1,\dots, f_p \rangle$ be an ideal in $K[t_0,\dots,t_m,x_1,\dots,x_n]$ generated by homogeneous polynomials in the variables $t_0,\dots, t_m$. We define 
 $$ V(I) = \left\{(t,x) \in \mathbb{P}^m_K \times K^n  \,\, \Big| \,\, 
 \begin{array}{c}
 f(t,x) = 0 \\ \text{ for all } f \in I \text{ homogeneous in $t_i$}
 \end{array}
 \right\}$$ and 
 $$
 \begin{array}{lcll}
     \pi: & \mathbb{P}^m_K \times K^n & \longrightarrow  & K^n \\
     &((t_0:{\dots} : t_m),(x_1,\dots,x_n)) &  \longmapsto & (x_1,\dots,x_n).
 \end{array}
 $$
 Then, we have
$$ \pi(V(I)) = V(\hat{I}).$$ \label{thm:main}
\end{thm}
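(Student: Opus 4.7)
The plan is to prove both inclusions $\pi(V(I)) \subseteq V(\hat{I})$ and $V(\hat{I}) \subseteq \pi(V(I))$ separately; the main theorem of elimination theory stated above guarantees that $\pi(V(I))$ is closed in $K^n$, so equality of the two sets can be tested pointwise.

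The inclusion $\pi(V(I)) \subseteq V(\hat{I})$ I would dispatch by direct evaluation. Given $(t, x) \in V(I)$ and $f \in \hat{I}$, the definition of the saturation provides $N$ with $f \cdot t_i^N \in I$ for every $i = 0, \ldots, m$. Since $f$ involves only the $x$-variables, evaluating this relation at $(t, x)$ yields $f(x)\, t_i^N = 0$ for each $i$; as at least one $t_i$ is nonzero in $\mathbb{P}_K^m$, we conclude $f(x) = 0$.

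For the reverse inclusion I would argue by contraposition: given $x_0 \notin \pi(V(I))$, I would produce $f \in \hat{I}$ with $f(x_0) \neq 0$. The fiber of $V(I)$ over $x_0$ is empty in $\mathbb{P}_K^m$, so the specialized homogeneous ideal $I_{x_0} := \langle f_1(t, x_0), \ldots, f_p(t, x_0) \rangle \subseteq K[t_0, \ldots, t_m]$ has empty projective vanishing locus. The projective Nullstellensatz then provides $N$ with $\langle t_0, \ldots, t_m \rangle^N \subseteq I_{x_0}$. To lift this fiberwise collapse to a global element of $\hat{I}$, I would consider the finitely generated $K[x]$-module $M_N := K[t, x]_N / (I \cap K[t, x]_N)$, where $K[t, x]_N$ denotes the piece of $t$-degree exactly $N$ (which is $K[x]$-free on the finitely many monomials $t^{\alpha}$ with $|\alpha| = N$). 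The fiber $M_N / \mathfrak{m}_{x_0} M_N$ is isomorphic to $K[t]_N / (I_{x_0})_N$, which vanishes by the previous step; Nakayama's lemma then produces $f \in K[x]$ with $f(x_0) \neq 0$ annihilating $M_N$. Unwinding, $f \cdot t^{\alpha} \in I$ for every $|\alpha| = N$, so $f \in (I : \langle t_0, \ldots, t_m \rangle^N) \cap K[x] \subseteq \hat{I}$, contradicting $x_0 \in V(\hat{I})$.

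The main obstacle is the transition from the fiberwise Nullstellensatz conclusion to a single element of $\hat{I}$ witnessing $x_0 \notin V(\hat{I})$. This is the Nakayama step, and it requires correctly identifying the fiber of $M_N$ with $K[t]_N / (I_{x_0})_N$; this identification is automatic from the $K[x]$-freeness of $K[t, x]_N$, so once the graded module is set up the argument proceeds cleanly. An alternative route would be to invoke the closedness from the main theorem to write $\pi(V(I)) = V(J)$ and argue $\sqrt{J} = \sqrt{\hat{I}}$ directly, but the contrapositive via Nakayama gives the sharper conclusion without passing through radicals.
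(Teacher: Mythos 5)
Your argument is correct, but note that the paper does not actually prove this theorem: it is quoted from \cite{David2007}, where the standard proof runs through the affine extension theorem (resultant-style arguments on affine charts of $\mathbb{P}^m_K$). Your route is genuinely different and self-contained: the inclusion $\pi(V(I))\subseteq V(\hat I)$ by direct evaluation of $f\,t_i^N\in I$ (legitimate, since $f t_i^N$ is $t$-homogeneous and some $t_i\neq 0$), and the reverse inclusion by specializing to the fiber, applying the projective Nullstellensatz to get $\langle t_0,\dots,t_m\rangle^N\subseteq I_{x_0}$, and then globalizing with Nakayama applied to the finitely generated $K[x]$-module $M_N=K[t,x]_N/(I\cap K[t,x]_N)$. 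This is essentially the coherent-module proof that $\mathbb{P}^m$ is proper, and it buys a cleaner conceptual argument at the cost of slightly heavier algebra; the textbook proof is more elementary but chart-dependent. Two small remarks. First, your opening appeal to the main theorem of elimination theory for closedness of $\pi(V(I))$ is never used (and is circular in spirit, since the present theorem is a version of that result); both inclusions are established pointwise, so you should simply drop it. Second, the identification $M_N/\mathfrak{m}_{x_0}M_N\cong K[t]_N/(I_{x_0})_N$ is not "automatic from freeness" alone: freeness of $K[t,x]_N$ identifies the fiber of the ambient module with $K[t]_N$, but you also need the specialization map $I\cap K[t,x]_N\to (I_{x_0})_N$ to be surjective. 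This is where the hypothesis that the generators $f_i$ are homogeneous in the $t$-variables enters: any element of $(I_{x_0})_N$ can be written as $\sum_i a_i(t)f_i(t,x_0)$ with $a_i$ $t$-homogeneous of degree $N-\deg_t f_i$, and then $\sum_i a_i(t)f_i(t,x)\in I\cap K[t,x]_N$ is a preimage. With that sentence added, the Nakayama step (annihilator of $M_N$ not contained in $\mathfrak{m}_{x_0}$, hence some $f$ with $f(x_0)\neq 0$ and $f\cdot\langle t_0,\dots,t_m\rangle^N\subseteq I$, so $f\in\hat I$) goes through exactly as you describe.
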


\begin{cor}\cite{David2007}
\label{cor:main}
 If $K$ is algebraically closed field, and $I$ a bihomogeneous ideal in $K[t_0,\dots,t_m,x_0,\dots,x_n]$   which means homogeneous both in the variables $t_0,\dots,t_m$ and the variables $x_1,\dots,x_n$. We can define
 $$ V(I) = \left\{(t,x) \in \mathbb{P}_K^m \times \mathbb{P}_K^n \,\, \Big| \,\, 
\begin{array}{c}
f(t,x) = 0 \\
\text{ for all bihomogeneous } f \in I 
\end{array}
\right\}$$ and $$ 
\begin{array}{lcll}
    \pi :& 
\mathbb{P}_K^m \times \mathbb{P}_K^n &\longrightarrow & \mathbb{P}_K^n \\ 
&((t_0:{\dots}:t_m),(x_0:{\dots}:x_n) ) &\longmapsto &(x_0:{\dots}:x_n).
\end{array}
$$
 Then $$ \pi(V(I)) = V(\hat{I}).$$
\end{cor}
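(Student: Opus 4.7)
The plan is to reduce Corollary \ref{cor:main} to Theorem \ref{thm:main} by working on standard affine charts of $\mathbb{P}_K^n$ and then gluing. Cover $\mathbb{P}_K^n = \bigcup_{j=0}^n U_j$ by the principal opens $U_j = \{x_j \neq 0\} \cong K^n$, with affine coordinates $y_i^{(j)} = x_i/x_j$ for $i \neq j$. Since $\pi^{-1}(U_j) = \mathbb{P}_K^m \times U_j$, it suffices to verify the equality $\pi(V(I)) \cap U_j = V(\hat{I}) \cap U_j$ for each $j$.

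Fix $j$ and let $I^{(j)} \subset K[t_0,\ldots,t_m,\, y_i^{(j)} : i \neq j]$ denote the dehomogenization of $I$ in $x_j$, obtained by choosing a bihomogeneous generating set of $I$ and substituting $x_j = 1$. Then $I^{(j)}$ is still homogeneous in $t_0,\ldots,t_m$, so Theorem \ref{thm:main} applies and gives
\[
\pi_j\bigl(V(I^{(j)})\bigr) \;=\; V\bigl(\widehat{I^{(j)}}\bigr) \;\subset\; K^n,
\]
where $\pi_j\colon \mathbb{P}_K^m \times K^n \to K^n$ is the projection and $\widehat{I^{(j)}} = (I^{(j)} : \langle t_0,\ldots,t_m\rangle^\infty) \cap K[y_i^{(j)} : i \neq j]$. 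Under the chart identification $U_j \cong K^n$, one has $V(I) \cap (\mathbb{P}_K^m \times U_j) = V(I^{(j)})$, since a bihomogeneous polynomial vanishes at $(t,x)$ with $x_j\neq 0$ if and only if its dehomogenization vanishes at the corresponding affine point. Hence the left-hand side of the displayed equation equals $\pi(V(I)) \cap U_j$.

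It remains to identify $V(\widehat{I^{(j)}})$ with $V(\hat{I}) \cap U_j$. The key lemma to establish is that dehomogenization in $x_j$ commutes with the two operations used to form $\hat{I}$: saturation by $\langle t_0,\ldots,t_m\rangle^\infty$ and intersection with $K[x_0,\ldots,x_n]$. For the saturation, the variables $t_0,\ldots,t_m$ are disjoint from $x_j$, so for a bihomogeneous $f$ one has $f \cdot t_i^N \in I$ if and only if the dehomogenization $f^{(j)} \cdot t_i^N$ lies in $I^{(j)}$; this gives $(I:\langle t_0,\ldots,t_m\rangle^\infty)^{(j)} = I^{(j)} : \langle t_0,\ldots,t_m\rangle^\infty$. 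For the elimination step, homogeneous polynomials in $K[x_0,\ldots,x_n]$ correspond bijectively to polynomials in $K[y_i^{(j)} : i \neq j]$ via dehomogenization and rehomogenization in $x_j$, and this bijection preserves vanishing on $U_j$. Combining these observations yields $V(\widehat{I^{(j)}}) = V(\hat{I}) \cap U_j$, and hence $\pi(V(I)) \cap U_j = V(\hat{I}) \cap U_j$ for every $j$. Taking the union over $j$ gives the corollary.

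The main obstacle is this key commutation lemma: verifying cleanly that dehomogenization in $x_j$ commutes with saturation by $\langle t_0,\ldots,t_m\rangle^\infty$ on bihomogeneous ideals. The argument is essentially bookkeeping, but one must keep careful track of the powers of $x_j$ introduced when rehomogenizing witnesses of membership in the saturation, and of the bidegree, to ensure that passing between $I$ and $I^{(j)}$ preserves exactly the required containments.
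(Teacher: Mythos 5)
Your route is correct but genuinely different from the paper's. The paper first proves that $\hat{I}$ is homogeneous in $x_0,\dots,x_n$ (by decomposing an element of $\hat I$ into homogeneous parts and using that a power of each $t_j$ clears it into the homogeneous ideal $I$), and then, instead of passing to charts, uses that $I$ is homogeneous in $t_0,\dots,t_m$ to view $V(I)$ inside $\mathbb{P}^m_K\times K^{n+1}$, applies Theorem \ref{thm:main} with the affine factor $K^{n+1}$ to get the cone equality $\pi_a(V(I))=V(\hat I)\subset K^{n+1}$, and concludes by passing from the cone to the associated projective variety. This buys a short proof whose only algebraic input is the homogeneity of $\hat I$; no compatibility of ideal operations with dehomogenization is needed. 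Your chart-by-chart argument works as well, but it trades that for the commutation lemma, whose correct formulation is $D_j(I:\langle t_0,\dots,t_m\rangle^\infty)=I^{(j)}:\langle t_0,\dots,t_m\rangle^\infty$ where $D_j$ denotes dehomogenization at $x_j$: note that your stated equivalence ``$f\cdot t_i^N\in I$ iff $f^{(j)}t_i^N\in I^{(j)}$'' fails as written in the ``if'' direction, since membership of $f^{(j)}t_i^N$ in $I^{(j)}$ only yields $x_j^{s}f\,t_i^N\in I$ for some $s\ge 0$; this is harmless because $x_j$ is set to $1$ on the chart (so $g\in D_j(I)$ iff $x_j^s g^{\mathrm{hom}}\in I$ for some $s$, and the saturation and the elimination step both absorb the extra power), and you correctly flag exactly this bookkeeping as the point needing care. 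One small omission: you should also record (as the paper does, and as is immediate since $I$, hence $I:\langle t_0,\dots,t_m\rangle^\infty$, is bihomogeneous) that $\hat I$ is homogeneous in the $x$-variables; this is needed both for $V(\hat I)\subset\mathbb{P}^n_K$ to be well defined and for your identification of $V(\hat I)\cap U_j$ with $V(D_j(\hat I))$ via dehomogenization of a homogeneous generating set.
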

\begin{proof}
 First of all we need to show that the ideal $\hat{I} \subset K[x_0,\dots,x_n]$ is homogeneous. Note that  the ideal $I$ is a homogeneous ideal of $K[t_0,\dots,t_m,x_0,\dots,x_n]$. We recall that $$\hat{I} = (I:\langle t_0,\dots,t_m \rangle) \cap K[x_0,\dots,x_n].$$
 Let $f \in \hat{I}$ and write $f = \sum_i f_i$ as sum of homogeneous parts. There are $s_j \in \mathbb{N}$ such that $ f \cdot t_j^{s_j} \in I$ for all $j= 0,\dots, m.$ Then $\sum_i f_i  \cdot t_j^{s_j} \in I$. The expression $\sum_i f_i t_j^{s_j} $ is the decomposition of $f t_j^{s_j}$ as sum of homogeneous parts. \newline
 Since $I$ is a homogeneous ideal, each summand $f_i t_j^{s_j} \in I $ for all $j = 0,\dots,m.$ This means that each homogeneous summand  $f_i \in \hat{I}$ for all $f \in \hat{I}$. This implies that $\hat{I} \subset K[x_0,\dots,x_n]$ is homogeneous. Then $V(\hat{I})\subset \mathbb{P}_K^n $ is well defined.\newline
 Now we observe that the  ideal $I$ also defines a vanishing locus in $\mathbb{P}_K^m \times K^{n+1}$ because it is homogeneous in $t_0, \dots, t_m.$ Theorem \ref{thm:main} gives the cone $$ \pi_a(V(I)) = V(\hat{I}) \subset  K^{n+1}$$ with $\pi_a : \mathbb{P}_K^m \times K^{n+1} \rightarrow K^{n+1}$ the projection on the second factor.\\
It follows that the  corresponding projective variety coincides with $\pi(V(I))$, so
 $$ \pi(V(I)) = V(\hat{I}) \subset \mathbb{P}_K^n.$$
\end{proof} 

\begin{rem}
 The algorithm to compute the elimination ideal $\hat{I}$ needs an elimination ordering for $t_0,\dots,t_m$ on 
 the monomials of $K[t_0,\dots,t_m,x_0,\dots,x_n]$. In our examples, we will always use the product ordering  $>$  of $>_\texttt{dp}$ on the monomials $t^{\alpha}$ in the variables $t_0,\dots,t_m$ and $>_{\text{dp}}$ on the monomials $x^\beta$ in the variables $x_0,\dots,x_n$ defined by 
 $$ t^\alpha x^\beta > t^{\alpha '} x^{\beta'} \Leftrightarrow t^\alpha > t^{\alpha '} \text{ or } (t^\alpha = t^{\alpha '} \text{ and } x^\beta > x^{\beta'}). $$
\end{rem}

Now, we describe the graph of a rational map using saturation.

\begin{pro}
\label{prop:graph}
	Let a rational map $$
 \begin{array}{lcll}
 \Phi :& X = V(I) \subset
	\mathbb{P}_K^m &\dashrightarrow &Y=V(J) \subset \mathbb{P}_K^n \\
 &t &\mapsto &(\overline{f_0}(t):{\dots}: \overline{f_n}(t))
 \end{array}
 $$ with $X$ is irreducible,  $\overline{f}_i \in 
	K[t_0,\dots,t_m]/I $ are homogeneous of the same degree, and $I \subset K[t_0,\dots, t_m]$ is a prime ideal . Then $$\overline{\Gamma(\Phi)} = V\left(\left( I+J
	\right) : {\langle f_0,\dots,f_n \rangle}^\infty \right) \subset \mathbb{P}_K^m \times \mathbb{P}_K^n$$ 
 with 
 $$J = \left\langle \operatorname{minors}_2\begin{pmatrix}
                                 x_0  & \cdots & x_n\\
                                 f_0  & \cdots & f_n
                                \end{pmatrix}
 \right\rangle \subset K[t_0,\dots,t_m,x_0,\dots,x_n] .$$
\end{pro}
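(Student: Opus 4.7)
The plan is to identify $V(I+J) \subset \mathbb{P}_K^m \times \mathbb{P}_K^n$ set-theoretically as the union of the graph $\Gamma(\Phi)$ with a spurious component carried by the base locus of $\Phi$, and then apply the saturation-equals-closure-of-complement principle to strip away that extra component.

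First I would analyze $V(I+J)$ directly. A point $(t,x)$ lies in $V(I+J)$ precisely when $t \in V(I) = X$ and every $2 \times 2$ minor $x_i f_j(t) - x_j f_i(t)$ vanishes. The minor condition is equivalent to the rows $(x_0,\dots,x_n)$ and $(f_0(t),\dots,f_n(t))$ being linearly dependent. If some $f_i(t) \neq 0$, then the projective point $(x_0:\dots:x_n)$ must equal $(f_0(t):\dots:f_n(t)) = \Phi(t)$, so $(t,x) \in \Gamma(\Phi)$. If instead every $f_i(t) = 0$, then $(t,x)$ lies in $B \times \mathbb{P}_K^n$, where $B = V(I + \langle f_0,\dots,f_n\rangle) \subset \mathbb{P}_K^m$ is the base locus. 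The reverse inclusions are immediate from the identity $x_i f_j - x_j f_i$, yielding the set-theoretic decomposition
$$V(I+J) \;=\; \Gamma(\Phi) \;\cup\; \bigl(B \times \mathbb{P}_K^n\bigr).$$

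Next I would invoke the geometric interpretation of saturation: the bihomogeneous analogue of Proposition \ref{pro:quotient} gives that $V\bigl((I+J) : \langle f_0,\dots,f_n\rangle^\infty\bigr)$ equals $\overline{V(I+J) \setminus V(\langle f_0,\dots,f_n\rangle)}$, where the vanishing locus of $\langle f_0,\dots,f_n\rangle$ in the product, since the generators involve only the $t$-variables, is $V(f_0,\dots,f_n) \times \mathbb{P}_K^n$. On this complement not all $f_i(t)$ vanish, so the case analysis above shows that $V(I+J) \setminus \bigl(V(f_0,\dots,f_n) \times \mathbb{P}_K^n\bigr) = \Gamma(\Phi)$ exactly. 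Taking closures then yields $\overline{\Gamma(\Phi)}$, as claimed.

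The main obstacle is justifying the saturation-closure formula in the bihomogeneous setting on $\mathbb{P}_K^m \times \mathbb{P}_K^n$, since the version recalled in the paper is phrased for affine varieties. I would address this by covering $\mathbb{P}_K^m \times \mathbb{P}_K^n$ with the standard affine charts $\{t_i \neq 0\} \times \{x_j \neq 0\}$, on which both the ideal saturation and the closure of the complement commute with dehomogenization; alternatively, one could apply Corollary \ref{cor:main} after passing to affine cones in the second factor. The primality of $I$ (i.e.\ irreducibility of $X$) ensures that $\overline{\Gamma(\Phi)}$, which is irreducible of the same dimension as $X$ via its projection to $X$, is a proper subset of $V(I+J)$ not containing the higher-dimensional spurious component $B \times \mathbb{P}_K^n$, so the saturation genuinely removes only the latter.
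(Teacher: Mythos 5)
Your proposal is correct in substance but takes a genuinely different, more geometric route than the paper. The paper argues ideal-theoretically in both directions: it first shows that every element of $(I+J):\langle f_0,\dots,f_n\rangle^\infty$ vanishes on $\Gamma(\Phi)$ (with a case analysis at points where all $f_i$ vanish, handled by switching to another representative and using irreducibility of $X$), and then proves minimality by taking any bihomogeneous $f$ vanishing on the graph, observing $f(t_0,\dots,t_m,f_0,\dots,f_n)\in I$, and using a Taylor-expansion/substitution identity to conclude $f\cdot f_j^M\in I+J$, i.e.\ $f$ lies in the saturation. You instead decompose $V(I+J)$ set-theoretically as $\Gamma(\Phi)\cup(B\times\mathbb{P}^n_K)$ with $B$ the base locus and invoke the principle that saturation cuts out the closure of the complement. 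Your route makes transparent what the saturation removes; the paper's route has the advantage of working entirely with the bigraded ideals, so it never needs the multiprojective version of $V(L:J^\infty)=\overline{V(L)\setminus V(J)}$, which the paper only records in the affine setting. You correctly identify this as the main point to justify, and either of your fixes (standard charts, or passing to affine cones in the spirit of Corollary \ref{cor:main}, where the irrelevant loci are harmless because they do not affect the multiprojective vanishing set) does go through over the algebraically closed field $K$.

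Two small corrections are needed. First, $V(I+J)\setminus\bigl(V(f_0,\dots,f_n)\times\mathbb{P}^n_K\bigr)$ is not all of $\Gamma(\Phi)$ but only the graph over $X\setminus V(f_0,\dots,f_n)$: points of the graph lying over $D(\Phi)\cap V(f_0,\dots,f_n)$, where $\Phi$ is defined only by a different representative, are removed along with the spurious component. This is harmless, but the closure step then needs a density argument: since $X$ is irreducible and not all $f_i$ lie in $I$, the open set $X\setminus V(f_0,\dots,f_n)$ is dense in $D(\Phi)$, and since the projection $\Gamma(\Phi)\to D(\Phi)$ is a homeomorphism, the graph over this open set is dense in $\Gamma(\Phi)$, so its closure is indeed $\overline{\Gamma(\Phi)}$. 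Second, your closing remark that $\overline{\Gamma(\Phi)}$ does not contain the component $B\times\mathbb{P}^n_K$ is not needed and is false in general: for the projection $\mathbb{P}^2_K\dashrightarrow\mathbb{P}^1_K$ from a point, $B$ is that point and the fiber of $\overline{\Gamma(\Phi)}$ over it is all of $\mathbb{P}^1_K$, so $B\times\mathbb{P}^1_K\subset\overline{\Gamma(\Phi)}$. Fortunately nothing in your argument depends on that remark.
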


\begin{proof}
	We recall that the graph of the rational map $\Phi$ is defined by
	$$ \Gamma(\Phi) = \left\{(t,x),\, t \in D(\Phi) \text{ and } x=\Phi(t) \right\} \subset \mathbb{P}_K^m \times  \mathbb{P}_K^n.$$

 First, let's show that $\Gamma(\Phi) \subset V( (I+J): {\langle f_0,\dots, f_n \rangle}^\infty )$ . Let $f\in \left( I+J
	\right) : {\langle f_0,\dots,f_n \rangle}^\infty$ and $(\overline{t_0}, \dots, \overline{t_m},\overline{x_0},\dots,\overline{x_n}) \in \Gamma(\Phi)$. We have to show that $f(\overline{t_0}, \dots, \overline{t_m},\overline{x_0},\dots,\overline{x_n}) = 0.$\\
	If there is $0\leq i \leq n$ such that $f_i(\overline{t_0},\dots,\overline{t_m}) \neq 0$. There is $N \in \mathbb{N}$ such that $f f_i^N \in I+J.$
	This leads to 
 $$
 f(\overline{t_0}, \dots, \overline{t_m},\overline{x_0},\dots,\overline{x_n}) f_i^N(\overline{t_0},\dots,\overline{t_m})  =        0   
 \text{ and } 
 f(\overline{t_0}, \dots, \overline{t_m},\overline{x_0},\dots,\overline{x_n}) = 0 $$

	It follows that 
 $$ f(\overline{t},\overline{f_0}(\overline{t}),\dots,\overline{f_n}(\overline{t})) = 0  \text{ for all } \overline{t} \in X \setminus V(f_0, \dots, f_n).$$
 Now, if $f_i(\overline{t_0},\dots, \overline{t_m}) = 0$ for all $0 \leq  i \leq n.$ Then, there exists another representative $(\overline{g_0},\dots,\overline{g_n})$,  with $g_i \in K[t_0,\dots,t_m]$ homogeneous polynomials of the same degree , of the rational map $\Phi$ such that $\Phi(\overline{t}) = (\overline{g_0}(\overline{t}): \dots : \overline{g_n}(\overline{t}))$. \\
 Set $g(t) = f(t,g_0(t),\dots , g_n(t)).$ Then $g(\overline{t}) = 0 $ for all $$ \overline{t} \in U = \left(X \setminus V(f_0,\dots,f_n) \right) \cap  \left(X \setminus V(g_0,\dots,g_n) \right) \subset X.$$
 This means that $U \subset V(I + \langle g \rangle) \subset X.$ On the other hand, we have $X = U \cup (X\setminus U)$. This implies that $X = V(I + \langle g \rangle) \cup (X \setminus U).$\\
 As $X$ is an irreducible variety, we must have $V(I + \langle g \rangle) = X$ or $X\setminus U = X.$ By definition, $X \setminus U  \neq X$ because $U \neq \emptyset$. Then $$V(I + \langle g \rangle) = X = V(I)$$. It follows that $g \in I$ and $g(\overline{t}) = 0 \text{ for all } \overline{t} \in X$. \\
    It remains to show that the set $V((I+J) : {\langle  f_0, \dots, f_n\rangle}^\infty )$ is the smallest closet subset containing the graph $\Gamma(\Phi)$ in $\mathbb{P}^m \times \mathbb{P}^n.$ Let $L \subset K[t_0,\dots, t_m, x_0, \dots, x_n]$ a bihomogeneous ideal such that $\Gamma(\Phi) \subset V(L) \subset \mathbb{P}^m \times \mathbb{P}^n.$ 
	Let $f \in L$ and bihomogeneous. For all $\overline{t} = (\overline{t_0}:\dots: \overline{t_m}) \in V(I)$, we have $$f(\overline{t_0},\dots,\overline{t_m},f_0(\overline{t}), \dots, f_n(\overline{t})) = 0$$. This means that $f(t_0,\dots,t_m,f_0,\dots,f_n) \in I.$\\
	By Taylor's Theorem in several variables, there are $\alpha_i \in K[t_0,\dots,t_m,x_0,\dots,x_n,\lambda]$  such that
	\begin{align*}
	f(t,x) & = f(t_0,\dots,t_m,\lambda f_0 +x_0 - \lambda f_0, \dots,\lambda f_n + x_n - \lambda f_n)\\
	& = f(t_0,\dots,t_m,\lambda f_0,\dots,\lambda f_n)+\sum_{i=0}^{i=n}\alpha_i (x_i - \lambda f_i)\\
	& = \lambda^N f(t_0,\dots,t_m,f_0,\dots,f_n)+\sum_{i=0}^{i=n}\alpha_i (x_i - \lambda f_i)
	\end{align*}
  with $f(t,x) = f(t_0,\dots,t_m,x_0,\dots,x_n).$
	By substituting $\lambda = \frac{x_j}{f_j}$ with $0\leq j \leq n$ , we get
	\begin{align*}
	f(t,x) & = \left(\frac{x_j}{f_j}\right)^N f(t_0,\dots,t_m,f_0,\dots,f_n)+\sum_{i=0}^{i=n}{\alpha_i}_{|\lambda=\frac{x_j}{f_j}} (x_i - \frac{x_j}{f_j} f_i)
	\end{align*}
	It follows that there is $M \in \mathbb{N}$ sufficiently large such that 
	\begin{align*}
	f(t_0,\dots,t_m,x_0,\dots,x_n) f_j^M &\in I+J
	\end{align*}
	which means that $f \in \left( I+J
	\right) : {\langle f_0,\dots,f_n \rangle}^\infty$. Thus $$V((I+J):{\langle f_0,\dots,f_n \rangle}^\infty) \subset V(L)$$

\end{proof}

Combining Proposition \ref{prop:graph} and elimination, we are able to describe the image of rational map in  term of elimination.

\subsection{Modular method for computing the image of rational map}
This section aims to show the process of computing the image of a rational map. The main idea is to eliminate the source variables from the graph. Then, the resulting ideal defines the closure of the image a rational map as illustrated by the following theorem, which is stated in \cite{boehm2017}.
\begin{thm}
Let $K$ be an algebraically closed field, $I \subset K[t_0,\dots,t_m]$ a homogeneous prime ideal and
$$\begin{array}{lcll}
\Phi : & X = V(I) \subset \mathbb{P}_K^m & \dashrightarrow & \mathbb{P}_K^n\\
& t = (t_0:{\dots}:t_m) & \longmapsto & (\overline{f_0}(t):{\dots}: \overline{f_n}(t))
\end{array}
$$
a rational map with $\overline{f_i} \in K[t_1,\dots,t_m]/I$ homogeneous of  the
same degree. Then
$$\overline{\operatorname{Im}(\Phi)} = V((J: {\langle f_0,\dots,f_n \rangle}^\infty)\cap K[x_0,\dots,x_n]) \subset \mathbb{P}_K^n$$
where
$$J = \left\langle I,  \operatorname{minors}_2\begin{pmatrix}
x_0  & \cdots & x_n\\
f_0  & \cdots & f_n
\end{pmatrix}
\right\rangle \subset K[t_0,\dots,t_m,x_0,\dots,x_n] .$$
Moreover, if the domain of definition $D(\Phi) = X$, then the image $\operatorname{Im}(\Phi)$ is Zariski closed.
\end{thm}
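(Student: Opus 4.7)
The plan is to combine two earlier results: Proposition \ref{prop:graph}, which describes the closure of the graph of $\Phi$ inside $\mathbb{P}_K^m \times \mathbb{P}_K^n$ via saturation, and Corollary \ref{cor:main}, the main theorem of elimination theory, which computes second projections in biprojective space through projective elimination. Applying Proposition \ref{prop:graph} with target $Y = \mathbb{P}_K^n$ (noting that the proposition's ``$I + (\text{minors})$'' agrees with the ideal $J$ in the statement, since $J$ here already contains $I$) gives
$$\overline{\Gamma(\Phi)} = V(L) \subset \mathbb{P}_K^m \times \mathbb{P}_K^n, \qquad L := J : \langle f_0, \ldots, f_n\rangle^\infty.$$

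Next I would establish the projection identity $\pi(\overline{\Gamma(\Phi)}) = \overline{\operatorname{Im}(\Phi)}$, where $\pi$ is the projection onto $\mathbb{P}_K^n$. The inclusion ``$\subseteq$'' is immediate because $\pi^{-1}(\overline{\operatorname{Im}(\Phi)})$ is a closed set containing $\Gamma(\Phi)$ and hence its closure. For ``$\supseteq$'', the main theorem of elimination theory says $\pi$ is closed, so $\pi(\overline{\Gamma(\Phi)})$ is a closed set containing $\pi(\Gamma(\Phi)) = \operatorname{Im}(\Phi)$, and therefore contains $\overline{\operatorname{Im}(\Phi)}$.

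The third step—and the main obstacle—is reconciling $\pi(V(L))$ with the formula in the statement. Corollary \ref{cor:main} directly yields $\pi(V(L)) = V\bigl((L:\langle t_0,\ldots,t_m\rangle^\infty) \cap K[x_0,\ldots,x_n]\bigr)$, whereas the theorem asserts the simpler formula $V(L \cap K[x_0,\ldots,x_n])$. Since $L \cap K[x] \subseteq (L:\langle t\rangle^\infty) \cap K[x]$, one inclusion $V(L \cap K[x]) \supseteq \overline{\operatorname{Im}(\Phi)}$ is automatic. For the reverse, I would show that if $g \in K[x]$ vanishes on $\overline{\operatorname{Im}(\Phi)}$ then in fact $g \in L \cap K[x]$: the minors in $J$ enforce $x_i f_j \equiv x_j f_i \pmod{J}$, which by a substitution argument yields $f_0^d \cdot g(x_0,\ldots,x_n) \equiv x_0^d \cdot g(f_0,\ldots,f_n) \pmod{J}$ with $d = \deg g$, and analogously with any $f_i$ in place of $f_0$; combining with $g(f_0,\ldots,f_n) \in I:\langle f_0,\ldots,f_n\rangle^\infty \subseteq L$ gives $g \in L$ after saturating by $\langle f_0,\ldots,f_n\rangle$ once more. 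This identifies $V(L \cap K[x])$ with $\overline{\operatorname{Im}(\Phi)}$.

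Finally, for the moreover statement: if $D(\Phi) = X$ then $\Phi \colon X \to \mathbb{P}_K^n$ is a genuine morphism from a projective, hence complete, variety, so its graph $\Gamma(\Phi)$ is already closed in $X \times \mathbb{P}_K^n$, and the main theorem of elimination (applied to the closed projection) sends it to the closed set $\operatorname{Im}(\Phi) = \pi(\Gamma(\Phi)) \subseteq \mathbb{P}_K^n$.
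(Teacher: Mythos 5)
Your argument is correct, and its skeleton is the same as the paper's: express $\overline{\operatorname{Im}(\Phi)}$ as $\pi(\overline{\Gamma(\Phi)})$ using that $\pi$ is a closed map, compute $\overline{\Gamma(\Phi)} = V(L)$ with $L = J:\langle f_0,\dots,f_n\rangle^\infty$ by Proposition \ref{prop:graph}, and project via Corollary \ref{cor:main}; the ``moreover'' part is likewise handled in both proofs by closedness of the graph plus closedness of $\pi$ (note, though, that the graph is closed because the target is separated --- the paper uses that $(\operatorname{Id}_X,\Phi)$ is a closed immersion --- not because $X$ is complete; completeness only makes $\pi(\Gamma(\Phi))$ closed). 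The genuine difference is how the elimination ideal supplied by Corollary \ref{cor:main} gets identified with $L\cap K[x_0,\dots,x_n]$. The paper does this set-theoretically: it passes through the saturated ideal $(L:\langle x_0,\dots,x_n\rangle^\infty)\cap K[x_0,\dots,x_n]$ and checks that its projective vanishing locus equals $V(L\cap K[x_0,\dots,x_n])$ by the pointwise remark that any point of $\mathbb{P}^n_K$ has some coordinate $x_i\neq 0$, so $f x_i^N\in L\cap K[x_0,\dots,x_n]$ forces $f$ to vanish there. You argue ideal-theoretically, proving $I(\overline{\operatorname{Im}(\Phi)})\subseteq L\cap K[x_0,\dots,x_n]$ via the congruences $f_i^{d}\,g(x)\equiv x_i^{d}\,g(f_0,\dots,f_n)\pmod J$; this works, provided you take $g$ homogeneous and justify $g(f_0,\dots,f_n)\in I$ (it vanishes on the dense open subset $X\setminus V(f_0,\dots,f_n)$ of the irreducible $X$, and $I$ is prime, so in particular $I:\langle f_0,\dots,f_n\rangle^\infty=I$), and it buys you a self-contained algebraic identification that does not depend on which irrelevant ideal the elimination statement saturates with. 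In fact your ``main obstacle'' can be dissolved with no computation at all: since $L$ is already $\langle f_0,\dots,f_n\rangle$-saturated and $\langle f_0,\dots,f_n\rangle\subseteq\langle t_0,\dots,t_m\rangle$, one has $L:\langle t_0,\dots,t_m\rangle^\infty=L$, so the projective elimination ideal furnished by Corollary \ref{cor:main} is literally $L\cap K[x_0,\dots,x_n]$.
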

\begin{proof}
    By definition, $\operatorname{Im}(\Phi) = \pi(\Gamma(\Phi))$ with $\pi : \mathbb{P}_K^m \times \mathbb{P}_K^n \rightarrow \mathbb{P}_K^n$ the projection on the second factor. Since $\pi$ is continuous, we have $$\overline{\operatorname{Im}(\Phi)} = \overline{\pi(\Gamma(\Phi))} = \pi(\overline{\Gamma(\Phi)}). $$
    Then, by Proposition \ref{prop:graph} and Corollary \ref{cor:main} , we get
    \begin{align*}
            \overline{\operatorname{Im}(\Phi)}  & = \pi(\overline{\Gamma(\Phi)}) \\
            & = \pi\left(V(J:{\langle f_0, \dots, f_n \rangle}^\infty)\right) \\
            & = V\left((L: {\langle x_0, \dots, x_n \rangle}^\infty ) \cap K[x_0,\dots, x_n] \right)
    \end{align*}
    with $L = J:{\langle f_0, \dots, f_n \rangle}^\infty \subset K[t_0,\dots,t_m,x_0,\dots, x_n].$\\
    It remains to show that  
    $V\left((L: {\langle x_0, \dots, x_n \rangle}^\infty ) \cap K[x_0,\dots, x_n] \right) = V(L \cap K[x_0,\dots,x_n]).$
    As $L \cap K[x_0,\dots,x_n] \subset (L: {\langle x_0, \dots, x_n \rangle}^\infty) \cap K[x_0,\dots, x_n]$, we have the inclusion
$$V\left((L: {\langle x_0, \dots, x_n \rangle}^\infty ) \cap K[x_0,\dots, x_n] \right)  \subset  V(L \cap K[x_0,\dots,x_n]).$$
For the other inclusion, let $(\overline{x_0},\dots, \overline{x_n}) \in  V(L \cap K[x_0,\dots,x_n])$ and $ f \in (L: {\langle x_0, \dots, x_n \rangle}^\infty ) \cap K[x_0,\dots, x_n]$. There exists $ 1\leq i \leq n$ such that $\overline{x_i} \neq 0$. On the another, there is $N \in \mathbb{N}$ such that $f x_i^N \in L$. Since $f \in K[x_0,\dots, x_n]$, we have  $f x_i^n \in L \cap K[x_0,\dots, x_n]$.
Then, it follows that $f(\overline{x_0}, \dots, \overline{x_n}) \overline{x_i}^N = 0 $ and $f(\overline{x_0},\dots,\overline{x_n}) = 0.$\\
Now if $D(\Phi) = X$. This means that $f$ is a morphism. Then, the morphism $(\operatorname{Id}_X,f) : X \rightarrow X \times Y, \, \overline{t} \mapsto (\overline{t}, f(\overline{t}))$ is a closed immersion. It follows that the  graph  $\Gamma(\Phi)$ of the rational map $\Phi$ are closed and $\text{Im}(\Phi)= \pi(\Gamma(\Phi))$ is closed by Corollary \ref{cor:main}.

\end{proof}
The algorithm to compute the image of rational maps is presented in Algorithm \ref{algo:image}.

	\begin{algorithm}[ht]
		\caption{\cite{boehm2017} Image of a rational map }
  \label{algo:image}
		\begin{algorithmic}[1]
			\REQUIRE Rational map
			$$\Phi :  X = V(I) \subset
			\mathbb{P}_K^m \dashrightarrow \mathbb{P}_K^n , \, t \mapsto
			(\overline{f_0}(t):{\dots}: \overline{f_n}(t))$$ with $I \subset K[t_0,\dots,t_m]$  a prime ideal and $\overline{f}_i \in 
			K[t_0,\dots,t_m]/I $ 
			\ENSURE Ideal $S \subset K[x_0,\dots,x_n]$ defining the closure of the image of the rational map $\Phi.$
            \STATE Compute the ideal $$J = \left\langle I,\text{minors}_2\begin{pmatrix}
                                 x_0  & \cdots & x_n\\
                                 f_0  & \cdots & f_n
                                \end{pmatrix}
 \right\rangle \subset \mathbb{Q}[t_0,\dots,t_m,x_0,\dots,x_n].$$
            \STATE Compute $S = (J: {\langle f_0,\dots,f_n \rangle}^\infty)\cap K[x_0,\dots,x_n]$.
			\RETURN $S$.
		\end{algorithmic}
	\end{algorithm}

\begin{exa}
\label{exa:stereographic}
    Consider  the rational map
 $$\begin{array}{lcll}
  \Phi : &  \mathbb{P}_\mathbb{C}^1 & \dashrightarrow & \mathbb{P}_\mathbb{C}^2\\
  & (t_0:t_1) & \longmapsto & (\overline{{t^2_0}+ t_1^2} : \overline{t^2_0 - t^2_1 }: \overline{ 2 t_0 t_1}).
 \end{array}
$$
Set $J = \left\langle \text{minors}_2\begin{pmatrix}
                                 x_0  &  x_1 & x_2\\
                                 t^2_0+t^2_1   & t^2_0 - t^2_1 & 2 t_0 t_1
                                \end{pmatrix}
 \right\rangle   \subset K[t_0,t_1,x_0,x_1,x_2]$ . The ideal is generated by they following bihomogeneous polynomials
 \begin{align*}-t_0^2 x_2+2t_0t_1x_1+t_1^2x_2, \\
-t_0^2x_2+2t_0t_1x_0-t_1^2x_2,\\
-t_0^2x_0+t_0^2x_1+t_1^2x_0+t_1^2x_1\end{align*}
 Using the command \texttt{sat} in Singular, the ideal of the graph $$ G = J:{\langle t^2_0 + t^2_1, t^2_0 - t^2_1, 2 t_0 t_1 \rangle}^\infty$$ is generated by
 \begin{align*}x_0^2-x_1^2-x_2^2,\\
t_0x_2-t_1x_0-t_1x_1,\\
t_0x_0-t_0x_1-t_1x_2 \end{align*}
 using as ordering  the product ordering of \texttt{dp} on $t_0, \dots, t_m$ and \texttt{dp} on $x_0,\dots, x_n$.
 Then, the ideal $$ \left(J:{\langle t^2_0 + t^2_1, t^2_0 - t^2_1, 2 t_0 t_1\rangle}^\infty\right) \cap K[x_0,x_1] \subset K[x_0,x_1,x_2] $$ is generated by the polynomial $ x_0^2-x_1^2-x_2^2.$
 It follows that 
 $$\overline{\operatorname{Im}(\Phi)} = V(\langle x_0^2-x_1^2-x_2^2 \rangle) \subset \mathbb{P}^2_\mathbb{C}.$$
 Clearly, the rational map $\Phi$ is a morphism. Then
 $$\operatorname{Im}(\Phi) = V(\langle x_0^2-x_1^2-x_2^2 \rangle) \subset \mathbb{P}^2_\mathbb{C}.$$
\end{exa}

\subsection{Algorithm for computing the inverse of rational map}
In this section, we discuss  the test of birationality of a rational map given in \cite{Simis2004} and show how the  ideal of the graph in Proposition \ref{prop:graph} can be used to know if a rational map is birational. The approach uses a lot of terms from universal algebra, so we first recall them as well as the key theorems needed. In the following, $M$ is a $A$-module and $T(M)$ denotes its tensor algebra, see for more details and proofs \cite{Bourbaki1-3} Chapter \Romannum{3}. \\
Consider the direct sum of $A$-modules $\bigoplus_{n \geq 0} T^n(M)$, where $T^n(M)$ denotes $\bigotimes^{n} M$ the $n$-th tensor power of $M$ with $T^0(M) = A$ and $T^1(M) = M.$ Then, for $p,q > 0 $, $x_i \in M$ and $\alpha \in A$, consider the multiplication

 \begin{equation}
\label{tensorAlgebra}
\left\{ \begin{array}{r@{}l}
    \left(x_1 \otimes \cdots \otimes x_p \right) \cdot (x_{p+1} \otimes \cdots \otimes x_{p+q}) &{}= x_1 \otimes \cdots \otimes x_{p+q} \\
    \alpha \cdot (x_1 \otimes \cdots \otimes x_p ) &{}= \alpha (x_1 \otimes \cdots \otimes x_p). 
\end{array} \right.
\end{equation}

It is clear that this multiplication defines a graded $A$-algebra structure on $\bigoplus_{n \geq 0} T^n(M)$.  The algebra $\bigoplus_{n \geq 0} T^n(M)$ with the multiplication defined in \ref{tensorAlgebra} is called \textbf{tensor algebra} of $M$.

\begin{pro}\cite{Bourbaki1-3}
	Let $E$  be a $A$-algebra and $f: M \to E$ a $A$-linear map. There exists an unique $A$-algebra homomorphism $g:T(M) \to E$ such that $f = g \circ i$, where the map $i$ is the canonical injection $M \to T(M)$. 
\end{pro}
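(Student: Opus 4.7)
The plan is to construct $g$ graded piece by graded piece using the universal property of tensor products, then verify it respects the multiplication defined in \eqref{tensorAlgebra}, and finally check that the construction is forced, yielding uniqueness.

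First I would define $g$ on each summand $T^n(M)$ separately. On $T^0(M)=A$ set $g(a)=a\cdot 1_E$, which is forced since $g$ must be an $A$-algebra homomorphism. On $T^1(M)=M$ set $g=f$, as required by the factorization condition $f=g\circ i$. For $n\ge 2$, consider the map
\[
\varphi_n : M^n \to E,\qquad (x_1,\ldots,x_n)\mapsto f(x_1)\cdot f(x_2)\cdots f(x_n).
\]
Using that $f$ is $A$-linear and $E$ is an $A$-algebra (so multiplication in $E$ is $A$-bilinear and $A$ acts centrally), one checks that $\varphi_n$ is $A$-multilinear. By the universal property of the tensor product, $\varphi_n$ factors uniquely through an $A$-linear map $g_n : T^n(M)\to E$ sending $x_1\otimes\cdots\otimes x_n \mapsto f(x_1)\cdots f(x_n)$. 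Assembling the $g_n$ via the universal property of the direct sum yields an $A$-linear map $g:T(M)\to E$.

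Next I would verify that $g$ is an $A$-algebra homomorphism. By $A$-linearity it suffices to check multiplicativity on products of simple tensors. Given $u=x_1\otimes\cdots\otimes x_p \in T^p(M)$ and $v=x_{p+1}\otimes\cdots\otimes x_{p+q}\in T^q(M)$, the multiplication \eqref{tensorAlgebra} gives $u\cdot v = x_1\otimes\cdots\otimes x_{p+q}$, so
\[
g(u\cdot v)=f(x_1)\cdots f(x_{p+q})=\bigl(f(x_1)\cdots f(x_p)\bigr)\bigl(f(x_{p+1})\cdots f(x_{p+q})\bigr)=g(u)\,g(v),
\]
using associativity in $E$. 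The cases where one factor lies in $T^0(M)=A$ are handled by the second line of \eqref{tensorAlgebra} together with the definition $g(a)=a\cdot 1_E$ and $A$-linearity of multiplication in $E$. Finally $g(1_{T(M)})=g(1_A)=1_E$, so $g$ is a unital $A$-algebra homomorphism, and $g\circ i=f$ by construction.

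For uniqueness, suppose $g'\colon T(M)\to E$ is any $A$-algebra homomorphism with $g'\circ i = f$. Since $g'$ is an $A$-algebra homomorphism, $g'(a)=a\cdot 1_E$ on $T^0(M)=A$, and $g'(x)=f(x)$ on $T^1(M)=M$. Multiplicativity then forces
\[
g'(x_1\otimes\cdots\otimes x_n)=g'(x_1)\cdots g'(x_n)=f(x_1)\cdots f(x_n),
\]
which agrees with $g$ on all simple tensors. By $A$-linearity and the fact that simple tensors generate $T^n(M)$ as an $A$-module, $g'=g$ on each $T^n(M)$, hence on all of $T(M)$. No step is a serious obstacle here; the only care needed is to ensure that the multilinear maps $\varphi_n$ really descend through the tensor product, which is exactly what the universal property of $\otimes$ provides.
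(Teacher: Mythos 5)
Your proof is correct and is essentially the standard argument from Bourbaki (Algebra, Ch.~III) that the paper cites without reproducing: build $g$ degreewise via the universal property of the tensor power, check multiplicativity on simple tensors, and deduce uniqueness from the fact that $M$ together with $1$ generates $T(M)$ as an $A$-algebra. Since the paper gives no proof of its own, there is nothing further to compare; no gaps.
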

\begin{pro}\cite{Bourbaki1-3}
	Let $A$ a ring, $M$ and $N$ two $A$-modules and 
	$$ u:M \to N $$
	an  $A$-linear mapping. There exists an unique $A$-algebra homomorphism 
	$$T(u): T(M) \to T(N)$$ such that the diagram
	$$
	\begin{tikzcd}
	M \arrow{r}{u} \arrow[swap]{d}{i_M} & N \arrow{d}{i_N} \\%
	T(M) \arrow{r}{T(u)}& T(N)
	\end{tikzcd}
	$$ is commutative. Moreover, the homomorphism $T(u)$ is a graded algebra homomorphism.
\end{pro}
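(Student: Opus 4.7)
The plan is to derive this statement as a direct application of the universal property of the tensor algebra stated in the previous proposition. First, I would form the composite $A$-linear map
\[
f := i_N \circ u : M \longrightarrow T(N),
\]
viewing $T(N)$ as an $A$-algebra. By the universal property, there exists a unique $A$-algebra homomorphism $g : T(M) \to T(N)$ with $g \circ i_M = f = i_N \circ u$. I would then set $T(u) := g$; this immediately yields both the existence and the commutativity of the diagram, and the uniqueness statement is inherited verbatim from the uniqueness clause of the universal property (any algebra map making the square commute restricts along $i_M$ to $i_N \circ u$, hence must coincide with $g$).

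It remains to verify that $T(u)$ respects the grading, i.e.\ $T(u)(T^n(M)) \subset T^n(N)$ for every $n \ge 0$. For $n=0$, since $T(u)$ is an $A$-algebra homomorphism, it sends $1 \in T^0(M) = A$ to $1 \in T^0(N) = A$ and is $A$-linear on scalars. For $n=1$, the commutativity of the square gives $T(u)(x) = i_N(u(x)) \in T^1(N)$ for $x \in M$. For $n \ge 2$, a typical generator of $T^n(M)$ has the form $x_1 \otimes \cdots \otimes x_n$, which by the multiplication rule \eqref{tensorAlgebra} equals the product $i_M(x_1) \cdots i_M(x_n)$ in $T(M)$. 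Applying the algebra homomorphism $T(u)$ and using the commutativity of the square,
\[
T(u)(x_1 \otimes \cdots \otimes x_n) = T(u)(i_M(x_1)) \cdots T(u)(i_M(x_n)) = i_N(u(x_1)) \cdots i_N(u(x_n)) = u(x_1) \otimes \cdots \otimes u(x_n),
\]
which lies in $T^n(N)$. Since such elements generate $T^n(M)$ as an $A$-module and $T(u)$ is $A$-linear, we conclude $T(u)(T^n(M)) \subset T^n(N)$.

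No step here is especially delicate; the only point requiring a small amount of care is the verification of the grading, since one must appeal to the fact that $T^n(M)$ is generated as an $A$-module by the pure tensors $i_M(x_1)\cdots i_M(x_n)$ and then use multiplicativity of $T(u)$ together with the commutative square. Everything else is a clean invocation of the preceding universal property proposition.
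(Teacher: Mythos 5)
Your argument is correct and is exactly the standard one: the paper itself gives no proof (it simply cites Bourbaki), and in the cited reference $T(u)$ is constructed precisely as you do, by extending the linear map $i_N\circ u$ via the universal property of $T(M)$, with uniqueness inherited from that property and the grading checked on pure tensors, where $T(u)$ restricts to $u^{\otimes n}$. Nothing is missing; your care about $T^n(M)$ being generated as an $A$-module by products $i_M(x_1)\cdots i_M(x_n)$ is the only point that needs saying, and you say it.
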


\begin{pro}\cite{Bourbaki1-3}
	If $u:M \to N$ is a surjective $A$-linear mapping , the homomorphism $T(u) : T(M) \to T(N)$ is surjective and its kernel is the two-sided  ideal  of $T(M)$ generated by  $ker(u) \subset M \subset T(M)$. It follows that we have a $A$-algebra epimorphism 
\end{pro}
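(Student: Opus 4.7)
The plan is to prove the two assertions separately: surjectivity of $T(u)$ first, and then the description of its kernel, with the latter being the harder part.

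For surjectivity, I would observe that $T(N) = \bigoplus_{n\geq 0} T^n(N)$ is generated as an $A$-algebra by $T^0(N) = A$ and $T^1(N) = N$. Since $T(u)$ is an $A$-algebra homomorphism sending $A$ identically onto $A$ and $M$ onto $N$ (because $u$ is surjective), its image is a subalgebra containing all algebra generators of $T(N)$, hence equals $T(N)$. Equivalently, on each graded piece $T^n(u) = u^{\otimes n}$ is surjective because the tensor product of surjective $A$-linear maps is surjective.

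For the kernel, let $K = \ker(u)$ and let $J \subset T(M)$ be the two-sided ideal generated by $K$. The inclusion $J \subseteq \ker(T(u))$ is immediate: every generator $k \in K \subset M = T^1(M)$ satisfies $T(u)(k) = u(k) = 0$, and $\ker(T(u))$ is itself a two-sided ideal (as the kernel of an algebra homomorphism), so it contains the ideal generated by $K$. For the reverse inclusion, I would pass to the quotient and construct an inverse. Let $q: T(M) \to T(M)/J$ be the quotient map. Since $J \supseteq K$, the composition $q|_M: M \to T(M)/J$ vanishes on $K$, so by the first isomorphism theorem for modules it induces an $A$-linear map $\widetilde u : N = M/K \to T(M)/J$. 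By the universal property of the tensor algebra, $\widetilde u$ extends uniquely to an $A$-algebra homomorphism $\Phi : T(N) \to T(M)/J$. On the other hand, since $T(u)(J) = 0$, the map $T(u)$ factors as $T(u) = \widetilde{T(u)} \circ q$ for a unique $A$-algebra homomorphism $\widetilde{T(u)} : T(M)/J \to T(N)$.

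It then remains to verify that $\Phi$ and $\widetilde{T(u)}$ are mutually inverse algebra homomorphisms. Both compositions are $A$-algebra endomorphisms, so by the universal property it suffices to check they act as the identity on algebra generators. For $y \in N$, choose $x \in M$ with $u(x) = y$; then $\Phi(y) = [x]$ and $\widetilde{T(u)}([x]) = u(x) = y$, so $\widetilde{T(u)} \circ \Phi = \mathrm{id}_{T(N)}$ on $N$, hence on all of $T(N)$. In the other direction, for $[x] \in T(M)/J$ with $x \in M$, the element $\Phi(\widetilde{T(u)}([x])) = \Phi(u(x))$ equals $[x']$ for any lift $x'$ of $u(x)$; since $x - x' \in K \subseteq J$, we get $[x'] = [x]$, so $\Phi \circ \widetilde{T(u)} = \mathrm{id}$ on the generators in degree one (and trivially on $A$). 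Consequently $\widetilde{T(u)}$ is an isomorphism, which means $\ker(T(u)) = J$.

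The main obstacle is the $(\supseteq)$ direction of the kernel equality: one must avoid the temptation to choose a set-theoretic section of $u$ (which generally does not exist $A$-linearly) and instead exploit the universal property by working with the quotient $T(M)/J$, where the induced map out of $N$ is automatically well-defined and canonical. Once this construction is in place, the verification that the two algebra homomorphisms are mutually inverse reduces to a check on degree-one generators.
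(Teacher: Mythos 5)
Your argument is correct. Note, however, that the paper gives no proof of this proposition at all: it is quoted from Bourbaki (Algebra, Ch.~III) together with the other tensor- and symmetric-algebra facts, so there is no in-paper proof to compare against. Your route is the standard universal-property one: after the easy inclusion $J\subseteq\ker T(u)$, you build the $A$-algebra map $\Phi\colon T(N)\to T(M)/J$ from the induced linear map $\widetilde u\colon N=M/K\to T(M)/J$ and check on degree-one generators that $\Phi$ and the induced map $\widetilde{T(u)}$ are mutually inverse; this is sound, correctly sidesteps the nonexistence of an $A$-linear section of $u$, and as a bonus delivers the isomorphism $T(M)/J\simeq T(N)$, which is exactly the content of the proposition's truncated last sentence (``It follows that we have a[n] $A$-algebra epimorphism'' $T(M)\to T(N)$ inducing this isomorphism). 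Bourbaki's own proof is instead a graded computation: surjectivity of $u^{\otimes n}$ in each degree, plus the lemma that for surjective maps the kernel of $u^{\otimes n}$ is $\sum_{p} M^{\otimes p}\otimes K\otimes M^{\otimes(n-p-1)}$, which is precisely the degree-$n$ component of the two-sided ideal $J$. The graded route gives slightly finer information (an explicit description of $\ker T(u)$ in each degree), while yours is shorter and transfers verbatim to the symmetric-algebra analogue (Proposition 3.3.7 in the paper) with ``two-sided ideal'' replaced by ``ideal''.
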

\begin{defn}[Symmetric algebra]
	Let $A$ be a ring and $M$ an $A$-module. The \textbf{symmetric algebra} of $M$, denoted by $S(M)$ is  the quotient algebra over $A$ of the tensor algebra $T(M)$ by the two-sided ideal generated by $x \otimes y - y \otimes x$ of $T(M)$, where  $x$ and $y$ run through $M$.\\
 If $R$ a ring and $I \subset R$  an ideal, the symmetric algebra of the ideal $I$, denoted by $\mathcal{S}_R(I)$, is the symmetric algebra of $I$ as an $R$-module. 
\end{defn}
\begin{pro}\cite{Bourbaki1-3}
	Let $E$ be an $A$-algebra and $f:M\to E$ an $A$-linear mapping such that $$ f(x) f(y) = f(y)f(x) \, \text{ for all } x,y \in M.$$
	There exists a unique $A$-algebra homomorphism $g: S(M) \to E $ such that $f=g \circ i$, where $i$ is the canonical injection $M \to S(M)$.
\end{pro}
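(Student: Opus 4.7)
The plan is to derive the result as a direct consequence of the universal property of the tensor algebra stated just above, combined with passage to the quotient.

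First, I would apply the universal property of $T(M)$ to the $A$-linear map $f : M \to E$. This produces a unique $A$-algebra homomorphism $\tilde{g} : T(M) \to E$ with $\tilde{g} \circ i_T = f$, where $i_T : M \to T(M)$ is the canonical injection into the tensor algebra. The goal is then to show that $\tilde{g}$ descends to an $A$-algebra homomorphism on the quotient $S(M) = T(M)/J$, where $J$ is the two-sided ideal generated by the elements $x \otimes y - y \otimes x$ for $x, y \in M$.

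The key step is to verify $J \subset \ker(\tilde{g})$. Since $\tilde{g}$ is an $A$-algebra homomorphism, it suffices to check that $\tilde{g}$ vanishes on the generators $x \otimes y - y \otimes x$. But $\tilde{g}(x \otimes y) = \tilde{g}(i_T(x)) \tilde{g}(i_T(y)) = f(x) f(y)$, and similarly $\tilde{g}(y \otimes x) = f(y) f(x)$. The hypothesis $f(x)f(y) = f(y)f(x)$ for all $x, y \in M$ then gives $\tilde{g}(x \otimes y - y \otimes x) = 0$. Since the kernel of an algebra homomorphism is a two-sided ideal, it contains the entire two-sided ideal $J$ generated by these elements. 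Hence $\tilde{g}$ factors uniquely as $\tilde{g} = g \circ \pi$ for an $A$-algebra homomorphism $g : S(M) \to E$, where $\pi : T(M) \to S(M)$ is the quotient map. The canonical injection $i : M \to S(M)$ is by definition $\pi \circ i_T$, so $g \circ i = g \circ \pi \circ i_T = \tilde{g} \circ i_T = f$, as required.

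For uniqueness, I would argue that $S(M)$ is generated as an $A$-algebra by $i(M)$, since $T(M)$ is generated as an $A$-algebra by $i_T(M)$ and $\pi$ is surjective. Any two $A$-algebra homomorphisms $S(M) \to E$ that agree on $i(M)$ must therefore agree everywhere, which forces $g$ to be unique. I expect no real obstacles here; the main conceptual point is simply recognizing that the commutativity hypothesis on $f$ is exactly what is needed to kill the defining relations of $S(M)$, and everything else is a formal consequence of the tensor algebra's universal property and the universal property of quotients.
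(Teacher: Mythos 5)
Your proof is correct and is exactly the standard argument: the paper itself gives no proof, citing Bourbaki, and your derivation (factor the tensor-algebra map through the quotient by the ideal generated by $x \otimes y - y \otimes x$, using the commutativity hypothesis to kill the generators, with uniqueness from the fact that $i(M)$ generates $S(M)$ as an $A$-algebra) is precisely the argument found there. No gaps.
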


\begin{pro}\cite{Bourbaki1-3}
	Let $A$ be a ring, $M$ and $N$ two $A$-modules and 
	$$ u:M \to N $$
	an  $A$-linear mapping. There exists a unique $A$-algebra homomorphism 
	$$S(u): S(M) \to S(N)$$ such that the diagram
	$$
	\begin{tikzcd}
	M \arrow{r}{u} \arrow[swap]{d}{i_M} & N \arrow{d}{i_N} \\%
	S(M) \arrow{r}{S(u)}& S(N)
	\end{tikzcd}
	$$ is commutative. Moreover, the homomorphism $S(u)$ is a graded algebra homomorphism.
	\label{symmetric_algebra}
\end{pro}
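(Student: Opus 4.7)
The plan is to invoke the universal property of the symmetric algebra stated in the immediately preceding proposition. First I would consider the $A$-linear composition $f = i_N \circ u : M \to S(N)$. Since $S(N)$ is a commutative $A$-algebra (by construction, as the quotient of $T(N)$ by the two-sided ideal generated by $x \otimes y - y \otimes x$), for any $x, y \in M$ we have $f(x)f(y) = i_N(u(x)) \cdot i_N(u(y)) = i_N(u(y)) \cdot i_N(u(x)) = f(y)f(x)$. Thus $f$ satisfies the commutativity hypothesis of the preceding proposition, which produces a unique $A$-algebra homomorphism $g : S(M) \to S(N)$ with $f = g \circ i_M$. Setting $S(u) := g$ gives both existence and uniqueness of an $A$-algebra homomorphism fitting into the claimed commutative square.

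Next I would verify that $S(u)$ respects the grading. The symmetric algebra inherits a grading $S(M) = \bigoplus_{n \geq 0} S^n(M)$ from the tensor algebra, with $S^n(M)$ generated as an $A$-module by products $i_M(x_1) \cdots i_M(x_n)$ of $n$ elements coming from $M$. On such a generator, by the algebra-homomorphism property of $S(u)$ together with the identity $S(u) \circ i_M = i_N \circ u$, we compute
\[
S(u)\bigl(i_M(x_1) \cdots i_M(x_n)\bigr) = S(u)(i_M(x_1)) \cdots S(u)(i_M(x_n)) = i_N(u(x_1)) \cdots i_N(u(x_n)),
\]
which lies in $S^n(N)$. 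Since these products generate $S^n(M)$ as an $A$-module, $S(u)(S^n(M)) \subseteq S^n(N)$, so $S(u)$ is a graded algebra homomorphism.

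The only subtlety I anticipate is ensuring that the preceding universal-property proposition is really applicable, i.e.\ that the $A$-algebra structure on $S(N)$ is indeed commutative so that the hypothesis $f(x)f(y)=f(y)f(x)$ holds for all $x,y \in M$; this follows at once from the definition of $S(N)$ as a quotient of $T(N)$ by the relations $x \otimes y - y \otimes x$. Apart from this observation, the argument is a direct application of universal properties and requires no further calculation.
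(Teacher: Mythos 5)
Your argument is correct: composing $u$ with the canonical injection $i_N$ gives an $A$-linear map into the commutative algebra $S(N)$, so the universal property in the preceding proposition yields existence and uniqueness of $S(u)$, and checking the grading on products of elements of $i_M(M)$ is exactly the right verification. The paper itself gives no proof (it cites Bourbaki), and your route is the standard one used there, so there is nothing to add.
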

\begin{pro}\cite{Bourbaki1-3}
	If $u:M \to N$ is a surjective $A$-linear mapping, the homomorphism $S(u) : S(M) \to S(N)$ is surjective and its kernel is the   ideal  of $S(M)$ generated by  the submodule $ker(u) \subset M \subset S(M)$.\label{surjection}
\end{pro}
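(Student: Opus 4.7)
The plan is to handle surjectivity and the kernel computation separately, reducing the second part to an application of the universal property from the proposition just above the statement.

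For surjectivity, I would use that $S(N)$ is generated as an $A$-algebra by $i_N(N)$. Commutativity of the diagram furnished by Proposition \ref{symmetric_algebra} gives $S(u)(i_M(m)) = i_N(u(m))$ for all $m \in M$, so $S(u)(i_M(M)) = i_N(u(M)) = i_N(N)$ by surjectivity of $u$. Thus the image of $S(u)$ is a subalgebra of $S(N)$ containing a generating set, hence equals $S(N)$.

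Let $\mathfrak{a}$ denote the ideal of $S(M)$ generated by $\ker(u) \subset M \subset S(M)$. The inclusion $\mathfrak{a} \subset \ker(S(u))$ is immediate: each generator $x \in \ker(u)$ satisfies $S(u)(x) = i_N(u(x)) = 0$, and $\ker(S(u))$ is an ideal. Hence $S(u)$ descends to a surjective $A$-algebra homomorphism $\overline{S(u)} : S(M)/\mathfrak{a} \to S(N)$, and it remains to build an inverse. For this I would form the $A$-linear composition $v : M \xrightarrow{i_M} S(M) \to S(M)/\mathfrak{a}$, which vanishes on $\ker(u)$ by construction of $\mathfrak{a}$. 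Since $u$ is surjective, $v$ factors uniquely through a well-defined $A$-linear map $\tilde{v} : N \to S(M)/\mathfrak{a}$. The codomain is a commutative $A$-algebra, so the universal property of the symmetric algebra lifts $\tilde{v}$ to an $A$-algebra homomorphism $g : S(N) \to S(M)/\mathfrak{a}$.

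To finish, I would check that $g \circ \overline{S(u)}$ and $\overline{S(u)} \circ g$ agree with the identity on the $A$-algebra generating subsets $i_M(M) \bmod \mathfrak{a}$ and $i_N(N)$ respectively, and invoke the uniqueness clauses of the universal properties to conclude that both composites are the identity. The one genuinely delicate step is the well-definedness of $\tilde{v}$: this is exactly the place where the surjectivity of $u$ together with the containment $\ker(u) \subset \mathfrak{a}$ is used, and the rest of the argument is formal manipulation of universal properties.
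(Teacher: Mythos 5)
The paper offers no proof of this proposition (it is quoted from Bourbaki), so your argument stands on its own, and it is correct. Surjectivity follows as you say because $i_N(N)$ generates $S(N)$ as an $A$-algebra and $S(u)\circ i_M=i_N\circ u$; the inclusion $\mathfrak{a}\subset\ker S(u)$ is immediate; and your construction of the inverse of $\overline{S(u)}$ is sound: the delicate point you identify, well-definedness of $\tilde v\colon N\to S(M)/\mathfrak{a}$, is exactly where surjectivity of $u$ (so $N\cong M/\ker u$) together with $\ker u\subset\mathfrak{a}$ enters, and since $S(M)/\mathfrak{a}$ is commutative the commutation hypothesis in the universal property of $S(N)$ is automatic, so the lift $g$ exists and the check on algebra generators plus uniqueness finishes the proof. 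A route closer to the paper's own build-up would be to deduce the claim from the preceding proposition on tensor algebras: $T(u)\colon T(M)\to T(N)$ is surjective with kernel the two-sided ideal generated by $\ker u$, and since $T(u)$ carries the ideal generated by the elements $x\otimes y-y\otimes x$ of $T(M)$ onto the corresponding ideal of $T(N)$, passing to the quotients gives surjectivity of $S(u)$ and identifies its kernel as the image in $S(M)$ of $\ker T(u)$, i.e.\ the ideal generated by $\ker u$. That derivation reuses the tensor-algebra statement; your direct universal-property argument avoids relying on it at essentially no extra cost, so both are legitimate.
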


\begin{defn}[Rees Algebra]
    Let $I$ be an ideal of a ring $R$. Consider the subring of the polynomial ring $R[t]$ generated by the elements $\sum_{i=0}^n c_i t^i$ such that $c_i \in I^i$ (we set $c_0 \in I^0 = R$ and $c_1 \in I^1 = I$), we call it the \textbf{Rees Algebra} of the ideal $I$, and  we denote it by $\mathcal{R}_R(I).$ If the ideal $I$ is finitely generated by elements $a_1 , \dots, a_m$, then  $\mathcal{R}_R(I) = R[a_1 t, \dots, a_m t].$ 
\end{defn}

The following remarks show how symmetric algebra and Rees algebra are represented in practice.
\begin{rem}
	Let $A$ be a ring and $I \subset A$ an ideal. From the injection $i : I\to R$ and Proposition \ref{symmetric_algebra}, we obtain an $A$-algebra homomorphism $S(i) : S(I) \to S(R) = R[t]$, $a_1 \otimes \cdots \otimes a_n \mapsto a_1 \cdots a_n t^n.$ It is clear that $\operatorname{Im}(S(i)) = \mathcal{R}_R(I)$, the Rees algebra of the ideal $I$. Suppose that  $I=\langle a_1, \dots , a_m \rangle $. The canonical epimorphism $R^m \to I$ induces a surjection  $R[t_1,\dots,t_m] = S(R^m) \to \mathcal{S}_R(I)$ by Proposition \ref{surjection}, whose kernel is the ideal $\mathfrak{p} \subset K[t_1,\dots,t_m]$ generated by the linear forms $\sum_{i=1}^m c_i t_i$ such that $\sum_{i=0}^m c_i a_i = 0$ by Proposition \ref{surjection} also. It follows that $$ \mathcal{S}_R(I) \simeq R[t_1,\dots,t_m]/\mathfrak{p}.$$ \label{ree} 
 The ideal $\mathfrak{p}$ is called \textbf{the defining ideal of the symmetric algebra} $\mathcal{S}_R(I).$
\end{rem}
\begin{rem}
	On the other hand, we have the canonical epimorphism  $R[t_1,\dots,t_m] \to \mathcal{R}_R(I), \, t_i \mapsto a_i t$ whose the kernel  is the ideal $\mathfrak{q} \subset K[t_1,\dots,t_m]$ generated by  the  homogeneous polynomials $F \in R[t_1,\dots,t_m]$ such that $F(a_1,\dots,a_m) = 0$. This leads to 
	$$ \mathcal{R}_R(I) \simeq R[t_1,\dots,t_m]/\mathfrak{q}. $$ 
	\label{ree1} 
\end{rem}

\begin{rem}
	Let $I,J \subset R[t_1,\dots,t_m]$ be ideals with generating systems
	$I=\langle a_1,\dots,a_m \rangle $ and $J = \langle b_1, \dots, b_m\rangle.$ If we have the isomorphism
	$$ \mathcal{R}_R(I) \simeq \mathcal{R}_R(J), \text{ by $a_i t  \mapsto b_i t$}$$ 
	then by the previous remark, we get 
	$$ R[t_1,\dots,t_m]/\mathfrak{q}_I \simeq R[t_1,\dots,t_m]/\mathfrak{q}_J.$$
	This isomorphism means that $ \mathfrak{q}_I = \mathfrak{q}_J$. We conclude that the conditions $F(a_1,\dots,a_m) =0 $ and $F(b_1,\dots,b_m) = 0 $  are equivalent for all $F \in R[t_1,\dots,t_m]$ homogeneous polynomial. By Remark~\ref{ree}, if follows that 
	$$ S(I) \simeq S(J).$$
\end{rem}

The following result will be used in the proof of the main criterion of birationality.
\begin{pro} \label{prop:power}
	Let $\varGamma = ker (\mathcal{S}_R (I)  \rightarrow \mathcal{R}_R (I)  )$, then there exists $s \in \mathbb{N}$ such that $I^s \varGamma = 0$.
\end{pro}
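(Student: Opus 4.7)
The strategy is to use the explicit presentation of $\mathcal{S}_R(I)$ from Remark \ref{ree}: fix generators $a_1,\ldots,a_m$ of $I$ and identify $\mathcal{S}_R(I) = R[t_1,\ldots,t_m]/\mathfrak{p}$ and $\mathcal{R}_R(I) = R[t_1,\ldots,t_m]/\mathfrak{q}$, so that $\varGamma = \mathfrak{q}/\mathfrak{p}$ as a graded ideal of $\mathcal{S}_R(I)$. The decisive observation is that the trivial commutation relations $a_j\cdot a_i - a_i\cdot a_j = 0$ in $R$ produce linear relations on the generators of $I$, and hence the linear forms $a_jt_i - a_it_j$ belong to $\mathfrak{p}$. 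Consequently $a_it_j \equiv a_jt_i \pmod{\mathfrak{p}}$ for all $i,j$.

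Starting from this congruence, I would establish by a short induction on $|\alpha|$ the key identity
\[
a_k^{|\alpha|}\, t^{\alpha} \;\equiv\; a^{\alpha}\, t_k^{|\alpha|} \pmod{\mathfrak{p}}
\]
for every multi-index $\alpha$ and every $k\in\{1,\ldots,m\}$, by peeling off one $t_j$ at a time. Now take any homogeneous $\gamma \in \varGamma$ of degree $d$ and lift it to $F(t) = \sum_{|\alpha|=d}c_\alpha t^\alpha \in \mathfrak{q}$. By the description of $\mathfrak{q}$ in Remark \ref{ree1}, homogeneity forces $F(a_1,\ldots,a_m)=0$ in $R$. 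Multiplying coefficient-wise by $a_k^d$ and applying the identity gives
\[
a_k^d \cdot F(t) \;\equiv\; \sum_{|\alpha|=d} c_\alpha \, a^{\alpha}\, t_k^d \;=\; F(a_1,\ldots,a_m) \cdot t_k^d \;=\; 0 \pmod{\mathfrak{p}},
\]
so every homogeneous element of $\varGamma$ of degree $d$ is annihilated by $a_k^d$ for each generator $a_k$ of $I$.

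To obtain a uniform exponent $s$, I would invoke that $R$ is Noetherian (which is implicit in the paper's setting), so $\mathcal{S}_R(I)$ is Noetherian and $\varGamma$ is a finitely generated ideal. Pick homogeneous generators $\gamma_1,\ldots,\gamma_r$ of $\varGamma$ and let $d$ be the maximum of their degrees, so that $a_k^d\gamma_i = 0$ for all $i,k$. A pigeonhole on monomials in $I^{m(d-1)+1}$ shows that any $a^\beta$ with $|\beta|=m(d-1)+1$ has some $\beta_k \geq d$, so $a^\beta \gamma_i = 0$ for every $i$. Since the $\gamma_i$ generate $\varGamma$ as an ideal of $\mathcal{S}_R(I)$, every $x \in I^{m(d-1)+1} \subset R$ annihilates an arbitrary element $\sum h_i \gamma_i$ of $\varGamma$, yielding the conclusion with $s = m(d-1)+1$.

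The main obstacle I anticipate is precisely the last passage: the degree-graded annihilations $a_k^d\gamma = 0$ have $d$ growing with $\deg \gamma$, and producing a single uniform $s$ depends essentially on $\varGamma$ being finitely generated in bounded degree. This is where the Noetherian hypothesis enters in a non-cosmetic way; without it the direct pigeonhole bound would blow up and the statement would fail in general.
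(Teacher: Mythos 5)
Your proof is correct, but it reaches the crucial degree-wise annihilation by a genuinely different route than the paper. The paper inducts on the degree of an individual homogeneous $f\in\mathfrak{q}$: it writes $f=\sum_i t_i f_i$, forms the degree-one element $g=\sum_i t_i f_i(a_i,\dots,a_n)\in\mathfrak{p}$, studies $a_n^{q-1}f-t_n^{q-1}g$, and recursively accumulates exponents, ending with an annihilating power of $I$ that depends on $f$ in a rather implicit way. You instead start from the Koszul-type linear forms $a_jt_i-a_it_j\in\mathfrak{p}$, prove $a_k^{|\alpha|}t^\alpha\equiv a^\alpha t_k^{|\alpha|}\pmod{\mathfrak{p}}$, and conclude that $a_k^d$ kills the entire degree-$d$ part of $\varGamma$ since $a_k^dF\equiv F(a_1,\dots,a_m)t_k^d=0$; this is cleaner, gives an explicit exponent equal to the degree, and leads to the effective bound $s=m(d-1)+1$ once $d$ bounds the degrees of generators. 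On the passage from element-wise to uniform annihilation, note that the paper's proof has exactly the same dependence you flag: it only shows that each homogeneous $f$ is killed by some power of $I$, and the bridge to a single $s$ (via finite generation of $\mathfrak{q}$, i.e.\ Noetherianness of $R$, which holds in the paper's application where $R$ is a quotient of a polynomial ring over a field) is left implicit there, whereas you state it and use it explicitly — so your treatment of that step is, if anything, more careful than the original.
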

\begin{proof}\cite{micali1964algebres}
	Remarks \ref{ree} and \ref{ree1} show that 
	$ \varGamma \simeq \mathfrak{q} / \mathfrak{p}$. We need  to show that there is $s \in \mathbb{N} $ such that $I^s \mathfrak{q} \subset \mathfrak{p}$. Assume that $I = \langle a_1, \dots a_n \rangle  \subset R$  where $0 \neq a_i \in R$. 
	
	Let $f \in \mathfrak{q}$ homogeneous. We will show by induction on the degree of 
	$f $ that there is $s \in \mathbb{N}$ such that $I^s f \in \mathfrak{p}.$\\
	Suppose that the degree of the polynomial $f$ is $1$, it follows that $f \in \mathfrak{p}$ and any $s \geq 0 $ will satisfy requirement. Now, suppose that for each homogeneous polynomial in $\mathfrak{q}$ of degree less or equal to $q-1$, the statement is true. Assume $f$ has degree $q$. Write 
	$$ f = t_1 f_1(t_1,\dots,t_n) + t_2 f_2(t_2,\dots , t_n) + \cdots + t_n f_n(t_n)$$
	with $f_i \in R$ homogeneous of degree $q-1$. Consider the polynomial 
	$$g =  t_1 f_1(a_1,\dots,a_n) + t_2 f_2(a_2,\dots , t_n) + \cdots + t_n f_n(a_n).$$
	It is clear that $g $ is a homogeneous polynomial of degree $1$ in $\mathfrak{q}$, so $g$ is in $\mathfrak{p}$. On the other hand, 	
	$$ {a_n}^{q-1}f - {t_n}^{q-1} g= t_1 g_1(t_1,\dots,t_n) + t_{n-1} g_{n-1}(t_{n-1},t_n),$$
	where $g_i \in R$ homogeneous of degree $q-1$. By assumption, there are $s_i \in \mathbb{N}$ such that $I^{s_i} g_i \in \mathfrak{p}$. Set $$v = \operatorname{max}(\{s_1,\dots,s_{n-1}, n+1 \})$$. It follows that $I^v {a_n}^{q-1} f \in \mathfrak{p}$. 
	By varying $1 \leq i \leq n-1$, there is a $ n < p \in \mathbb{N}$  such that $I^p {a_i}^{q-1} f \in \mathfrak{p}$.
	This  implies that $I^{s} f \in \mathfrak{p}$ with $s= p(q-1)$
\end{proof}

Defining the inverse of a rational map, if it exists, is equivalent to give a representative of that rational map defined by homogeneous polynomials of the same degree. The following proposition shows how to compute all the representatives of a rational map.

\begin{pro}\cite{Simis2004} \label{prop:domain_module}
Let $K$ be an algebraically closed field, $I \subset K[t_0,\dots,t_m]$ a homogeneous prime ideal and
$$\begin{array}{lcll}
\Phi : & X = V(I) \subset \mathbb{P}_K^m & \dashrightarrow & \mathbb{P}_K^n\\
& t = (t_0:{\dots}:t_m) & \longmapsto & (\overline{f_0}(t):{\dots}: \overline{f_n}(t))
\end{array}
$$
a rational map with $\overline{f_i} \in K[t_1,\dots,t_m]/I$ homogeneous of  the
same degree.\newline
    Set $R = K[t_0,\dots,t_m]/I$   and $J = \langle \overline{f_0},\dots, \overline{f_n} \rangle \subset R$ a homogeneous ideal.
    Consider the minimal free graded presentation of $J$
    \begin{equation}
    \label{eq:presentation}
        \begin{tikzcd} 
	\displaystyle\bigoplus\limits_s R(-d_s) \arrow{r}{\varphi} & R^{m+1}(-d) \arrow{r}{(\overline{f_0} \, \cdots \, \overline{f_n)}} & J\arrow{r} & 0.
	\end{tikzcd}
    \end{equation}
 Then, the set of all the representatives of $\Phi$ is in one-to-one correspondence  with the set of homogeneous vectors in the $R$-module $\operatorname{ker}(\varphi^t).$
\end{pro}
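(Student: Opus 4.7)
The plan is to interpret $\ker(\varphi^t)$ as the $R$-module of tuples $(\overline{g_0},\ldots,\overline{g_n})\in R^{n+1}$ annihilating every syzygy of $(\overline{f_0},\ldots,\overline{f_n})$, and to match this with the characterization of representatives of $\Phi$ by vanishing of all $2\times 2$ minors $\overline{f_i}\,\overline{g_j}-\overline{f_j}\,\overline{g_i}$.

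First I would recall that, since $I$ is prime and $X$ is irreducible, a tuple $(\overline{g_0},\ldots,\overline{g_n})$ of homogeneous elements of $R$ of a common degree represents $\Phi$ if and only if $\overline{f_i}\,\overline{g_j}-\overline{f_j}\,\overline{g_i}=0$ in $R$ for all $i,j$. Indeed, this is the projective proportionality condition on the non-empty open locus of $X$ where neither tuple vanishes, and Proposition~\ref{pro:uniqueness}, together with Remark~\ref{rem:homogeneous2}, upgrades equality on a dense open to equality of the corresponding rational maps.

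Next I would unpack the definition of $\ker(\varphi^t)$. Writing the columns of $\varphi$ as the minimal generators of the syzygy module of $(\overline{f_0},\ldots,\overline{f_n})$, the membership $(\overline{g_0},\ldots,\overline{g_n})\in\ker(\varphi^t)$ is equivalent to $\sum_i\overline{a_i}\,\overline{g_i}=0$ for every syzygy $(\overline{a_0},\ldots,\overline{a_n})$. For the implication $\ker(\varphi^t)\Rightarrow$ representative I would apply this to the Koszul-type syzygies $\overline{f_j}\,e_i-\overline{f_i}\,e_j$, each of which is obviously a syzygy and hence lies in $\operatorname{im}(\varphi)$, giving $\overline{f_j}\,\overline{g_i}-\overline{f_i}\,\overline{g_j}=0$. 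For the converse, given a representative and an arbitrary syzygy $(\overline{a_0},\ldots,\overline{a_n})$, I would fix $j$ with $\overline{f_j}\neq 0$ (possible because $J\neq 0$), multiply $\sum_i\overline{a_i}\,\overline{f_i}=0$ by $\overline{g_j}$, substitute $\overline{f_i}\,\overline{g_j}=\overline{f_j}\,\overline{g_i}$, and cancel the non-zero factor $\overline{f_j}$ in the domain $R$ to obtain $\sum_i\overline{a_i}\,\overline{g_i}=0$.

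The main obstacle is the bookkeeping of graded shifts: because $\varphi$ is a map of graded $R$-modules with target $R^{n+1}(-d)$, its transpose $\varphi^t$ is also graded, and one has to verify that a homogeneous element of $\ker(\varphi^t)$ of internal degree $e$ corresponds to a tuple of $R$-forms sharing one common degree depending on $e$ and $d$. This is exactly the homogeneity required by Proposition~\ref{prop:homogeneouspoly} for the tuple $(\overline{g_0}:\cdots:\overline{g_n})$ to define a morphism to $\mathbb{P}^n_K$, so the identification of homogeneous vectors of $\ker(\varphi^t)$ with representatives of $\Phi$ yields the desired bijection.
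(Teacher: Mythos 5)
Your proof is correct, but it takes a genuinely different route from the paper's. Both arguments begin with the same reduction: a homogeneous tuple $(\overline{g_0},\dots,\overline{g_n})$ of a common degree represents $\Phi$ exactly when the $2\times 2$ minors of the matrix formed with $(\overline{f_0},\dots,\overline{f_n})$ vanish in $R$ (using that $I$ is prime, so vanishing on a dense open subset of $X$ forces vanishing in $R$). From there the paper argues structurally: the proportionality over $\operatorname{Frac}(R)$ identifies representatives with homogeneous elements $q$ of the fractional ideal $R:_{\operatorname{Frac}(R)}J\simeq\operatorname{Hom}(J,R)$, and applying $\operatorname{Hom}(-,R)$ to the presentation \eqref{eq:presentation} and using left-exactness of $\operatorname{Hom}$ yields $\operatorname{Hom}(J,R)\simeq\ker(\varphi^t)$. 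You instead unwind $\ker(\varphi^t)$ concretely as the set of vectors pairing to zero with every syzygy of $(\overline{f_0},\dots,\overline{f_n})$, getting one inclusion from the Koszul syzygies $\overline{f_j}e_i-\overline{f_i}e_j$ (which lie in $\operatorname{im}(\varphi)$) and the other by multiplying an arbitrary syzygy relation by $\overline{g_j}$, substituting the minor relations, and cancelling a nonzero $\overline{f_j}$ in the domain $R$. The paper's route buys the module-theoretic identification $\operatorname{Hom}(J,R)\simeq\ker(\varphi^t)$, which is the form reused later (e.g.\ in the proof of Theorem \ref{thm:main_criterion}); your route is more elementary and self-contained, avoiding fractional ideals at the cost of an explicit Koszul/cancellation computation, and your remark on the graded shifts is a point the paper leaves implicit. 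Both proofs use essentially that $R$ is a domain, and both share the same small gloss: the zero vector is a homogeneous element of $\ker(\varphi^t)$ but is not a representative, so strictly the bijection should be with nonzero homogeneous vectors — a defect of the statement rather than of either argument.
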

\begin{proof}
    Let $(\overline{f'_0}, \dots, \overline{f'_n})$ be another representative of the rational map $\Phi$. We have 
    $$ \text{minors}_2\begin{pmatrix}
\overline{f_0}  & \cdots & \overline{f_n}\\
\overline{f'_0}  & \cdots & \overline{f'_n}
\end{pmatrix} = 0.$$
This means that the two vectors $(\overline{f_0},\dots,\overline{f_n})$ and $(\overline{f'_0},\dots,\overline{f'_n})$ in $\text{Frac}\left(R^{n+1}\right)$ are  proportional by a homogeneous factor of $R$.  This leads to a correspondence between a representative of the rational map $\Phi$ and a homogeneous element $q \in \text{Frac}(R)$ such that $qJ \subset R$. The later defines the fractional ideal $R:_{\text{Frac}(R)} J \simeq \text{Hom}(J,R) $. By dualizing the exact sequence \eqref{eq:presentation} with respect to $R$, we obtain the exact sequence
 \[
 \begin{tikzcd} 
	0 \arrow{r} & \text{Hom}(J,R) \arrow{r}& \text{Hom}(R^{m+1}(-d),R) \arrow{d}{\varphi^t}  \\
 & &\text{Hom}(\displaystyle\bigoplus\limits_s R(-d_s),R).
	\end{tikzcd} \]
 The exactness on $\text{Hom}(R^{m+1}(-d),R)$ yields $\text{Hom}(J,R) \simeq \text{Ker}(\varphi^t)$
\end{proof}

The next proposition states a way to check if two rational map are inverse to each other by comparing Rees Algebras.

\begin{pro}\cite{Simis2004} \label{pro:birational}
Let $X \subset \mathbb{P}^m_K$ and $Y \subset \mathbb{P}^n_K$ two irreducible projective varieties. Let $\Phi : X \dashrightarrow \mathbb{P}^n_K$ and $\Psi : Y \dashrightarrow \mathbb{P}^m_K$ be rational maps with images $Y$ and $X$, respectively. Let $(\overline{f_0},\dots,\overline{f_n} )$ and $(\overline{g_0},\dots, \overline{g_m})$ be  representatives of $\Phi$ and $\Psi$, respectively. Let $R$ and $S$ denote the respective homogeneous coordinate rings of $X$ and $Y.$  Then, the following are equivalent:
\begin{itemize}
    \item [(i)]$\Phi$ and $\Psi$ are inverse to each other.
    \item[(ii)] The identity map of $K[t_0,\dots,t_m,x_0,\dots,x_n]/\left(I(X),I(Y)\right) )$ induces a bigraded isomorphism 
    $$ \mathcal{R}_R \left( \frac{\langle f_0,\dots, f_n, I(X) \rangle}{I(X)}  \right)  \simeq \mathcal{R}_S \left( \frac{\langle g_0,\dots, g_m, I(Y) \rangle}{I(Y)}  \right)$$ of Rees algebras.
\end{itemize}
\end{pro}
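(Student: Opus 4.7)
The plan is to identify each Rees algebra with the bihomogeneous coordinate ring of the closure of the graph of the corresponding rational map, and then exploit the elementary fact that two rational maps are mutually inverse precisely when these graph closures coincide (after exchanging source and target coordinates) in $X\times Y$.

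First, using Remark~\ref{ree1}, I would write
$$\mathcal{R}_R(\langle\overline{f_0},\dots,\overline{f_n}\rangle) \simeq K[t_0,\dots,t_m,x_0,\dots,x_n]/\mathfrak{J}_\Phi,$$
where $\mathfrak{J}_\Phi$ contains $I(X)$ and, modulo $I(X)$, is generated by the bihomogeneous polynomials $F(t,x)\in K[t,x]$ which are homogeneous in $x$ and satisfy $F(t,f_0(t),\dots,f_n(t))\in I(X)$. Since $\overline{\operatorname{im}(\Phi)}=Y$, every element of $I(Y)$ lies in $\mathfrak{J}_\Phi$, so the identity on $K[t,x]/(I(X)+I(Y))$ factors through a surjection onto $\mathcal{R}_R$. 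I would then argue that $\mathfrak{J}_\Phi$ is the bihomogeneous prime ideal of $\overline{\Gamma(\Phi)}\subset X\times Y$: the Rees algebra is a domain, since it embeds in $R[T]$ and $R$ is a domain by irreducibility of $X$, and its $\operatorname{BiProj}$ coincides with the image of the graph morphism $(\operatorname{id}_X,\Phi):D(\Phi)\hookrightarrow X\times Y$, which by Proposition~\ref{prop:graph} has closure $\overline{\Gamma(\Phi)}$. Symmetrically, $\mathcal{R}_S(\langle\overline{g_0},\dots,\overline{g_m}\rangle)\simeq K[t,x]/\mathfrak{J}_\Psi$, where $\mathfrak{J}_\Psi$ is the bihomogeneous prime of $\overline{\sigma(\Gamma(\Psi))}\subset X\times Y$, with $\sigma$ swapping source and target coordinates.

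With these identifications in place, the identity on $K[t,x]/(I(X)+I(Y))$ induces a bigraded isomorphism $\mathcal{R}_R\simeq\mathcal{R}_S$ if and only if $\mathfrak{J}_\Phi=\mathfrak{J}_\Psi$, equivalently $\overline{\Gamma(\Phi)}=\overline{\sigma(\Gamma(\Psi))}$ in $X\times Y$. It then remains to show this graph-coincidence is equivalent to~(i). If $\Phi$ and $\Psi$ are mutual inverses, then over the dense open subset $U=D(\Phi)\cap\Phi^{-1}(D(\Psi))$ one has $(t,\Phi(t))=(\Psi(\Phi(t)),\Phi(t))\in\sigma(\Gamma(\Psi))$, and taking closures yields one inclusion; the reverse follows symmetrically. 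Conversely, if the closures agree, then on a dense open subset of $X$ the point $(t,\Phi(t))$ lies in $\sigma(\Gamma(\Psi))$, hence $\Psi(\Phi(t))=t$, so $\Psi\circ\Phi=\operatorname{id}_X$ as rational maps by Proposition~\ref{pro:uniqueness}; symmetrically $\Phi\circ\Psi=\operatorname{id}_Y$.

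The step I expect to be the main obstacle is the identification of $\mathfrak{J}_\Phi$ with the defining ideal of $\overline{\Gamma(\Phi)}$ above. A priori, $\mathfrak{J}_\Phi$ arises from a Rees-algebra presentation, whereas Proposition~\ref{prop:graph} describes the graph ideal via a saturation by $\langle f_0,\dots,f_n\rangle$. The gap between the symmetric-algebra presentation and the Rees-algebra presentation is controlled by Proposition~\ref{prop:power}: the kernel $\Gamma$ of $\mathcal{S}_R\to\mathcal{R}_R$ is annihilated by a power of $\langle\overline{f_0},\dots,\overline{f_n}\rangle$, which geometrically reflects precisely the fact that the saturation in Proposition~\ref{prop:graph} removes the components supported over the indeterminacy locus $V(\langle\overline{f_0},\dots,\overline{f_n}\rangle)\subset X$, leaving exactly $\overline{\Gamma(\Phi)}$.
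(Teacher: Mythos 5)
Your argument is essentially correct, but it cannot be compared step-by-step with the paper's proof for the simple reason that the paper gives none: it defers entirely to \cite{Simis2004}, Proposition 2.1. Your route --- identify $\mathcal{R}_R(\langle \overline{f_0},\dots,\overline{f_n}\rangle)$ with $K[t_0,\dots,t_m,x_0,\dots,x_n]/\mathfrak{J}_\Phi$ via Remark \ref{ree1}, note that $\mathfrak{J}_\Phi$ is prime (the Rees algebra embeds in $R[T]$, a domain) and is the bihomogeneous vanishing ideal of $\overline{\Gamma(\Phi)}$, use dominance to see that $I(X)+I(Y)\subseteq\mathfrak{J}_\Phi\cap\mathfrak{J}_\Psi$ so that ``the identity induces'' makes sense, and then translate condition (ii) into $\overline{\Gamma(\Phi)}=\sigma(\overline{\Gamma(\Psi)})$, which is equivalent to mutual inverseness by a dense-open argument together with Proposition \ref{pro:uniqueness} --- is self-contained and consonant with what the paper itself records in Remark \ref{rem:criterion}, namely that the defining ideal of the Rees algebra coincides with the graph ideal of Proposition \ref{prop:graph}. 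What your approach buys is a transparent geometric proof that does not require consulting \cite{Simis2004}; what the citation buys the paper is exactly the algebraic formulation (the identity map inducing a bigraded isomorphism) used verbatim in the proof of Theorem \ref{thm:main_criterion}. Two touch-ups: the bridge between the Rees presentation and the saturated ideal of Proposition \ref{prop:graph} is not really Proposition \ref{prop:power}, which concerns the kernel of the symmetric-to-Rees surjection; the relevant facts are that $(I+J):\langle f_0,\dots,f_n\rangle^\infty\subseteq\mathfrak{J}_\Phi$ because substituting $x_i=f_i(t)$ annihilates $J$ and $I(X)$ is prime, while the reverse inclusion is the Taylor-expansion step already carried out in the proof of Proposition \ref{prop:graph}. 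Also, when converting equality of graph closures into equality of ideals, say explicitly that $\mathfrak{J}_\Phi$ and $\mathfrak{J}_\Psi$ are prime, contain neither irrelevant ideal, and hence are the full bihomogeneous vanishing ideals of their zero sets; and in the converse direction, that $\Gamma(\Psi)$ is open (not merely dense) in its closure, so a generic point $(t,\Phi(t))$ lies in $\sigma(\Gamma(\Psi))$ itself rather than only in its closure. These are routine details, not gaps.
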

\begin{proof}
    See \cite{Simis2004} Proposition 2.1.
\end{proof}

\begin{rem}\cite{Simis2004} \label{rem:jacobian}
    In general, we can use the bigraded $K$-algebra $R[x_0,\dots,x_n] $ with $R = (K[t_0,\dots,t_m]/I(X)$ to express the Rees algebra $\mathcal{R}_R(\overline{f_0},\dots, \overline{f_n})$ as a residue algebra, for a rational map $X \subset \mathbb{P}^m_K \dashrightarrow \mathbb{P}^n_K$ with a representative $(\overline{f_0},\dots, \overline{f_n}).$ Indeed, this is done by using the $R$-algebra homomorphism $$R[x_0,\dots,x_n] \rightarrow  \mathcal{R}_R(\overline{f_0},\dots, \overline{f_n}),$$ mapping $x_i \text{ to } \overline{f_i}.$ Denote by $\mathcal{I}_f$ the kernel of this homomorphism, we call it the \textbf{defining ideal of } $\mathcal{R}_R(\overline{f_0},\dots, \overline{f_n})$. \\ Now, if $(\overline{f'_0},\dots,\overline{f'_n} )$ is another representative of the rational map, then $\mathcal{I}_f = \mathcal{I}_{f'}$. This can be explained by the equivalence of the two conditions $F(\overline{f_0},\dots,\overline{f_n}) = 0 $ and $F(\overline{f'_0},\dots,\overline{f'_n}) = 0$ for all $F \in R[x_0,\dots,x_n].$ The defining ideal associated to a rational map is independent of the representative of the rational map. We just denote it by $\mathcal{I}$ if there is no confusion.\\
    The ideal $\mathcal{I}$ is bihomogeneous. Thus, we can have  minimal generators,  which are constituted by bihomogeneous polynomials with various bidegree $(r,s)$ with $r,s \geq 1$. By Remark \ref{ree}, those of bidegree $(r,1)$ with $r \geq 1$ generates the symmetric algebra $S_R(\overline{f_0},\dots,\overline{f_n}).$\\
    Now, consider those generators of bidegree $(1,s)$ with $s \geq 1$. They generate an ideal of the form $\text{minors}_1( \psi \cdot {(t_0 \cdots t_m)}^t )$ where $\psi$ is a matrix with $m+1$ columns and entries in $K[x_0,\dots, x_n].$ The matrix $\psi$ can be seen as the Jacobian matrix respecting to the variables $t_0,\dots,t_n$ of the form obtained from the bidegree $(1,s)$ elements lifted to $K[t_0,\dots,t_m,x_0,\dots, x_n]$. The rows of $\psi$ are homogeneous vectors of ${(K[x_0,\dots,x_n])}^{n+1}.$ We mostly consider the matrix $\psi$ as matrix with entries over the homogeneous coordinate   ring $S= K[x_0,\dots,x_n]/I(Y)$ of the image $Y \subset \mathbb{P}^{n+1}_K.$
\end{rem}

\begin{defn}[Weak Jacobian Dual] \cite{Simis2004}
 For a rational map $\Phi$, the matrix $\psi$ defined above in Remark \ref{rem:jacobian} considered as a matrix over the coordinate ring of the image is  called a \textbf{weak Jacobian dual} matrix $\psi$ of  the rational map $\Phi$.
\end{defn}

\begin{rem}\cite{Simis2004}
    Clearly, a weak Jacobian dual matrix depends on the representative of the rational map. So, it is not uniquely defined. Nevertheless, for a fix bidegree $(1,s)$, the number of generators of bidegree $(1,s)$ in a minimal bihomogeneous generators of a defining ideal $\mathcal{I}$ are invariant. It is equal to the dimension of the spanned $K$-vector subspace of all the bidegree $(1,s)$ elements of $\mathcal{I}$.  That is, the size of the weak Jacobian dual is invariant in spite of being not unique.
\end{rem}

Now, we state the main criterion of birationality.
\begin{thm} \cite{Simis2004} \label{thm:main_criterion} Let $X \subset \mathbb{P}^m $ be an irreducible projective variety with the vanishing homogeneous prime ideal $I \subset K[t_0,\dots, t_m]$ and $F:X \mapsto \mathbb{P}^n $ a rational map with image $Y \subset \mathbb{P}^n$. Let $R= K[t_0,\dots,t_m]/I(X)$ and $S = K[x_0,\dots,x_n]/I(Y)$ the respective homogeneous coordinate rings of $X$ and $Y$. Then, the following statements are equivalent :
	\begin{itemize}
		\item[\textup{(i)}] The rational map $F$ is birational onto $Y$.
		\item[\textup{(ii)}] $\operatorname{dim}(R) = \operatorname{dim} (S)$, $F$ admits a weak Jacobian dual matrix $\psi$ such that $\operatorname{rank}_{\operatorname{Frac}(S)}(\psi) = m $ and $\operatorname{Im}(\psi^t) = {\left(\operatorname{Im}(\psi)\right)}^* .$
	\end{itemize}
Moreover, if we have the condition $(ii)$, then $\operatorname{ker}_S(\psi)$ is the $S$-module of representatives of the inverse map of $F$.
\end{thm}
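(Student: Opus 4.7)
The plan is to treat both directions through the defining ideal $\mathcal{I}$ of the Rees algebra $\mathcal{R}_R(\langle \overline{f_0},\ldots,\overline{f_n}\rangle)$ described in Remark \ref{rem:jacobian}, combined with the criterion from Proposition \ref{pro:birational} expressing birationality as a bigraded isomorphism of Rees algebras. The weak Jacobian dual $\psi$ comes from the bidegree $(1,s)$ generators of $\mathcal{I}$, which by construction satisfy $\psi\cdot(t_0,\ldots,t_m)^t \equiv 0 \pmod{\mathcal{I}}$; all the work consists in translating between $\psi$, the bigraded structure of $\mathcal{I}$, and the existence and properties of a candidate inverse.

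For the direction (i) $\Rightarrow$ (ii), assume $F$ is birational with inverse $G = (\overline{g_0}:\cdots:\overline{g_m})$. Birational equivalence yields $K(X)\cong K(Y)$, and taking transcendence degrees gives $\dim R=\dim S$. Proposition \ref{pro:birational} produces a bigraded isomorphism $\mathcal{R}_R(\langle\overline{f}\rangle)\cong\mathcal{R}_S(\langle\overline{g}\rangle)$ induced by the identity on $K[t,x]/(I(X),I(Y))$, which I would use to identify $\mathcal{I}_F$ and $\mathcal{I}_G$ as ideals of the same bigraded polynomial ring. The symmetry of this identification swaps the roles of bidegree $(1,*)$ and bidegree $(*,1)$ generators, so the $(1,*)$-part of $\mathcal{I}_G$ corresponds to a ``transposed'' presentation of the same content. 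Unpacking this symmetry produces a homogeneous vector $(\overline{g_0},\ldots,\overline{g_m})\in\ker_S(\psi)$, forcing $\operatorname{rank}_{\operatorname{Frac}(S)}(\psi)\le m$, while the inequality $\operatorname{rank}_{\operatorname{Frac}(S)}(\psi)\ge m$ follows because otherwise the entries of $\psi$ would collapse the Krull dimension of $R[x]/\mathcal{I}$ below the common value $\dim R=\dim S$. The equality $\operatorname{Im}(\psi^t)=(\operatorname{Im}(\psi))^*$ is then read off from the fact that the same isomorphism exhibits the $S$-dual of the image of $\psi$ as the module generated by the transposed relations, which are exactly those that appear as $(1,*)$-generators from the $G$-side.

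For the direction (ii) $\Rightarrow$ (i), the rank hypothesis forces $\ker_{\operatorname{Frac}(S)}(\psi)$ to be one-dimensional; using bigradedness of $\psi$ and the duality condition, I would produce a nonzero vector $(\overline{g_0},\ldots,\overline{g_m})\in\ker_S(\psi)$ with homogeneous entries of the same degree, and set $G=(\overline{g_0}:\cdots:\overline{g_m}):Y\dashrightarrow\mathbb{P}^m_K$. The relation $\psi\cdot(\overline{g_0},\ldots,\overline{g_m})^t=0$ produces bidegree $(1,*)$ relations in the defining ideal of $\mathcal{R}_S(\langle\overline{g}\rangle)$ matching those of $\mathcal{R}_R(\langle\overline{f}\rangle)$; the condition $\operatorname{Im}(\psi^t)=(\operatorname{Im}(\psi))^*$ is what guarantees that these relations, together with their transposed counterparts, generate exactly the same bigraded ideal from both sides, i.e.\ that the identity on $K[t,x]/(I(X),I(Y))$ descends to the required bigraded isomorphism of Rees algebras. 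Using Proposition \ref{prop:power} together with $\dim R=\dim S$ eliminates any possible discrepancy between symmetric and Rees algebras at the level of vanishing loci, so Proposition \ref{pro:birational} concludes birationality. Finally, the moreover statement follows because the construction shows that every homogeneous vector in $\ker_S(\psi)$ yields a representative of the inverse, and conversely any representative of the inverse lies in $\ker_S(\psi)$ by Proposition \ref{prop:domain_module}.

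The main obstacle is the precise manipulation of the dualization condition $\operatorname{Im}(\psi^t)=(\operatorname{Im}(\psi))^*$: identifying both sides inside $S^{m+1}$ via the natural pairing and converting this equality into the matching of two sets of bihomogeneous generators of $\mathcal{I}$ requires delicate homological bookkeeping, and is the reason the detailed argument is deferred to \cite{Simis2004}.
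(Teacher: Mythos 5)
Your proposal follows the same overall route as the paper (Rees algebras and Proposition \ref{pro:birational}, the weak Jacobian dual read off from the bidegree $(1,s)$ generators of the defining ideal, Proposition \ref{prop:power} for the torsion), but the decisive steps are either missing or replaced by unsupported claims, and your own closing paragraph concedes that the ``delicate homological bookkeeping'' around $\operatorname{Im}(\psi^t)=(\operatorname{Im}\psi)^*$ is deferred to \cite{Simis2004} --- yet that bookkeeping is precisely the content of the theorem. In (i) $\Rightarrow$ (ii) the paper does not argue by ``symmetry of the identification'': it takes $\psi=\Phi^t$, where $\Phi$ is the graded presentation matrix of $\langle\overline{g_0},\dots,\overline{g_m}\rangle$, so that $\operatorname{rank}_{\operatorname{Frac}(S)}(\psi)=m$ follows at once from exactness of the presentation (the kernel of $(\overline{g_0}\,\cdots\,\overline{g_m})$ has rank $m$); your alternative ``Krull dimension collapse'' argument for $\operatorname{rank}\ge m$ is not a proof. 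The equality $\operatorname{Im}(\psi^t)=(\operatorname{Im}\psi)^*$ is then obtained by dualizing $S^q\xrightarrow{\rho}S^{m+1}\xrightarrow{\psi}\operatorname{Im}(\psi)\to 0$ to identify $(\operatorname{Im}\psi)^*=\ker(\rho^t)$, observing that $(\overline{g_0},\dots,\overline{g_m})^t\in\ker(\Phi^t)=\operatorname{Im}(\rho)$ with $\rho$ of rank one, so that $(\overline{g_i})$ and $\operatorname{Im}(\rho^t)$ share the same first syzygy module, namely $\operatorname{Im}(\Phi)=\operatorname{Im}(\psi^t)$ on one side and $\ker(\rho^t)$ on the other. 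Asserting that this is ``read off'' from the Rees isomorphism leaves the main claim unproved.

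In (ii) $\Rightarrow$ (i) your statement that the duality condition makes the $(1,*)$ relations and their transposes ``generate exactly the same bigraded ideal from both sides'' is neither established nor how the implication actually runs. The paper uses the duality hypothesis only to show that any homogeneous combination $g$ of the rows of $\rho^t$ has first syzygy module $\operatorname{Im}(\psi^t)$, hence $\mathcal{S}_S(\langle \overline{g_0},\dots,\overline{g_m}\rangle)\simeq S[t_0,\dots,t_m]/\operatorname{minors}_1\bigl((t_0\cdots t_m)\cdot\psi^t\bigr)$, which yields \emph{only an epimorphism} $\pi:\mathcal{S}_S(\langle\overline{g}\rangle)\to\mathcal{R}_R(\langle\overline{f}\rangle)$. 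The remaining, essential steps are: applying $\pi$ to the torsion equation $\langle\overline{g}\rangle^{l}\mathcal{T}=0$ from Proposition \ref{prop:power}, using that $\langle \overline{g_0}(\overline{f}),\dots,\overline{g_m}(\overline{f})\rangle\neq 0$ (because $S\simeq R[\overline{f_0},\dots,\overline{f_n}]$ and some $\overline{g_i}\neq 0$) to conclude $\pi(\mathcal{T})=0$, and finally upgrading the induced epimorphism $\mathcal{R}_S(\langle\overline{g}\rangle)\to\mathcal{R}_R(\langle\overline{f}\rangle)$ to an isomorphism because both are domains of the same dimension $\dim S+1=\dim R+1$, so that Proposition \ref{pro:birational} applies. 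None of these steps appear in your sketch; the phrase ``eliminates any possible discrepancy at the level of vanishing loci'' does not substitute for the torsion-killing and the domain-plus-equal-dimension argument.
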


To understand the subsequent arguments it is essential to  recall the proof of the theorem from \cite{Simis2004}, and add a few more details.

\begin{proof}
	$(i) \Rightarrow (ii)$ Assume that $F$ is birational onto $Y$. Then, it is clear that $\operatorname{dim}(R) = \operatorname{dim}(S).$ Let $(\overline{f_0},\dots,\overline{f_n})$ and $(\overline{g_0},\dots,\overline{g_m})$  representatives of $F$ and $F^{-1}$, respectively. Then, we have the isomorphism of Rees algebras$$\mathcal{R}_R(\overline{f_0},\dots,\overline{f_n}) \simeq \mathcal{R}_S(\overline{g_0},\dots,\overline{g_m})$$ by Proposition \ref{pro:birational}, and they have the same defining ideal $\mathcal{I}.$
 Let $\Phi$ the graded representation matrix of the ideal $\langle \overline{g_0},\dots, \overline{g_m} \rangle$

  \begin{equation}
    \label{eq:presentation_inverse}
        \begin{tikzcd} 
	\displaystyle\bigoplus\limits_s S(-d_s) \arrow{r}{\Phi} & S^{m+1}(-d) \arrow{r}{(\overline{g_0} \, \cdots \, \overline{g_m})} & \langle \overline{g_0} ,\dots, \overline{g_m}  \rangle\arrow{d} \\
 &&0.
	\end{tikzcd}
    \end{equation}
    It follows that $\text{minor}_1\left(( t_0 \cdots t_m)  \cdot \Phi \right) \subset \mathcal{I}$ as $\text{minor}_1\left(( t_0 \cdots t_m)  \cdot \Phi \right)$ is the defining ideal of the symmetric algebra of $\langle \overline{g_0},\dots, \overline{g_m} \rangle$. On the another hand, any bidegree $(1,s)$, for $s \geq 1$ , generators in a minimal homogeneous generators of $\mathcal{I}$ is a syzygy between the  $\overline{g_i}.$ Then, $\psi := \Phi^t$ is a weak Jacobian dual matrix of the rational map $F$. Clearly,  by the exactness on $S^{m+1}(-d)$ of the exact sequence \eqref{eq:presentation_inverse} $$ \text{rank}_{\text{Frac}(S)}(\psi) = \text{dim} \left(\text{ker}(\overline{g_0} \cdots \overline{g_m} )\right) = m.$$
    Let \begin{equation}
    \label{eq:exact}
        \begin{tikzcd} 
	 S^q \arrow{r}{\rho} & S^{m+1} \arrow{r}{\psi = \Phi^t} & \text{Im}(\psi) \arrow{r} & 0.
	\end{tikzcd}
    \end{equation} be exact. Dualizing the sequence \eqref{eq:exact} with respect to $S$ yields
    \begin{equation}
    \label{eq:exact1}
        \begin{tikzcd} 
	  0 \arrow{r} &{\left(\text{Im}(\psi)\right)}^* \arrow{r}{\Phi} & {\left(S^{m+1}\right)}^* \arrow{r}{\rho^t} & {\left(S^q\right)}^*.
	\end{tikzcd}
    \end{equation} 
    It follows that ${\left(\text{Im}(\psi)\right)}^* = \text{ker}(\rho^t).$ As $${(\overline{g_0}  \cdots  \overline{g_m})}^t \in \text{ker}(\Phi^t) = \text{Im}(\rho)$$ by Proposition \ref{prop:domain_module} and $\rho$ has rank one over $\text{Frac}(S)$, then  the tuple $(\overline{g_0}, \dots, \overline{g_m})$ and $\text{Im}(\rho^t)$ have the same first syzygies. The first sygygies  module  between the $\overline{g_i}$ is $\text{Im}(\Phi) = \text{Im}(\psi^t)$ and that of $\text{Im}(\rho^t)$ is $\text{ker}(\rho^t) = {\left(\text{Im}(\psi)\right)}^* $. Finally $\text{Im}(\psi^t)={\left(\text{Im}(\psi)\right)}^*$ 
    
    $\textup{(i)} \Leftarrow \textup{(ii)}$ Consider the exact sequence 
     \begin{equation}
    \label{eq:exact2}
        \begin{tikzcd} 
	 S^q \arrow{r}{\rho} & S^{m+1} \arrow{r}{\psi } & \text{Im}(\psi) \arrow{r} & 0.
	\end{tikzcd}
    \end{equation}
    By dualizing the exact sequence  \eqref{eq:exact2} and using the hypothesis ${\left(\text{Im}(\psi)\right)}^* = \text{Im}(\psi^t) $, we get the exact sequence.

    \begin{equation}
    \label{eq:exact2bis}
        \begin{tikzcd} 
	  0 \arrow{r} & \text{Im}(\psi^t) \arrow{r}{\Phi} & {\left(S^{m+1}\right)}^* \arrow{r}{\rho^t} & {\left(S^q\right)}^*.
	\end{tikzcd}
    \end{equation} 
Let $g = (\overline{g_0} \cdots \overline{g_m} )$ be any homogeneous $S$-combination of the rows of $\rho^t$
    As previously, $g$ and $\text{Im}(\rho^t)$ have the same first sygygies module. Since the latter has rank one and is equal to $\text{ker}(\rho^t) = \text{Im} (\phi^t)$, we obtain.
    $$ \mathcal{S}_S(\langle \overline{g_0} , \dots, \overline{g_m} \rangle) \simeq S[t_0,\dots, t_m]/ \text{minor}_1\left( (t_0 \cdots t_m ) \cdot \psi^t \right).$$
    Clearly, $\text{minor}_1\left( (t_0 \cdots t_m ) \cdot \psi^t \right)$ is in the defining ideal of the Rees algebra $\mathcal{R}_R(\langle \overline{f_0} , \dots, \overline{f_n} \rangle)$, for a representative $(\overline{f_0} , \dots, \overline{f_n})$ of the rational map $F.$ Then we have a canonical epimorphism $$\pi : \mathcal{S}_S(\langle \overline{g_0} , \dots, \overline{g_m} \rangle) \rightarrow  \mathcal{R}_R(\langle \overline{f_0} , \dots, \overline{f_n} \rangle)$$ induced by the the identity map of $K[t_0,\dots, t_m, x_0, \dots, x_n].$ On the another hand, the Rees algebra$\mathcal{R}_S(\langle \overline{g_0} , \dots, \overline{g_m} \rangle)$ is isomorphic to $\mathcal{S}_S(\langle \overline{g_0} , \dots, \overline{g_m} \rangle)$ modulo its $S$-torsion $\mathcal{T}$, see Remarks \ref{ree} and \ref{ree1}. Also, there is $l \geq 1 $ such that 
    \begin{equation}
    \label{eq:torsion}
    {\langle \overline{g_0} , \dots, \overline{g_m} \rangle}^l \mathcal{T} = 0
    \end{equation}
    by Proposition \ref{prop:power}. By identifying, $$ \mathcal{R}_R(\langle \overline{f_0} , \dots, \overline{f_n} \rangle) \simeq R[\overline{f_0}t, \dots, \overline{f_n}t] \subset R[t],$$ the morphism $\pi$ maps $\overline{x_i} \in S $ to $\overline{f_i} t$.  Applying $\pi$ to the torsion equation \eqref{eq:torsion} yields 
    \begin{align*}
    {\langle \overline{g_0}(\overline{f_0}t, \dots, \overline{f_n}t) , \dots, \overline{g_m}(\overline{f_0}t, \dots, \overline{f_n}t) \rangle}^l \pi(\mathcal{T}) &= 0  \\
      {\langle \overline{g_0}(\overline{f_0}, \dots, \overline{f_n}) , \dots, \overline{g_m}(\overline{f_0}, \dots, \overline{f_n}) \rangle}^l \pi(\mathcal{T}) &= 0 .
    \end{align*}
    However, $\langle \overline{g_0}(\overline{f_0}, \dots, \overline{f_n}) , \dots, \overline{g_m}(\overline{f_0}, \dots, \overline{f_n}) \rangle \neq 0$ because $$K[x_0,\dots,x_n]/I(Y) \simeq R[\overline{f_0},\dots, \overline{f_n}],$$ and at least one $\overline{g_i} \neq 0.$ Necessarily, we have $\phi(\mathcal{T}) = 0 $ and  an epimorphism
    $$ \mathcal{R}_S(\langle \overline{g_0} , \dots, \overline{g_m} \rangle) \rightarrow \mathcal{R}_R(\langle \overline{f_0} , \dots, \overline{f_n} \rangle).$$
    The two Rees algebras have the same dimension. Since 
    \begin{align*}
        \text{dim}(\mathcal{R}_S(\langle \overline{g_0} , \dots, \overline{g_m} \rangle)) & = \text{dim}(S) +1 \\
        & = \text{dim}(R) +1 \\
        & = \text{dim}(\mathcal{R}_R(\langle \overline{f_0} , \dots, \overline{f_n} \rangle)).
    \end{align*}
    So they are domains and have the same dimension. Then, the map is an isomorphism and it is induced by the identity map of $K[t_0,\dots,t_m,x_0,\dots, x_n].$ By Proposition \ref{pro:birational}, the rational map $F$ is birational onto its image $Y.$

    The last statement follows immediately from Proposition \ref{prop:domain_module}.

 \end{proof}
\begin{rem} \label{rem:criterion}
    Using the notation of Theorem \ref{thm:main_criterion}, it is not clear how we can check the condition $\text{Im}(\psi^t) = {\left(\text{Im}(\psi)\right)}^*$  in condition (ii) of Theorem \ref{thm:main_criterion}. However, the proof of the theorem shows that the condition is equivalent to $\text{Im}(\psi^t) = \text{ker}(\rho^t)$.

    On the other hand, we did not mention how we can compute a weak Jacobian dual of a rational map $F$. This is equivalent to find a defining ideal $\mathcal{I}$ of the Rees algebra $\mathcal{R}_R(\langle \overline{f_0} , \dots, \overline{f_n} \rangle)$, with $(\overline{f_0} , \dots, \overline{f_n} )$ a representation of the rational map. Remark \ref{ree1} said that the the ideal $\mathcal{I}$ is defined by
    
    $$ \mathcal{I} = \left\{ f \in R[x_0,\dots, x_n] \, | \, f(\overline{f_0},\dots, \overline{f_n)} = 0  \right\}.$$
    If we write  the Rees algebra $\mathcal{R}_R(\langle \overline{f_0} , \dots, \overline{f_n} \rangle)$ in terms of residue algebra of the the $K$-algebra $K[t_0,\dots, t_m,x_0,\dots, x_n]$, then  the condition $ f(\overline{f_0},\dots, \overline{f_n)} = 0 $ is replaced by $$f(t_0,\dots,t_m,f_0,\dots, f_n) \in I(X)$$ for $f \in K[t_0,\dots, t_m,x_0,\dots, x_n]$. The latter condition is equivalent to saying that $f$ is in the ideal of graph of the rational map. To conclude, a weak Jacobian dual can be computed via the ideal of the graph  of a rational map in Proposition \ref{prop:graph}. 
\end{rem}

The algorithm for determining invertibility and, if this is the case, computing the inverse of a rational map is presented in Algorithm \ref{alg:inverse}. We illustrate the algorithm and Theorem \ref{thm:main_criterion} in Example \ref{ex inverse}.

\begin{algorithm}[ht]
		\caption{Inverse of rational map}
  \label{alg:inverse}
		\begin{algorithmic}[1]
			\REQUIRE A rational map
			$$\Phi : X = V(I) \subset
			\mathbb{P}_K^m \dashrightarrow \mathbb{P}_K^n , \, t \mapsto
			(\overline{f_0}(t):{\dots}: \overline{f_n}(t))$$ with $X$  irreducible and $\overline{f}_i \in 
			K[t_0,\dots,t_m]/I(X) $. Denote by $R$ the homogeneous coordinate ring of $X$.
			\ENSURE If it is birational, the $S$-module of representative of the inverse of the rational map $\Phi$, where $S$ is the homogeneous coordinate of the image of $\Phi$. If not the case, return \textbf{false}.
			\STATE Set the ideal $$ J = \left\langle \text{minors}_2\begin{pmatrix}
				x_0  & \cdots & x_n\\
				f_0  & \cdots & f_n
			\end{pmatrix}\right\rangle \subset K[t_0,\dots,t_m,x_0,\dots,x_n]$$
			\STATE Compute  a Gr\"obner basis respecting an elimination ordering for $t_0,\dots,t_m$ of the ideal of the graph $$G=  (I+J): \left\langle f_0,\dots,f_n \right\rangle^\infty \subset  K[t_0,\dots,t_m,x_0,\dots,x_n].$$
             \STATE Compute the ideal of the image $$G \cap K[x_0,\dots, x_n] \subset K[x_0,\dots,x_n].$$ and set $S$ to be the homogeneous coordinate ring of the image.
			\STATE Define a weak Jacobian dual matrix $\psi \text{ over } S $ such that the entries of $ \psi \cdot ({t_0 \cdots t_m)}^t$ are the bidegree $(1,s)$ generators of the ideal $G$, for $s\geq 1$, .
            \STATE Compute a matrix $\rho$  over $S$ such that  $\text{im}(\rho) = \text{ker} (\psi) $
			\IF{ \text{dim}$(R)$ $=$ dim$(S)$  }
            \IF{ $\text{rank}_{\text{Frac}(S) } (\psi)$}
            \IF{$\text{im}(\psi^t) = \text{ker}(\rho^t)$ } 
            \STATE Compute a matrix $\beta$ such that $ \text{im}(\beta) =\text{ker}(\psi) \subset S^{m+1}$ over $S$.
            \RETURN $\text{im}(\beta).$
            \ENDIF
            \ENDIF
            \ENDIF
            \RETURN \textbf{false}.
		\end{algorithmic}
	\end{algorithm}

\begin{exa}\label{ex inverse}
    Reconsider  the rational map from Example \ref{exa:stereographic}
 $$\begin{array}{lcll}
  \Phi : &  \mathbb{P}_\mathbb{C}^1 & \dashrightarrow & \mathbb{P}_\mathbb{C}^2\\
  & (t_0:t_1) & \longmapsto & (\overline{{t^2_0}+ t_1^2} : \overline{t^2_0 - t^2_1 }: \overline{ 2 t_0 t_1}).
 \end{array}
$$
Set $J = \left\langle \text{minors}_2\begin{pmatrix}
                                 x_0  &  x_1 & x_2\\
                                 t^2_0+t^2_1   & t^2_0 - t^2_1 & 2 t_0 t_1
                                \end{pmatrix}
 \right\rangle  \subset K[t_0,t_1,x_0,x_1,x_2].  $ The ideal $J$ is generated by the polynomials \begin{align*}-t_0^2 x_2+2t_0t_1x_1+t_1^2x_2, \\
-t_0^2x_2+2t_0t_1x_0-t_1^2x_2,\\
-t_0^2x_0+t_0^2x_1+t_1^2x_0+t_1^2x_1
 \end{align*}
 The ideal of the graph $G \in K[t_0,\dots,t_m,x_0,\dots,x_n]$ is generated by
 \begin{align*}  x_0^2-x_1^2-x_2^2, \\
t_0x_2-t_1x_0-t_1x_1,\\
t_0x_0-t_0x_1-t_1x_2
 \end{align*}
 and the ${\text{im}(\Phi)} = V(\langle x_0^2-x_1^2-x_2^2 \rangle) \subset \mathbb{P}^2_\mathbb{C}$
 Then, a weak Jacobian dual $\psi$ can be compute by extracting the bidegree $(1,s)$ generators from $G$. That are the two last generators of $G$ and 
 $$\psi = \begin{pmatrix}
         \overline{x_2} & - \overline{x_0}-\overline{x_1} \\
            \overline{x_0}-\overline{x_1}   &  -\overline{x_2}
                                \end{pmatrix} \in \mathcal{M}_{2,2}(\mathbb{C} [x_0,x_1,x_2]/ \langle x_0^2-x_1^2 - x_2^2 \rangle  .)$$

                                The rank of $\psi$ over the fraction field of the homogeneous coordinate ring of the image $\mathbb{C} [x_0,x_1,x_2]/ \langle x_0^2-x_1^2 - x_2^2 \rangle$ is one because the second row of $\psi$ is obtained by multiplying the first one by $\overline{x_2}$. Clearly, $\text{dim}(\mathbb{P}^1_\mathbb{C}) =\text{dim}(\text{im}(\Phi)) = 1. $\\
Using the command \texttt{syz} we can compute 
$$ \rho = \begin{pmatrix}
    \overline{x_0} + \overline{x}_1 & \overline{x}_2 \\
    \overline{x}_2 & \overline{x_0}-\overline{x}_1
\end{pmatrix}$$ defined by the exact sequence 
$$
        \begin{tikzcd} 
	 S^2 \arrow{r}{\rho} & S^{2} \arrow{r}{\psi } & \text{Im}(\psi) \arrow{r} & 0,
	\end{tikzcd}
    $$
    where $ S $ is the homogeneous coordinate ring of the image. Then $\text{ker}(\rho^t)$ is generated by the submodule image of the matrix $$
    \begin{pmatrix}
        \overline{x_0} -\overline{x_1} & - \overline{x_2} \\
        -\overline{x_2} & \overline{x_0} + \overline{x_1}. 
    \end{pmatrix}
    $$ in $S^2$. We can see that the condition $\text{Im}(\psi^t) = \text{ker}(\rho^t) $ is satisfied. Thus, the rational  $\Phi$ is birational by  Theorem \ref{thm:main_criterion} and the $S$-module of representative of the inverse is given by $\text{ker}(\psi)$. Using for example the procedure \texttt{syz} from in \textsc{Singular}, we determine $\text{ker}(\psi)$, which is the submodule image of the matrix $$
    \begin{pmatrix}
        \overline{x_0} + \overline{x_1} & \overline{x_2} \\
        \overline{x_2} & \overline{x_0} - \overline{x_1}
    \end{pmatrix}$$ in $S^2.$

Then, the inverse of $\Phi$ is defined by
     $$\begin{array}{lcll}
  \Phi^{-1} : &  V(\langle x_0^2-x_1^2-x_2^2 \rangle) \subset \mathbb{P}_\mathbb{C}^2 & \dashrightarrow & \mathbb{P}_\mathbb{C}^1\\
  & (x_0:x_1:x_2) & \longmapsto & (\overline{x_0} + \overline{x_1}: \overline{x_2}).
 \end{array}
$$
\end{exa}

\subsection{Algorithm for computing the domain}
We present an algorithm to compute the module of all the  representative of a rational map, which then allows us to find the domain of the rational map. Proposition \ref{prop:domain_module} already gives an algorithm to compute it. However, we show in this section that it is possible to compute it also from the ideal of the graph in Proposition \ref{prop:graph}. See the corresponding algorithm in Algorithm \ref{algo:domain}.

	\begin{algorithm}[ht]
		\caption{Representatives and domain of rational map}
            \label{algo:domain}
		\begin{algorithmic}[1]
			\REQUIRE A rational map
			$$\Phi : X = V(I) \subset
			\mathbb{P}_K^m \dashrightarrow \mathbb{P}_K^n , \, t \mapsto
			(\overline{f_0}(t):{\dots}: \overline{f_n}(t))$$ with $X$  irreducible and $\overline{f}_i \in 
			K[t_0,\dots,t_m]/I(X) $. Denote by $R$ the homogeneous coordinate ring of $X$
			\ENSURE  The $R$-module of representatives of the rational map $\Phi$, as well as the domain..
			\STATE Set the ideal $$ J = \left\langle \text{minors}_2\begin{pmatrix}
				x_0  & \cdots & x_n\\
				f_0  & \cdots & f_n
			\end{pmatrix}\right\rangle \subset K[t_0,\dots,t_m,x_0,\dots,x_n]$$
			\STATE Compute  a Gr\"obner basis respecting an elimination ordering for $t_0,\dots,t_m$ of the ideal of the graph $$G=  (I+J): \left\langle f_0,\dots,f_n \right\rangle^\infty \subset  K[t_0,\dots,t_m,x_0,\dots,x_n].$$
			\STATE Define a  matrix $\alpha \text{ over } R $ such that the entries of $ \alpha \cdot ({x_0 \cdots x_n)}^t$ are the bidegree $(r,1)$ generators of the ideal $G$, for $r\geq 1$.
            \STATE Compute a matrix $\beta$  over $R$ such that  $\text{im}(\beta) = \text{ker} (\alpha) .$
            \STATE Write $\beta = [ \beta_1 \ \beta_2 \ \cdots \ \beta_n ]$.
            \RETURN $\text{im}(\beta)$, and 
\[
\text{Domain}(\beta) = X\setminus V\left(\cap_{i=1}^{n} \langle \beta_i \rangle\right)
\] where $\langle \beta_i \rangle$ is the ideal generated by the entries of the column $\beta_i$ of $\beta$.
		\end{algorithmic}
	\end{algorithm}

\begin{proof}(Algorithm \ref{algo:domain}) 
Let $ h_1,\dots,h_k \in K[t_0,\dots,t_m]$  the elements of $G$ which lie in the subring $K[t_0,\dots,t_m]$ and $l_0,\dots,l_r \in K[t_0,\dots,t_m,x_0,\dots,x_n]$ the elements  of bidegree $(r,1)$ in the Gr\"obner basis of $G$. Denote by $E= \text{ker}(\alpha)$ the submodule of the free module $R^{n+1}$. \\
Let $(\overline{g_0},\dots, \overline{g_n}) \in E$. This means that $$l_i(t_0,\dots,t_m,g_0,\dots,g_n)=l_{i_{|x_q=g_q}} \in I(X)$$ for all $0\leq i\leq v$. On the other hand, as $x_i f_j -x_j f_i \in J \subset G $ and the set  of $G$ is a Gr\"{o}bner basis, we have 
\begin{equation}\label{eq:domain}
x_i f_j -x_j f_i= \sum_{u=1}^{k} \alpha_{u} h_ u + \sum_{u=0}^v \beta_u l_u \text{ for all } 0\leq i,j\leq n \end{equation}
This is a standard representation of $x_i f_j -x_j f_i$ respecting the normal form $NF(-,G).$ By doing the substitution $x_q=g_q$ for all $0 \leq q \leq n$ in  equation \eqref{eq:domain}, we get 
\begin{align*}
g_i f_j -g_j f_i &= \sum_{u=1}^{k} \alpha_{u_{|x_q=g_q}} h_ u + \sum_{u=0}^v \beta_{u_{|x_q=g_q}} l_{u_{|x_q=g_q}} \in I(X) 
\end{align*} $\text{ for all } 0\leq i,j \leq n.$
because $h_u, l_{u_{|x_q=g_q}} \in I(X)$. This means that
$$  g_i f_j - g_j f_i \equiv 0 \pmod{I(X)} \text{ for all } 0\leq i,j \leq n.$$
It follows that the tuple $(\overline{g_0},\dots,\overline{g_n})$ is a representation of the rational map $\Phi$. Then $ \cup_{0\leq i \leq s } U_i \subset D(\Phi)$.

For the other inclusion, let $(\overline{g_0},\dots,\overline{g_n})$ be  a representation of the rational map $\Phi$. This means that 
\begin{equation}
\label{eq:2}
	g_i f_j - g_j f_i \in I(X) \text{ for all } 0\leq i,j \leq n, \, i \neq j.
\end{equation}
We have to show that $(\overline{g_0}:\dots:\overline{g_n}) \in E.$ That is $l_{u_{|x_q=g_q}} \in I(X)$ for all $0\leq u \leq r.$\\
Fix $0\leq u \leq r.$ As $l_u \in G$, there exist $N \in \mathbb{N}$ and $0\leq p \leq n$ such that $f_p \notin I(X)$ and $l_u f_p^N \in I+J.$\\
If $I=\langle m_1,\dots, m_v \rangle$, there are $\alpha_i , \beta_{ij} \in K[t_0,\dots,t_m,x_0,\dots,x_n]$ such that
$$
	l_u f_p^N = \sum_{i=1}^v \alpha_i m_i + \sum_{\substack{0\leq i,j \leq n \\ i\neq j}} \beta_{ij} (x_if_j-x_j f_i.)
$$
By substitution, $x_q = g_q$ for all $0 \leq q \leq n$, we get 
$$
l_{u_{|x_q=g_q}} f_p^N = \sum_{i=1}^v \alpha_{i_{|x_q=g_q}} m_i + \sum_{\substack{0\leq i,j \leq n \\ i\neq j}} \beta_{ij_{|x_q=g_q}} (g_if_j-g_j f_i) \in I(X)
$$
\text{ by }  equation \eqref{eq:2}.
As $I(X)$ is a prime ideal and $f_p^N \notin I(X)$, we have $l_{u_{|x_q=g_q}} \in I(X)$.\\
Suppose that $E = \langle (\overline{g}_{00},\dots,\overline{g}_{0n}), \dots,(\overline{g}_{s0},\dots,\overline{g}_{sn}) \rangle$. Thus
\begin{align*}
	\cap_{i=0}^{s} V(I+\langle g_{i0},\dots , g_{in}\rangle) \subset V(I+ \langle g_0, \dots, g_n\rangle)
\end{align*}
as $(\overline{g}_0,\dots,\overline{g}_n) \in E$
Then,
\begin{align*}
	X \setminus V(I+ \langle g_0, \dots, g_n\rangle) \subset X \setminus 	\cap_{i=0}^{s} V(I+\langle g_{i0},\dots , g_{in}\rangle) = \cup_{i=0}^s U_i
\end{align*} for all representatives $(\overline{g}_0:\dots:\overline{g}_n)$ of the rational map $\Phi$.\\
It follows that $D(\Phi) \subset \cup_{i=0}^s U_i.$
\end{proof}

\begin{rem}
The ideal $G$ is the defining ideal the Rees algebra $\mathcal{R}_R(\langle \overline{f_0}, \dots, \overline{f_n}  \rangle)$ by Remark~\ref{rem:criterion}. Then, using Remark \ref{ree} and \ref{ree1}, the elements of bidegree $(r,1)$ of $G$ generate the defining ideal of symmetric algebra $\mathcal{S}_R(\langle \overline{f_0} , \dots, \overline{f_n} \rangle)$. It follows that those elements are relations between the $\overline{f_i}$ and Proposition \ref{prop:domain_module} proves  Algorithm \ref{algo:domain}.
\end{rem}

\begin{exa}
    Consider the rational map $$\begin{array}{lcll}
  \Phi : &  V(\langle t_0^2-t_1^2-t_2^2 \rangle) \subset \mathbb{P}_\mathbb{C}^2 & \dashrightarrow & \mathbb{P}_\mathbb{C}^1\\
  & (t_0:t_1:t_2) & \longmapsto & (\overline{t_0} + \overline{t_1}: \overline{t_2}).
 \end{array}
$$
Let apply Algorithm $\ref{algo:domain}$ to find the domain of the rational map $\Phi$.\\
Using \textsc{Singular}, the ideal of the graph $G$ is generated by  the polynomials
$$
\begin{array}{l}
     2t_1 x_0 x_1-t_2 x_0^2+t_2 x_1^2, \\
t_0 x_1+t_1 x_1-t_2 x_0, \\
t_0 x_0-t_1 x_0-t_2 x_1, \\
t_0^2-t_1^2-t_2^2.
\end{array}$$

We see that only the second and third generators of $G$ have bidegree $(r,1)$ for $r \geq 1$.  It follows that 
$$ \alpha = \begin{pmatrix}
   -\overline{t_2} &  \overline{t_0}+\overline{t_1} \\
\overline{t_0}-\overline{t_1} & -\overline{t_2}   
\end{pmatrix} \in \mathcal{M}_{22}(R),$$
where $R = K[t_0,t_1,t_2]/\langle t_0^2 -t_1^2 -t_2^2 \rangle. $ \\ Using the procedure \texttt{syz} from \textsc{Singular}, we get
$$
\beta = \begin{pmatrix}
    \overline{t_0}+\overline{t_1} & \overline{t_2} \\  
\overline{t_2} &   \overline{t_0}-\overline{t_1}
\end{pmatrix} \in \mathcal{M}_{22}(R),
$$
such that $\text{im}(\beta) = \ker({\alpha}).$\\
This means that the rational map $\Phi$ has two main representatives which, are $(\overline{t_0}+ \overline{t_1}, \overline{t_2} ) \in R^2$ and $(\overline{t_2}, \overline{t_0} - \overline{t_1}) \in R^2.$ The others representatives are just $R$-linear combinations of these two. \\
Only the point $(1 : -1 : 0) \in V\left( \langle t_0^2-t_1^2- t_2^2 \rangle \right)$  does not have an image if we use the representative $(\overline{t_0}+ \overline{t_1}, \overline{t_2} )$. However, it does in the second representative  $(\overline{t_2}, \overline{t_0} - \overline{t_1})$. To conclude, the rational map $\Phi$ is morphism. That is $D(\Phi) = V\left( \langle t_0^2-t_1^2- t_2^2 \rangle \right).$
\end{exa}

\section{Petri nets and GPI-Space}\label{sec4}

Development of parallel algorithms, their efficient implementation, and exploring their use is a fundamental challenge in computer algebra. Separating the coordination and the computation in a given algorithmic problem has significant benefits when modeling parallel computations, see \cite{gelernter1992coordination}. An asynchronous ensemble is built from a description of each of its computational activities and the connections between them which model their logical dependence and communication. In the language of Petri nets, this is done by describing transitions implementing the computational activities, and linking them via places, which can hold pieces of data (called tokens), into a bipartite graph by adding directed edges.

Due to the unpredictability of the time and memory consumption of key algorithmic tools, like Buchberger's algorithm for computing Gröbner bases, parallelization is a fundamental challenge in computational commutative algebra and algebraic geometry. As demonstrated in multiple application areas, the idea of separating computation and coordination has proven to be a game-changer in this sector of computer algebra.

The actual use of the idea of computation and coordination depends on an efficient and reliable implementation. This task goes beyond what computer algebra systems can deliver on their own. We thus make use of a proven workflow management system which can directly execute a variant of a Petri net.

\subsection{\textsc{Singular}/\textsc{GPI-Space} Framework}

The tasked-based workflow management system \textsc{GPI-Space} for parallel applications, which is developed by the Fraunhofer Institute for Industrial Mathematics (ITWM) as open-source software, realizes the coordination layer in our approach and is combined with the computer algebra system \textsc{Singular} for highly efficient polynomial computations as the computation layer. Further computational tools can be integrated as well, but are not of relevance for our problems in birational geometry. For algorithms with coordination modelled in terms of a Petri net, \textsc{GPI-Space} offers automatic parallelization and balancing of work-loads. For practical usability, \textsc{GPI-Space} allows transitions to take time and tokens to be data structures (corresponding to so-called timed and colored Petri nets). The system can be applied on a wide range of computational infrastructure, from a personal computer, a compute server, to an HPC cluster. In our framework, we use \textsc{Singular} not only as backend, but also as user frontend. We note that our framework allows for the use of other frontends, as well.

The \textsc{Singular}/\textsc{GPI-Space} framework has been effectively applied in various domains, such as computing the smoothness of algebraic varieties, resolution of singularities, computation of GIT-fans in geometric invariant theory, determination of tropicalizations, and in high energy physics for multivariate partial fraction decomposition. For some of these applications, see \cite{boehm2018massively, boehmissac, bendle2020parallel, boehmPFD}. Generic massively parallel wait-all and wait-first workflows have been implemented in the framework. In this paper, we develop a generic massively parallel modular framework, relying on the approach discussed in Section \ref{rational reconstruction}.

In order to use the \textsc{Singular}/\textsc{GPI-Space} framework, we have to model our workflow as a Petri net. We start out with a short outline of the concept of Petri nets.

\subsection{Petri nets} 
The theory of Petri nets was introduced by Carl Adam Petri in his Ph.D thesis in 1962 to model distributed  systems through a mathematical representation. This representation describes the structure as well the dynamic behavior of the system. We will use an approach using graph theory to formally define Petri net. See, for example, \cite{peterson1981} for more details.
\begin{defn}
	A Petri net is a bipartite directed graph $G=(V,A)$, where $V$ is a finite set of vertices and $A \subset V \times V$ is a finite set of directed edges. The vertices are divided into two types, called places and transitions, denoted by $P$ and $T$, respectively. Then, $V$ is the disjoint unions $V=P \cup T$ we require that $A \subset (P \times T \cup T \times P)$.\\
	In  our visualization of a Petri net, places are depicted as circles and transitions are represented by squares.
\end{defn}
\begin{exa}\label{fig petri} 
The following Petri net, consists out of two places, $P_1$ and $P_2$, one transition $T$, connected by three edges.
\
	\begin{center}
		\begin{tikzpicture}
		\node[place,label=below:$P_1$] (i) at (0,0) {};
		\node[transition,minimum width=8mm, minimum height = 8mm,label=below:$T$]  (t)  at (2,0) {};
		\node[place,label=below:$P_2$] (o) at (4.4,0) {};
		\draw[thick] (i) edge[post] (t);
		\draw[thick] (t) edge[post,bend left=20] (o);
		\draw[thick] (o) edge[post,bend left=20] (t);
		\end{tikzpicture}
	\end{center}
\end{exa}
\begin{defn}
	A marking $\mu$ of a Petri net $G$ is function $\mu : P  \rightarrow \mathbb{N}$. For $p \in P$, we say that the place $p$ holds  $\mu(p)$ tokens under $\mu$.
	Assigning a marking to a Petri net defines a marked Petri net $G=(V,A,\mu)$.
	Tokens are visually represented by dots in the circles which represent the places of a Petri net.
	\begin{exa}\label{exa:1} Assigning to the Petri net in Example \ref{fig petri} the marking $\mu$ defined by $\mu(P_1) = 4$ and  $\mu(P_2) = 1. $, we obtain
		\begin{center}
			\begin{tikzpicture}
			\node[place,label=below:$P_1$,tokens=4] (i) at (0,0) {};
			\node[transition,minimum width=8mm, minimum height = 8mm,label=below:$T$]  (t)  at (2,0) {};
			\node[place,label=below:$P_2$,tokens=1] (o) at (4.4,0) {};
			\draw[thick] (i) edge[post] (t);
			\draw[thick] (t) edge[post,bend left=20] (o);
			\draw[thick] (o) edge[post,bend left=20] (t);
			\end{tikzpicture}
		\end{center}
	\end{exa}
\end{defn}
The same  Petri can have many \emph{states} depending on which marking is placed on it. So it is natural to think of a way to relate marking  between them. This leads to the following definitions.

\begin{defn}
	Let $G=(V,A,\mu)$ a marked Petri net. A transition $t\in T$ is enabled by the marking $\mu$ if for $p \in P$ such that $(p,t) \in A$, we have 
	$$\mu(p) \geq 1.$$\end{defn}

\begin{rem}
	Let $t \in T$, a place $p \in P$ such that $(p,t) \in A$ is called an input place of the transition $t$ and if $(t,p) \in A$, the place $p$ is said to be  an output place of the transition $t$. So, a transition is enabled if each of its input places has at least one tokens.
\end{rem}
\begin{defn}
	A transition fires by removing one token from each of its  input places and depositing one token into each of its output places.
\end{defn}
\begin{exa}
	In Example	\ref{exa:1}, firing the transition $T$ results in the Petri net
	\begin{center}
		\begin{tikzpicture}
		\node[place,label=below:$P_1$,tokens=3] (i) at (0,0) {};
		\node[transition,minimum width=8mm, minimum height = 8mm,label=below:$T$]  (t)  at (2,0) {};
		\node[place,label=below:$P_2$,tokens=1] (o) at (4.4,0) {};
		\draw[thick] (i) edge[post] (t);
		\draw[thick] (t) edge[post,bend left=20] (o);
		\draw[thick] (o) edge[post,bend left=20] (t);
		\end{tikzpicture}
	\end{center}
	with the new marking $\mu'$ defined by $\mu'(P_1)=3$ and $\mu'(P_2) = 1$. It is easy to verify that from the original marking, $T$ can fire four times, but not more.
\end{exa}
\begin{pro}\label{pro:firing}
	Let  $G=\left(V,A,\mu \right)$ be a marked Petri net. Firing a transition $t$ results in a new marking $\mu'$ defined  by
	$$\mu'(p) = \mu(p)-\left|\left\{(p,t) \right\} \cap A \right|+ \left| \left\{ (t,p)\right\} \cap A \right| \, \text{ for all } p \in P.$$
\end{pro}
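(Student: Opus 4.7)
The plan is to verify the displayed equation by a direct case analysis on each place $p \in P$, translating the verbal firing rule stated immediately above the proposition into arithmetic. The key observation is that since $A \subset (P \times T) \cup (T \times P)$ and $A$ is a set (not a multiset), each of the singletons $\{(p,t)\}$ and $\{(t,p)\}$ either lies in $A$ or does not, so the quantities $|\{(p,t)\} \cap A|$ and $|\{(t,p)\} \cap A|$ are indicators taking the value $0$ or $1$.

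First I would fix an arbitrary $p \in P$ and split into the four cases determined by whether $(p,t) \in A$ and whether $(t,p) \in A$. The firing rule prescribes that one token be removed from $p$ precisely when $p$ is an input place of $t$, i.e.\ when $(p,t) \in A$, and that one token be deposited into $p$ precisely when $p$ is an output place of $t$, i.e.\ when $(t,p) \in A$. In each of the four cases, the right-hand side $\mu(p) - |\{(p,t)\} \cap A| + |\{(t,p)\} \cap A|$ evaluates to the correct new token count (namely $\mu(p)$, $\mu(p)-1$, $\mu(p)+1$, or $\mu(p)$), establishing the formula for every $p \in P$.

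There is no genuine obstacle here: the proposition is bookkeeping that recasts the informal firing rule as a closed formula for $\mu'$. The only point worth flagging is that firing is only defined when $t$ is enabled under $\mu$, which is exactly the condition ensuring $\mu(p) \geq 1$ for every input place $p$, and thus that $\mu'(p) \geq 0$ everywhere; without enabledness the algebraic identity would still hold formally but would fail to describe a legitimate marking into $\mathbb{N}$.
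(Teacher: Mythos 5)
Your proof is correct: the indicator-function case analysis is exactly the bookkeeping needed, and your remark about enabledness ensuring $\mu'(p)\geq 0$ is a sensible touch. The paper itself offers no proof of this proposition (it is stated as an immediate consequence of the verbal firing rule), so your argument simply makes explicit what the paper treats as obvious, with no divergence in approach.
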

\begin{defn}
	Given a marked Petri net $G=(V,A,\mu)$, the firing of transition $t \in T$ gives a new marking $\mu'$ as defined in Proposition \ref{pro:firing}. We say that $\mu'$ \emph{is immediately reachable from $\mu$} and we denote it visually by $\mu \stackrel{t}{\longrightarrow} \mu'.	$
	\\In general, $\mu'$ is \emph{reachable from $\mu$} if there is a sequence of transitions $(t_1,t_2,\dots,t_n)$ and a sequence of marking $(\mu_1,\mu_2,\dots,\mu_n)$  such that $$\mu \stackrel{t_1}{\longrightarrow} \mu_1 \stackrel{t_2}{\longrightarrow} \cdots \stackrel{t_n}{\longrightarrow} \mu'.$$  
\end{defn}
\begin{rem}
	Transitions can continue to fire as long as there is an enabled transition. If there is no enabled transition, the execution of the Petri net stops.
\end{rem}
Petri nets were designed for modeling parallelism, modeling distributed systems and modeling flow of information within a system.

\begin{exa}[Task parallelism, data parallelism]

\begin{figure}
	\begin{center}
		\begin{tikzpicture} 
		\node[place,label=below:$P_1$,tokens=3] (P1) at (0,0) {};
		\node[transition,minimum width=8mm, minimum height = 8mm,label=below:$T_1$]  (T1)  at (2,0) {};
		\node[place,label=below:$P_2$,tokens=2] (P2) at (4.4,1) {};
		\node[place,label=below:$P_3$, tokens=2] (P3) at (4.4,-1) {};
		\draw[thick] (P1) edge[post] (T1);
		\draw[thick] (T1) edge[post,bend left=20] (P2);
		\draw[thick] (T1) edge[post,bend right=20] (P3);
		\node[transition,label=below:$T_2$, minimum width=8mm, minimum height = 8mm] (T2) at (6.5,1) {};
		\node[transition,label=below:$T_3$,minimum width=8mm, minimum height=8mm] (T3) at (6.5,-1) {};
		\node[place, label=below:$P_4$] (P4) at (8.5,1) {};
		\node[place, label=below:$P_5$] (P5) at (8.5,-1) {};
		\draw[thick] (P2) edge[post] (T2);
		\draw[thick] (P3) edge[post] (T3);
		\draw[thick] (T2) edge[post] (P4);
		\draw[thick] (T3) edge[post] (P5);
		\node[transition,label=below:$T_4$, minimum width=8mm,minimum height=8mm] (T4) at (11,0) {};
		\node[place, label=below:$P_6$] (P6) at (13,0) {};
		\draw[thick] (P4) edge[post,bend left=20] (T4);
		\draw[thick] (P5) edge[post,bend right=20] (T4);
		\draw[thick] (T4) edge[post] (P6);
		\end{tikzpicture}
	\end{center}
 \caption{Task and data parallelism in a Petri net.}
 \label{fig parallel}
 \end{figure}
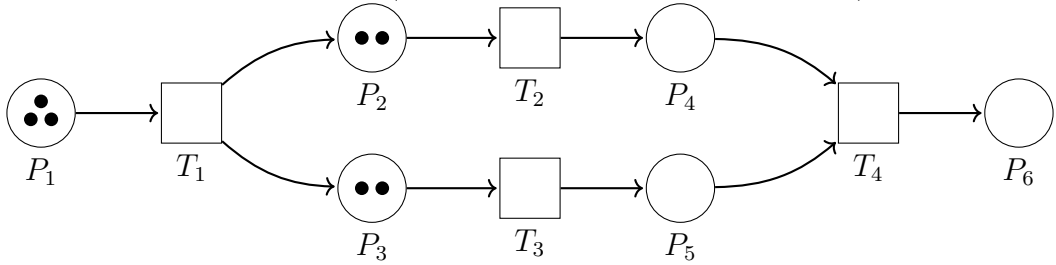
	The Petri net model in Figure \ref{fig parallel} shows that the tasks $T_2$ and $T_3$ can be scheduled in parallel. As transitions they can fire at the same time, realizing task parallelism.
	Moreover, three instances of  $T_1$ can fire in parallel, realizing data parallelism. Similarly two instances of $T_2$ and $T_3$.
\end{exa}

\section[Petri Net for Modular Methods]{Modelling Modular Methods as a Petri Net}\label{sec:5}
In this section, we develop  a generic coordination model for modular algorithms in terms  of Petri nets, as well as its implementation using the \textsc{Singular}/\textsc{GPI-Space} framework, see \cite{modular}. The net is structured into subnets, that are subgraphs and can be thought as inlined into one large Petri net. In the first section, we present the main net for modular algorithms. Subnets are depicted by clouds. In the subsequent three sections, the subnets are described in detail. In Section~\ref{proofalg} we argue why the algorithm is correct and terminates. 

\subsection{Main net}
  We start out by describing the main Petri net modelling the modular algorithm (see Figure \ref{fig:modular}). The Petri net is initialized with the following tokens: a token on the place \texttt{Start}; a token of which all the fields are empty on the place \texttt{Accumulator}; a token containing a boolean field, set to false on the place \texttt{Bal3}; a token containing an integer, set to $0$, on each of the places \texttt{Ct} and \texttt{C} in the main net, as well as the subnets \texttt{Lift} (see Figure \ref{fig:lift}) and \texttt{Lift/Add} (see Figure \ref{fig:liftadd}); a structureless token on the place \texttt{U} in the subnet \texttt{Lift}, the subnet \texttt{Lift/Add}, as well as the subnets \texttt{ParallelFarey} (see Figure \ref{fig:parallelfarey}) and \texttt{ParCompatible} (see Figure \ref{fig:parcompatible}). 
We use \textsc{Singular} as the client to provide the token on the starting place. This token contains the following fields: (i)~an element $P\in R_0\langle m_1,\ldots,m_s\rangle$, provided by the user in \textsc{Singular}; (ii) a finite set of primes $\mathbb P$ (of which the size can be specified), chosen by value (the primes are chosen by decreasing order, starting at the biggest prime \textsc{Singular} can provide); and (iii) the smallest prime $p'$ in the set $\mathbb P$.\\

 \noindent \underline{Transition \texttt{Init}:}\\
 The transition \texttt{Init} parses the information contained in the input token and places a token containing the element $P$ on the place $\texttt{Input}$; for each of the primes $p\in \mathbb P$, places a token containing $p$ on the place $\texttt{Primes}$; places a token containing the prime $p'$ on the place $\texttt{Last}$; and places a set of structureless tokens on the place $\texttt{Bal2}$, using the \texttt{Connect-Out-Many} feature of GPI-space. The number of tokens that $\texttt{Init}$ places on $\texttt{Bal2}$ is optional. The default value for \texttt{Bal2} is $83\%$ of  the total number of cores, assigned to the framework. Note that the place \texttt{Last} will henceforth contain one token that will be continuously updated while the Petri net is running. \\

\noindent \underline{Transition \texttt{Generate}:}\\
\texttt{Generate} fires after reading the token containing the element $P$ from input, and consuming a token containing a prime $q$ from the place \texttt{Primes}. \texttt{Generate} then reduce $P$ modulo $q$, placing a token on the place \texttt{ModIn}, containing the following fields: (i) the reduction $P_{q}$ and (ii) the prime $q$.\\ 

The goal of the next two transitions, the transition \texttt{GenPrime if C $\equiv$ m-1[m]} and the transition \texttt{If C $\not\equiv$ m-1[m]}, is to enlarge the set of tokens containing primes, on the place \texttt{Primes}, by the number of tokens that was placed on the place \texttt{Bal1}, in batches of $m$ primes. The tokens that are added to the place \texttt{Primes} should contain primes that are, and were, not contained in tokens on the place \texttt{Primes} yet. We describe these transitions in detail.\\

\noindent \underline{Transition \texttt{GenPrime if C $\equiv$ m-1[m]}:}\\
The transition \texttt{GenPrime if C $\equiv$ m-1[m]} only fires if the counter \texttt{C} has a value of $m-1$ mod $m$. After consuming a token from \texttt{Bal1} and the token from \texttt{Last}, containing the prime $p'$, \texttt{GenPrime if C $\equiv$ m-1[m]} places $m$ tokens containing the next $m$ primes, respectively, in decreasing order smaller than $p'$, on the place \texttt{Primes}, as well as a token containing the smallest of the $m$ primes on \texttt{Last}. The transition then increments the counter \texttt{C} by $1$. Transition \texttt{GenPrime if C $\equiv$  m-1[m]} hence enlarges the set of primes that the transition \texttt{Generate} has access to by $m$, and updates the value of the prime in the token on the place \texttt{Last}.\\

\noindent \underline{Transition \texttt{If C $\not\equiv$ m-1[m]}:}\\
The transition \texttt{If C $\not\equiv$ m-1[m]} only fires if the counter \texttt{C} has a value that is not equal to $m-1$ mod $m$. The transition consumes a token from the place \texttt{Bal1} and update the value of the counter by adding $1$ to the current value. By this, the transition ensures that the counter \texttt{C} gives a close approximation, that is also a lower bound, of the number of tokens that was placed on \texttt{Bal1} by the transition \texttt{Compute}, minus the number of primes that was placed on \texttt{Primes} by the transition \texttt{If  C $\equiv$ m-1[m]}. \\

\noindent \underline{Transition \texttt{Compute}:}\\
For the transition \texttt{Compute} to fire there should be at least one token on \texttt{Bal2} and the token on \texttt{Bal3} should have boolean value \texttt{False}. After reading the value of the token on \texttt{Bal3}, and consuming a token from \texttt{Bal2}, and a token containing  a reduction $P_q$ and a prime $q$, as fields, from \texttt{ModIn}, it computes the output $Q_q$ as a reduced Gr\"{o}bner basis of the given algorithm $\mathcal A$ over $\mathbb Z/q$. \texttt{Compute} then places a token on \texttt{ModRes1}, containing the following fields: (i) the output $Q_N$, $N=q$, (ii) the product $N=q$, (iii) the number of primes in the product $N=q$ (in this case it is 1), and (iv) $\LM(Q_N)$. We call a token of such type a token of type \texttt{modular data}.\\ 

\noindent \underline{Transitions \texttt{Ct $\not \equiv$ 0 mod 10} and \texttt{Ct $\equiv$ 0 mod 10}:}\\
The two transitions attaching the place \texttt{ModRes1} with the place \texttt{ModRes2}, and with the place \texttt{Test}, respectively, fire sequentially. The transition  \texttt{if  Ct $\not \equiv$ 0 mod 10} fires 9 times, after which the transition with condition \texttt{Ct $\equiv$ 0 mod 10} fires once, repeatedly. This is achieved by the transitions \texttt{ Ct $\not \equiv$ 0 mod 10} and \texttt{Ct $\equiv$ 0 mod 10} consuming a token from the counter $\texttt{Ct}$ and \texttt{ModRes1}, placing the token from \texttt{Ct} back, with a value increased by one, and passing the token from \texttt{ModRes1} to \texttt{ModRes2}, and \texttt{test}, respectively. Since there is only one or zero tokens on \texttt{Ct}, both transitions cannot consume the token at the same time, having a sequential firing as a consequence.\\

\noindent \underline{Subnet \texttt{Lift}:}\\
The subnet \texttt{Lift} in general combines two tokens of type \texttt{modular data}, of which the field (iv), that is $\LM(Q_N)$, agrees, into one token of type \texttt{modular data}. The subnet combines new tokens with new tokens, or already combined tokens. \texttt{Lift} uses the Chinese Remainder Theorem in \textsc{Singular}, as described in Section \ref{rational reconstruction}, to lift the reduced Gr\"{o}bner bases in the fields (i) of two tokens, that is $Q_{N_1}$ and $Q_{N_2}$, into one reduced Gr\"{o}bner basis $Q_{N_1\cdot N_2}$, which is saved in the field (i) of the newly created token. The product of the primes in the products $N_1$ and $N_2$ are saved in the field (ii). The number of primes in the field (iii) is increased to the sum of the number of primes in $N_1$ and $N_2$. \texttt{Lift} stops combining a token with other tokens as soon as the field (iii), that is the number of primes in the product $N$, exceeds $M_1$ (a number that is specified by the user), and places the token on the place \texttt{ModRes3}. To make sure that tokens with many primes in the field (iii) are not hold up for a long time in the subnet \texttt{Lift}, \texttt{Lift} sporadically may also place tokens with less than, or equal to $\frac{M_1}{2}$ primes on \texttt{ModRes3}. Lastly, \texttt{Lift} places a token on the place \texttt{Bal2} every time two tokens are combined into one token, or when a token is placed on \texttt{ModRes3}.\\  

\noindent \underline{Subnet \texttt{Lift/Add}:}\\
Similar to the subnet \texttt{Lift}, the subnet \texttt{Lift/Add} combines two tokens of type \texttt{modular data} into one token of the same type until the number of primes in the field (iii) of the token exceeds $M_2$ primes (a number that is specified by the user). It then combines these tokens with tokens of the same type in \texttt{Accumulator} and places the combined tokens on the place \texttt{LiftedRes}. Note that the subnet \texttt{Lift/Add} will only place a token on the place \texttt{LiftedRes}, if a token was consumed from the place \texttt{Accumulator}. This implies that there will always be one token in total on the places \texttt{LiftedRes}, \texttt{RandRes}, \texttt{RandOut}, \texttt{Result}, and \texttt{Accumulator}. Again, similar to \texttt{Lift}, to make sure that tokens are not held up for a long time in the subnet \texttt{Lift/Add}, the subnet sporadically may combine tokens with less than $M_2$ primes with tokens in the place \texttt{Accumulator}, which is then placed on \texttt{LiftedRes}.\\ 

\noindent \underline{Subnet \texttt{ReconsTest}:}\\
The subnet \texttt{ReconsTest} consumes tokens of type \texttt{modular data} from the place \texttt{LiftedRes}, and tokens of type \texttt{modular data} from the place \texttt{Test}. Firstly, it places a structureless token on the place \texttt{Bal2}. It computes the Farey lift of the result $Q_N$ in the field (i) of the consumed token from \texttt{LiftedRes} (if a coefficient in $Q_N$ is outside the image of the Farey map, the coefficient is mapped to $0$). The result of the Farey lift is then added in a new field (v) to the token. Next, the subnet checks whether all the coefficients in the new field (v) of the token are not divisible by the prime $p$ in the field (iii) of the token consumed from the place \texttt{Test}. If this is not the case, a new token is consumed from \texttt{Test}, and the divisibility test is repeated. If there is no coefficient that is divisible by $p$, the result in (v) is reduced modulo $p$ and compared to the result in the field (i) of the token that was consumed from the place \texttt{Test}. A boolean field (vi) is added to the token to reflect the comparison. Finally token is placed on the place \texttt{RandRes}. The subnet \texttt{ReconsTest} also changes the boolean value in the token on the place \texttt{Bal3} to true every time \texttt{ReconsTest} performs parallel tasks. Then, it turns the value back to false when the parallel tasks are done. This ensures that \texttt{ReconsTest} has access to more resources during the parallel computations to perform the test faster by disabling the transition \texttt{Compute} to fire.
\\

\noindent \underline{Transition \texttt{Break if RandRes.test = True}:}\\
The transition \texttt{Break} \texttt{if RandRes.test} \texttt{=} \texttt{True} consumes the  token from the place \texttt{RandRes} if (there is a token on \texttt{RandRes} and) the field (vi) is \texttt{true} and place it on the place \texttt{RandOut}.\\

\noindent \underline{Transition \texttt{Continue if RandRes.test = False}:}\\
The transition \texttt{Continue} \texttt{if} \texttt{RandRes.test} \texttt{=} \texttt{False} consumes the token from the place \texttt{RandRes} if (there is a token on \texttt{RandRes} and) the field (vi) is \texttt{False}, discards the fields (v) and (vi), that is the Farey lift and the boolean field, and place the token of type \texttt{modular data} on the place \texttt{Accumulator}.\\

\noindent \underline{Transition \texttt{Verify}:}\\
The transition \texttt{Verify} consumes the  token from the place \texttt{RandOut} (if there is a token on \texttt{RandOut}) and does a verification test on the result of the Farey lift in the field (v). Information of the input $P\in R_0\langle m_1,\ldots, m_s\rangle$, that was placed on the place \texttt{Input}, may be needed, and therefore read, for the verification. The boolean in the field (vi) is changed according to the outcome of the test. The token is then placed on the place \texttt{Result}.\\

\noindent \underline{Transition \texttt{Continue if Result.test = False}:}\\
The transition \texttt{Continue if Result.test = False} consumes the token from the place \texttt{Result} if (there is a token on \texttt{Result} and) the field (vi) is \texttt{False}. The fields (v) and (vi), that is the result of the Farey lift and the boolean field, are discarded and the token of type \texttt{modular data} is placed on the place \texttt{Accumulator}.\\

\noindent \underline{Transition \texttt{Break if Result.test = True}:}\\
The transition \texttt{Break if Result.test} \texttt{=} \texttt{True} consumes the token from the place \texttt{Result} if (there is a token on \texttt{RandRes} and) the field (vi) is \texttt{true} and place a token with only the field (v), that is the result of the Farey lift, on the place \texttt{Output}.\\

\begin{figure}[ht]
\centering
\scalebox{0.70}{
	\begin{tikzpicture}
		\node[place,tokens=1,label={90:$\texttt{Start}$}] (singular) at (-5,1) {};
		\node[place] (input) at (0,0) {\texttt{Input}};
		\node[place] (primes) at (-2.5,-1) {\texttt{Primes}};
		\node[transition] (generate) at (0,-2) {\texttt{Generate}};
		\draw[thick] (primes) edge[post,bend left=30] (generate);
		\draw[dashed] (input) edge[post] (generate);
		\node[place] (modinputs) at (0,-4) {\texttt{ModIn}};
		\draw[thick] (generate) edge[post] (modinputs);
		\node[transition] (compute) at (0,-6) {\begin{tabular}{c}
				\texttt{Compute} \\
				if\\
				\texttt{bal3 = False}
			\end{tabular}};
		\draw[thick] (modinputs) edge[post] (compute);
		\node[place] (modres1) at (0,-8.25) {\texttt{ModRes1}};
		\draw[thick] (compute) edge[post] (modres1);
		\node[transition] (init) at (-5,-1) {\texttt{Init}};
		\draw[thick] (init) edge[post, bend left=20] (input);
		\draw[-{Implies}, double,double distance=1.5pt, line width=0.5pt] (init) to (primes);
		\node[transition] (generateprime)  at (-2.5,-4){
		\begin{tabular}{c}
			\texttt{GenPrime} \\
			\texttt{if} \\
			\texttt{C} $\equiv$ \texttt{m-1} [m]
	\end{tabular}};
		\node[place,tokens = 1, label={180:$\texttt{C}$}] (c) at (-4.5,-5.8) {};
		\node[transition] (countbal) at (-3.8,-7.7) {
		\begin{tabular}{c}
			\texttt{if} \\
			\texttt{C} $\not\equiv$ m-1 [m]
		\end{tabular}	
	};
		\draw[-{Implies}, double,double distance=1.5pt, line width=0.5pt] (generateprime) to (primes);
		\node[place] (lastprime) at (-3.5,-2){\texttt{Last}};
		\draw[thick] (init) edge[post] (lastprime);
		\draw[thick] (lastprime) edge[post, bend right=20] (generateprime);
		\draw[thick] (generateprime) edge[post, bend right=20] (lastprime);
		\node[place] (bal1) at (-2.75,-6) {\texttt{Bal1}};
		\draw[thick] (compute) edge[post] (bal1);
		\draw[thick] (bal1) edge[post, bend right = 10] (generateprime);
		\node[place] (bal2) at (-5,-9) {\texttt{Bal2}}; 
		\draw[thick] (bal2) edge[post,bend right=30] (compute);
		\draw[-{Implies},bend right=30, double,double distance=1.5pt, line width=0.5pt] (init) to (bal2);
		\draw[thick] (countbal) edge[post, bend left=20] (c);
		\draw[thick] (c) edge[post, bend left=20] (countbal);
		\draw[thick] (bal1) edge[post, bend left=10] (countbal);
		\draw[thick] (c) edge[post, bend left=20] (generateprime);
		\draw[thick] (generateprime) edge[post, bend left=20] (c);
		\node[transition] (usedtoken) at (-2.5,-10) {
			\begin{tabular}{c}
				\texttt{if}\\
				\texttt{Ct} $ \not\equiv$ \texttt{0} [10]
			\end{tabular}
		};
		\node[transition] (testtoken) at (1.5,-10) {
			\begin{tabular}{c}
				\texttt{if}\\
				\texttt{Ct} $\equiv$ \texttt{0} [10]
			\end{tabular}
		};
		\node[place, tokens = 1, label={270:$\texttt{Ct}$}] (ct) at (-0.5,-12) {};

		\draw[thick] (singular) edge[post] (init);
		\draw[thick] (ct) edge[post,bend left=10] (testtoken);
		\draw[thick] (testtoken) edge[post,bend left=10] (ct);
		\draw[thick] (ct) edge[post,bend right=10] (usedtoken);
		\draw[thick] (usedtoken) edge[post,bend right=10] (ct);
		\draw[thick] (modres1)  edge[post,bend left = 30] (testtoken);
		\draw[thick] (modres1) edge[post,bend right=30] (usedtoken);
		\node[place] (modres2)  at (-2.5,-12.5) {\texttt{ModRes2}};
		\draw[thick] (usedtoken) edge[post] (modres2);
		\node[cloud, draw, fill = gray!10, aspect=2, cloud puffs=16] (lift1) at (-2.5,-15) {\texttt{Lift}};
		\draw[thick] (modres2) edge[post] (lift1);
		\draw[thick] (lift1) edge[post, bend left=30] (bal2);
		\node[place] (modres3) at (-2.5,-17.5)  {\texttt{ModRes3}};
		\draw[thick] (lift1) edge[post] (modres3);
		\node[cloud, draw, fill = gray!10, aspect=2, cloud puffs=16] (subnet1) at (-0,-20) {\texttt{Lift}$/$\texttt{Add}};
		\draw[thick] (modres3) edge[post,bend right=30] (subnet1);

		\node[place] (liftedres) at (3,-18) {\texttt{LiftedRes}};
		\node[place] (accumulator) at (10,-16) {\texttt{Accumulator}};
		\draw[thick] (subnet1) edge[post,bend right=10] (liftedres);
		\node[cloud, draw, fill = gray!10, aspect=2, cloud puffs=16] (reconstest) at (5,-14.5) {\texttt{ReconsTest}};
		\draw[thick] (accumulator) edge[post,bend left=30] (subnet1);
		\draw[thick] (liftedres) edge[post,bend right=20] (reconstest);
		\node[place] (test) at (1.5,-12.5)  {\texttt{Test}};
		\draw[thick] (testtoken) edge[post] (test);
		\draw[thick] (test) edge[post, bend right=20] (reconstest);
		\node[place] (randres) at (5,-11.5) {\texttt{RandRes}};
        \node[place] (bal3) at (3.8,-9) {\texttt{Bal3}};
        \draw[thick](reconstest) edge[post,bend left=20] (bal3);
        \draw[thick] (bal3) edge[post,bend right=30] (reconstest);
        \draw[thick](reconstest) edge[post,bend left=12] (bal3);
         \draw[thick] (bal3) edge[post,bend right=40] (reconstest);
        \draw[thick,dashed](bal3) edge[post,bend right = 10] (compute);
		\draw[thick] (reconstest) edge[post] (randres);
        \draw[thick](reconstest) edge[post, bend left = 50] (bal2);
		\node[transition] (false) at (9,-10.5) {
			\begin{tabular}{c}
				\texttt{Continue} \\
				if\\
				\texttt{RandRes.test = False}
			\end{tabular}
		};
		\node[transition] (true) at (5,-5.5) {
			\begin{tabular}{c}
				\texttt{Break} \\
				if\\
				\texttt{RandRes.test = True}
			\end{tabular}
		};
		\draw[thick] (false) edge[post, bend left=20] (accumulator);
		\draw[thick] (randres) edge[post, bend right = 30] (false);
		\draw[thick] (randres) edge[post] (true);
		\node[place] (randout) at (5,-3) {\texttt{RandOut}};
		\draw[thick] (true) edge[post] (randout);
		\node[transition] (verify) at (4.8,-1) {\texttt{Verify}};
		\draw[thick] (randout) edge[post] (verify);
		\draw[dashed] (input) edge[post,bend left=20] (verify);5
		\node[place] (result) at (7.5,-1) {\texttt{Result}};
		\draw[thick] (verify) edge[post] (result);
		\node[transition] (True) at (9,2) {
			\begin{tabular}{c}
				\texttt{Break} \\
				if\\
				\texttt{Result.test = True}
			\end{tabular}
		};
		\node[transition] (False) at (10,-7.5) {
			\begin{tabular}{c}
				\texttt{Continue} \\
				if\\
				\texttt{Result.test = False}
			\end{tabular}
		};
		\draw[thick] (False) edge[post, bend left=50] (accumulator);
		\draw[thick] (result) edge[post, bend left=30] (False);
		\draw[thick] (result) edge[post,bend right=30] (True);
		\node[place] (output) at (4, 2) {\texttt{Output}};
		\draw[thick] (True)  edge[post] (output);
	\end{tikzpicture}}
	\captionsetup{labelfont=bf}
	\caption{\textbf{Modular algorithm Petri net}}
	\label{fig:modular}
\end{figure}
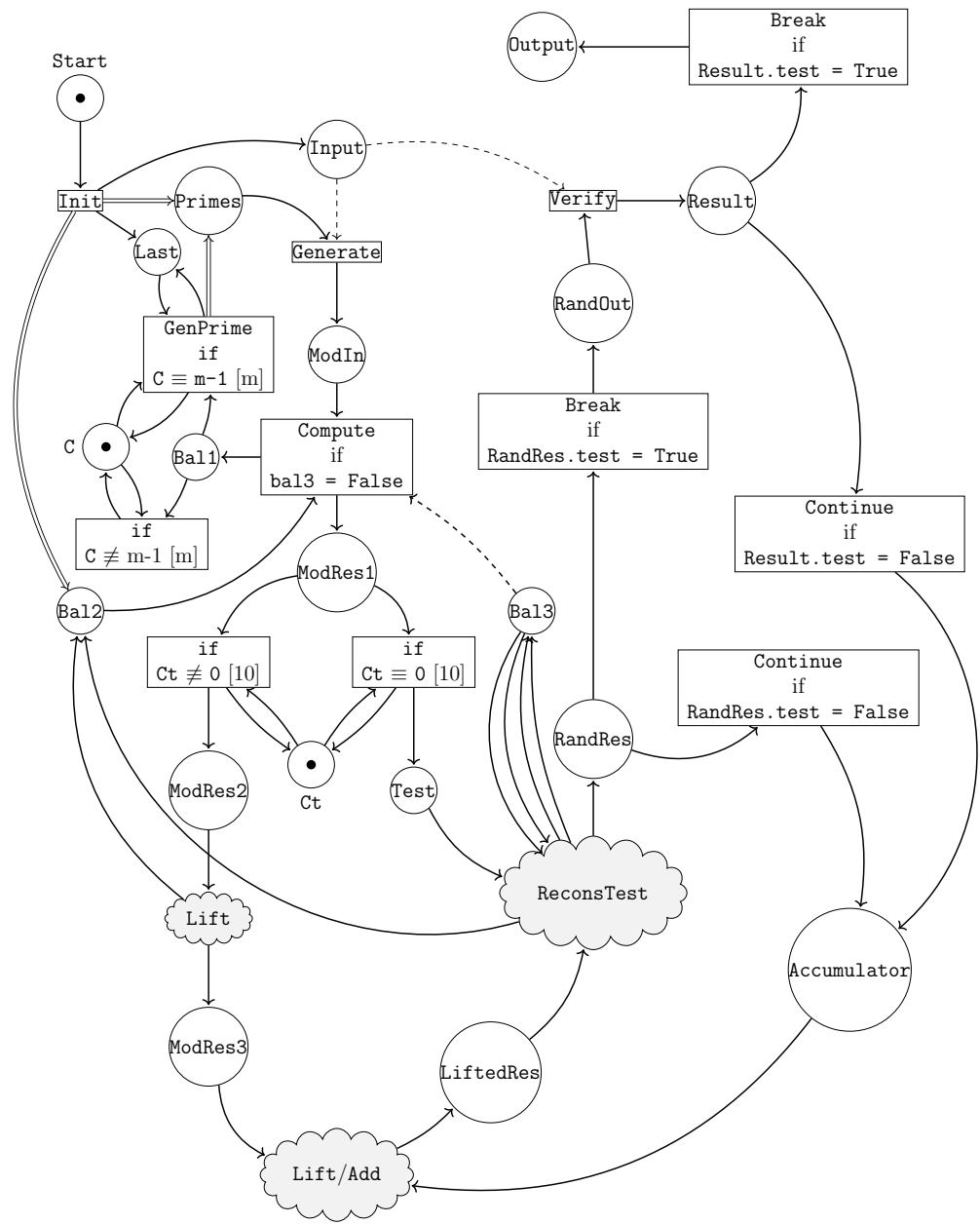

We now describe the different subnets in detail.

\subsection{Subnet Lift}
We first describe the subnet \texttt{Lift}, see Figure \ref{fig:lift}. Note that we initialize the subnet with a structureless token on the place \texttt{U} and the counter \texttt{Ct} set to $0$.\\

\noindent \underline{Transition \texttt{In}:}\\
The transition \texttt{In} consumes a token from \texttt{ModRes2} and places it on \texttt{Copy}. It also places a structureless token on the empty place \texttt{A}, with the purpose to increment the counter \texttt{Ct}.\\

\noindent \underline{Transition \texttt{if nb\_primes} $>$ \texttt{M$_1$ and Ct} $\ge \texttt{1}$:}\\
The transition \texttt{if nb\_primes} $>$ \texttt{M$_1$ and Ct} $\ge \texttt{1}$ consumes tokens from the place \texttt{Copy} if the number in the field (iii) of the token, that is the number of primes in the product $N$, is greater than $ \texttt{M}_1$ and the value of the counter \texttt{Ct} is greater or equal to $ 1$. This ensures that the counter \texttt{Ct} stays positive and that the majority of the tokens that is placed on the place \texttt{E} and consumed by the transition \texttt{Out} has more than $\texttt{M}_1$ primes in the product $N$.\\ 

\noindent \underline{Transition \texttt{if nb\_primes $\le$ M$_1$ and Ct$\ge$1}:}\\
The transition needs a token on the place \texttt{U} and a token on the place \texttt{Copy} with less that or equal to $M_1$ primes in the product $N$ in the field (ii) of the token, as well as a value greater than or  equal to $ 1$ of the counter \texttt{Ct} to fire. (These conditions ensure that the transition \texttt{if nb\_primes $\le$ M$_1$} fires sequentially; only tokens with fewer primes than M$_1$ in the product $N$ is lifted; and the counter \texttt{Ct} will never be negative.)
In this case, the transition consumes the structureless token from the place \texttt{U} and place it on the place \texttt{D}. It also consumes a token from the place \texttt{Copy} of which the number in field (iii) is less than or equal to M$_1$ and places the token on the place \texttt{R}. Lastly, it places a structureless token on the  place \texttt{C}, with the purpose to decrement the counter \texttt{Ct}.\\

\noindent \underline{Transition \texttt{if nb\_primes $\le$ M$_1$}:}\\
The transition \texttt{if nb\_primes $\le$ M$_1$} fires only if there is a token on place \texttt{D} and a token with less than of equal to $M_1$ primes in the product $N$ in the field (ii) of the token. The transition consumes the structureless token from the place \texttt{D} and places it on the place \texttt{U}. It also consumes a token, with less than or equal to $M_1$ primes in the product $N$ in the field (ii) of the token, from the place \texttt{Copy} and places it on the place \texttt{L}.\\

\noindent \underline{Transition \texttt{if Ct $=0$ and nb\_primes $\ge \frac{\texttt{M}_1}{2}$}:}\\
 The purpose of the transition \texttt{if Ct $=0$ and nb\_primes $\ge \frac{\texttt{M}_1}{2}$} is to avoid a situation in which there is a token, with many primes in the product $N$ in the field (ii), on the place \texttt{R} for a long time and the transition \texttt{Lift1} does not fire. It is only desirable, and not a prerequisite, that the value in the field (iii) in the tokens on place \texttt{ModRes3} is greater than  M$_1$ for the rest of the Petri net to work. In fact it is better to use tokens of which the number in the field (iii) is less than M$_1$ in the rest of the Petri net, than to let these tokens lie for a long time in the subnet \texttt{Lift}. The purpose of the  transition \texttt{if Ct $=0$ and nb\_primes $\ge \frac{\texttt{M}_1}{2}$} is to release such tokens from the subnet \texttt{Lift} and put the subnet again in its initial state.

The transition \texttt{if} \texttt{Ct $= 0$ and nb\_primes $\ge \frac{\texttt{M}_1}{2}$} only fires when the counter \texttt{Ct}, which it only reads, has value $0$ and there is a token with a value greater than or equal to $\frac{M_1}{2}$ on the place $\texttt{R}$. Note that the counter \texttt{Ct} always has a value greater than or equal to the number of tokens on the place \texttt{Copy}. Hence, in the situation where the transition \texttt{if Ct $= 0$ and nb\_primes $\ge \frac{M_1}{2}$} will fire, there will be no token on place \texttt{L}, there will be a token on place \texttt{D} and there will be no token on place \texttt{U}. The transition \texttt{if Ct $= 0$ and nb\_primes $\ge \frac{\texttt{M}_1}{2}$} will then consume the token on places \texttt{D} and \texttt{R}, places the token that was on place \texttt{R} on place \texttt{E}, and the structureless token on place \texttt{D} on place \texttt{U}, as well as a structureless token on place $B$. The purpose of the last placement is to increase the number of tokens on the place \texttt{Bal2}.\\

\noindent \underline{Transition \texttt{Lift1}:}\\
The transition \texttt{Lift1} consumes a token from the place \texttt{L} and the place \texttt{R}. Suppose the fields of the token consumed from the places \texttt{L} and \texttt{R} are, respectively, (i) $Q_{N_L}$ (ii) $N_L$ (iii) $M_L$ (iv) $\LM(Q_{N_L})$ and (i) $Q_{N_R}$ (ii) $N_R$ (iii) $M_R$ (iv) $\LM(Q_{N_R})$. \texttt{Lift1} first verifies whether the two tokens can be combined into one token by checking whether the sets in the field (iv), that is, the leading monomials of the reduced Gr\"{o}bner basis of $Q_{N_L}$ and $Q_{N_R}$, respectively, are the same. In case the sets are not the same, one of the tokens with the biggest value in  the field (iii), that is, the number of primes in the product $N_L$ and $N_R$, respectively, is placed on the place \texttt{Copy}, a structureless token is placed on the place \texttt{B}, and the other token is discarded. If the sets in the fields (iv) are the same a new token is created with the following fields (i) the lift $Q_N$ of $Q_{N_L}$ and $Q_{N_R}$, using the Chinese Remainder Theorem (ii) $N=N_L\cdot N_R$ (iii) $M_L+M_R$ (iv) $\LM(Q_{N_L})=\LM(Q_{N_R})=\LM(Q_N)$. The newly created token is placed on the place \texttt{Copy} and a structureless token is placed on the place \texttt{B}. Note that the structureless tokens on the place \texttt{B} will be moved  by a transition to the place \texttt{Bal2}.\\

\noindent \underline{Transitions \texttt{Cancel1}, \texttt{Cancel2}, \texttt{F}, \texttt{Ct+1} and \texttt{Ct-1}:}\\
The purpose of the transitions \texttt{Cancel1} and \texttt{Cancel2} is to make the counter \texttt{Ct} act in a less sequential, and more parallel manner. Consequently the counter \texttt{Ct} gives in general an up-to-date value of the number of tokens on the place \texttt{Copy}. The transition \texttt{Cancel2} consumes a token from the place \texttt{A} and a token from the place \texttt{C} and discard both tokens. Note that the number of tokens on the place \texttt{A} is the number of tokens with which the number of tokens on the place \texttt{Copy} is increased and the number of tokens on the place \texttt{C} is the number of tokens with which the place \texttt{Copy} is decreased. Likewise, the transition \texttt{Cancel1} consumes a token from the place \texttt{D} and a token from the place \texttt{A} and discard the tokens. The transition \texttt{F} consumes tokens from  the place \texttt{C} and places the tokens on the place \texttt{D}. Therefore \texttt{Cancel1} does in essence the same as \texttt{Cancel2}. The purpose of transition \texttt{F} is simply that it takes time to consume and place tokens. Hence the transition \texttt{Ct-1} will fire in a delayed manner with respect to the transition \texttt{Ct+1}. The purpose of the transition \texttt{Ct+1} is to increase the counter by $1$ with every token consumed from the place \texttt{A}, and the purpose of the transition \texttt{Ct-1} is to decrease the counter by $1$ with every token it consumed from the place \texttt{D}. We delay the decreasing of the counter with regard to the increasing of the counter to ensure that the value of the counter is greater than or equal to the exact number of tokens on the place \texttt{Copy}. We do this to avoid the situation where tokens on the place \texttt{R} are unnecessarily taken out of the lifting machinery and placed on the place \texttt{E} by the transition \texttt{if Ct=0 and nb\_primes $\ge \frac{\texttt{M}_1}{2}$}.\\ 

\noindent \underline{Transition \texttt{Out}:}\\
The transition \texttt{Out} consumes tokens from the place \texttt{E} and places them, outside this subnet, on the place \texttt{ModRes3}.\\

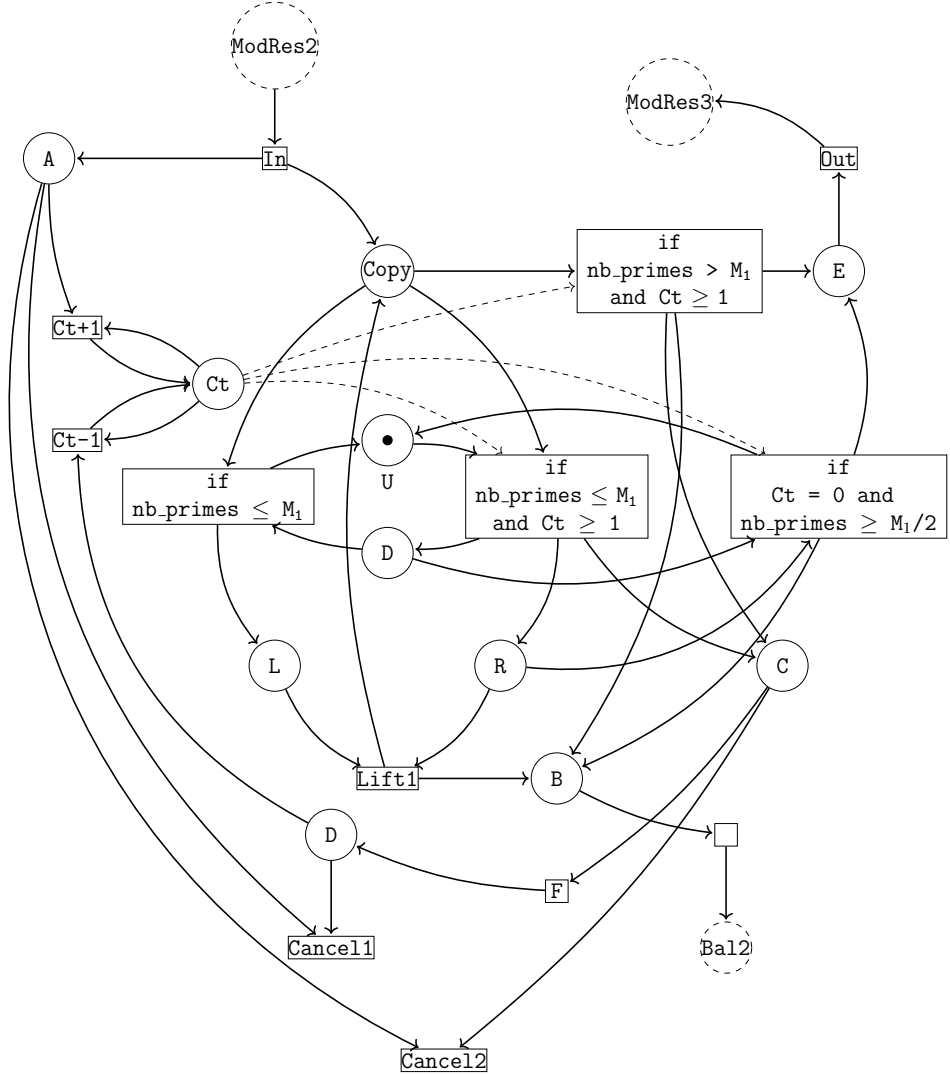
\begin{figure}[ht]
\scalebox{0.75}{
\begin{tikzpicture}
\node[transition] (input) at (-2,0) {\texttt{In}};
\node[place] (copy) at (0,-2) {\texttt{Copy}};
\node[place] (precount)  at (-6,0) {\texttt{A}};
\node[transition] (countup) at (-5.5,-3) {\texttt{Ct+1}};
\node[place] (count) at (-3,-4) {\texttt{Ct}};
\node[transition] (countdown) at (-5.5,-5) {\texttt{Ct-1}};
\node[transition] (preout) at (5,-2) {
\begin{tabular}{c}
\texttt{if}\\
\texttt{nb\_primes > M}$_{\texttt{1}}$\\
\texttt{and Ct} $\geq$ \texttt{1}
\end{tabular}
};
\node[transition] (tright) at (3,-6) {
\begin{tabular}{c}
\texttt{if}\\
\texttt{nb\_primes} $\leq$ \texttt{M}$_{\texttt{1}}$\\
\texttt{and Ct $\geq$ 1}
\end{tabular}
};
\node[transition] (tleft) at (-3,-6){
\begin{tabular}{c}
\texttt{if}\\
\texttt{nb\_primes $\leq$ M}$_\texttt{1}$
\end{tabular}
};
\node[place,tokens=1,label={270:$\texttt{U}$}] (up) at (0,-5) {};
\node[place] (down) at (0,-7) {\texttt{D}};
\node[place] (right) at (2,-9) {\texttt{R}};
\node[place] (left) at (-2,-9) {\texttt{L}};
\node[transition] (lift1) at (0,-11) {\texttt{Lift1}};
\node[place] (out) at (8,-2) {\texttt{E}};
\node[transition] (extraction) at (8,-6) {
\begin{tabular}{c}
\texttt{if} \\
\texttt{Ct = 0 and } \\
\texttt{nb\_primes $\geq $ M$_1$/2}
\end{tabular}
};
\node[place] (bal) at (3,-11){\texttt{B}};
\node[place] (minus1) at (7,-9) {\texttt{C}};
\node[transition] (bridge) at (3,-13) {\texttt{F}};
\node[place] (minus) at (-1,-12) {\texttt{D}};
\node[transition] (cancel1) at (-1,-14) {\texttt{Cancel1}};
\node[transition] (cancel2) at (1,-16) {\texttt{Cancel2}};
\node[transition] (pout) at (8,0) {\texttt{Out}};
\node[transition] (outtobal) at (6,-12){};
\node[place, dashed] (modres1) at (-2,2) {\texttt{ModRes2}};
\node[place, dashed] (bal2) at (6,-14) {\texttt{Bal2}};
\node[place, dashed] (modres2) at (5,1) {\texttt{ModRes3}};

\draw[thick] (input) edge[post, bend left=20] (copy);
\draw[thick] (input) edge[post] (precount);
\draw[thick] (precount) edge[post, bend right=10] (countup);
\draw[thick] (countup) edge[post,bend right=20] (count);
\draw[thick] (count) edge[post, bend right=20] (countup);
\draw[thick] (countdown) edge[post, bend left=20] (count);
\draw[thick] (count) edge[post,bend left=20] (countdown);
\draw[thick] (copy) edge[post, bend right=20] (tleft);
\draw[thick] (copy) edge[post, bend left=20] (tright);
\draw[thick] (copy) edge[post] (preout);
\draw[thick] (preout) edge[post] (out);
\draw[thick] (up) edge[post, bend left=10] (tright);
\draw[thick] (tright) edge[post,bend left=10] (down);
\draw[thick] (down) edge[post, bend left= 10]  (tleft);
\draw[thick] (tleft) edge[post,bend left=10] (up);
\draw[thick] (tright) edge[post, bend left=20] (right);
\draw[thick] (tleft) edge[post, bend right=20] (left);
\draw[thick] (left) edge[post, bend right=20] (lift1);
\draw[thick] (right) edge[post, bend left =20] (lift1);
\draw[thick] (lift1) edge[post] (bal);
\draw[thick] (right) edge[post, bend right=30] (extraction);
\draw[thick] (down) edge[post, bend right=20] (extraction);
\draw[thick] (extraction) edge[post, bend right=20] (up);
\draw[thick] (extraction) edge[post, bend left=20] (bal);
\draw[thick] (extraction) edge[post, bend right=20] (out);
\draw[thick] (preout) edge[post, bend right=20] (minus1);
\draw[thick] (minus1) edge[post, bend left=10] (bridge);
\draw[thick] (bridge) edge[post, bend left=10] (minus);
\draw[thick] (minus1) edge[post, bend left=10] (cancel2);
\draw[thick] (minus) edge[post, bend left=30] (countdown);
\draw[thick] (minus) edge[post] (cancel1);
\draw[thick] (precount) edge[post, bend right=30] (cancel1);
\draw[thick] (precount) edge[post, bend right=40] (cancel2);
\draw[dashed] (count) edge[post, bend left=20] (tright);
\draw[dashed] (count) edge[post, bend left=20] (extraction);
\draw[dashed] (count) edge[post, bend left=5] (preout);
\draw[thick] (out) edge[post] (pout);
\draw[thick] (preout) edge[post, bend left=20] (bal);
\draw[thick] (bal) edge[post, bend right=10] (outtobal); 
\draw[thick] (tright) edge[post, bend right=20] (minus1);
\draw[thick] (lift1) edge[post, bend left=15] (copy);
\draw[thick] (modres1) edge[post] (input);
\draw[thick] (outtobal) edge[post] (bal2);
\draw[thick] (pout) edge[post, bend right=20] (modres2);
\end{tikzpicture}
}
\caption{\textbf{Subnet Lift}}
\label{fig:lift}
\end{figure}

\subsection{Subnet Lift/Add}
We now describe the functioning of the subnet \texttt{Lift/Add} presented in Figure \ref{fig:liftadd}. Note that we initialize the Petri net with a structureless token on place \texttt{U} and the token on the  counter \texttt{Ct} is set to $0$.\\

\noindent \underline{Transition \texttt{Count}:}\\
The transition \texttt{Count} consumes tokens from the place \texttt{ModRes3}, increments the counter \texttt{Ct} by 1, for each token it consumes, and places the tokens on the place \texttt{ModRes4}. The purpose of the counter \texttt{Ct} is to give a lower bound for the number of tokens on the place \texttt{ModRes4}. \\

\noindent \underline{Transition \texttt{if Ct $\ge2$ and nb\_primes $\le$ M$_2$}:}\\
The transition \texttt{if Ct$\ge2$ and nb\_primes $\le$ M$_2$} fires only if there is a token on the place $U$ and the counter \texttt{Ct} has a value $\ge 2$. The transition consumes a token of type \texttt{modular data} with a value $\le M_2$ in the field (iii) of the token from the place \texttt{ModRes4}, and the structureless token from the place \texttt{U}. It then places the token of type \texttt{modular data} on place \texttt{R}, and a structureless token on the place \texttt{D}.\\

\noindent \underline{Transition \texttt{Ct-2}:}\\
The transition \texttt{Ct-2} only fires if there is a token on place \texttt{D}. The transition consumes any token of type \texttt{modular data} from the place \texttt{ModRes4} and the structureless token from the place \texttt{D}. It then places the structureless token on place \texttt{U}, the token of type \texttt{modular data} on place \texttt{L}, and decrements the counter \texttt{Ct} by $2$.

Note that the transitions \texttt{if Ct$\ge$2 and nb\_primes $\le$ M$_2$}, followed by the transition \texttt{Ct-2}, will only fire if there are at least two tokens on \texttt{ModRes4} of which one has  less than or equal to $ M_2$ primes.

Furthermore note that the interaction of the transitions \texttt{if Ct$\ge2$ and nb\_primes $\le$ M$_2$} and \texttt{Ct-2} with the places \texttt{U} and \texttt{D} ensures that the transitions fire sequentially.\\ 

\noindent \underline{Transition \texttt{Lift2}:}\\
The transition \texttt{Lift2} consumes two tokens of type \texttt{modular data}, one from the place \texttt{L} and one from the place \texttt{R}. If the tokens are compatible, that is the sets in the fields (iv) of the tokens are the same, it combines the tokens into a new token in the same way as the transition \texttt{Lift1} in the subnet \texttt{Lift} (see Figure \ref{fig:lift}) combines two tokens into a new token of type \texttt{modular data}. If the two tokens are not compatible, a token with the biggest number in the field (iii) is placed on the place \texttt{ModRes5} and the other token is discarded.\\

\noindent \underline{Transitions \texttt{if nb\_primes$>$M$_2$} and \texttt{if nb\_primes$\le$M$_2$, Ct+1}:}\\
Every token on the place \texttt{ModRes5} is consumed and placed on the place \texttt{ModRes4} by either the transition \texttt{if nb\_primes $>$ M$_2$} or the transition \texttt{if nb\_primes $\le$ M$_2$, Ct+1}, depending on the value in the field (iii) of the token. The transition \texttt{if nb\_primes $\le$ M$_2$, Ct+1}, in addition, increment the counter \texttt{Ct} by $1$. We hence do not count the tokens with a value of more than $M_2$ in the field (iii) of the token. This means that the transitions \texttt{if Ct $\ge$ 2 and nb\_primes $\le$ M$_2$} and \texttt{Ct-2}, and hence \texttt{Lift2}, will eventually stop to fire, if no new tokens are placed on \texttt{ModRes4} by the transition \texttt{Count}. This ensures that the number in the field (iii) of the tokens, that is, the number of primes in the product $N$, does not become so big that it slows down the subnet \texttt{ReconsTest} considerably.\\

\noindent \underline{Transitions \texttt{Lift3, Ct-1}:}\\
The transition \texttt{Lift3, Ct-1} only fires if there is a token on the place \texttt{U}. Once it fires it consumes the token from \texttt{U} and places it back once it is finished with its task. This implies that the transition \texttt{Lift3, Ct-1}, the transition \texttt{if Ct$\ge$2 and nb\_primes $\le$ M$_2$} and the transition \texttt{Ct-2} cannot fire at the same time, which avoids the situation where the transition \texttt{if Ct$\ge$2 and nb\_primes $\le$ M$_2$} and the transition \texttt{Lift3, Ct-1} consumes a token at the same time, with no tokens left on the place \texttt{ModRes4} for transition \texttt{Ct-2} to consume. This could create a situation where a token may be unable to move from place \texttt{R} for a considerable amount of time.\\
The transition \texttt{Lift3, Ct-1} consumes two tokens of type \texttt{modular} \texttt{data}, one from the place \texttt{ModRes4} and another one from the place \texttt{Accumulator}, which, if compatible, it combines into a new token, similar to the transition \texttt{Lift2}, which it then places on the place \texttt{LiftedRes}. If the tokens are not compatible, it places the token that was consumed from \texttt{ModRes4} on the place \texttt{LiftedRes} and discards the token that was consumed from \texttt{Accumulator}. \texttt{Lift3, Ct-1} also decrements the counter \texttt{Ct} by 1 for each token it consumes from \texttt{ModRes4}.\\

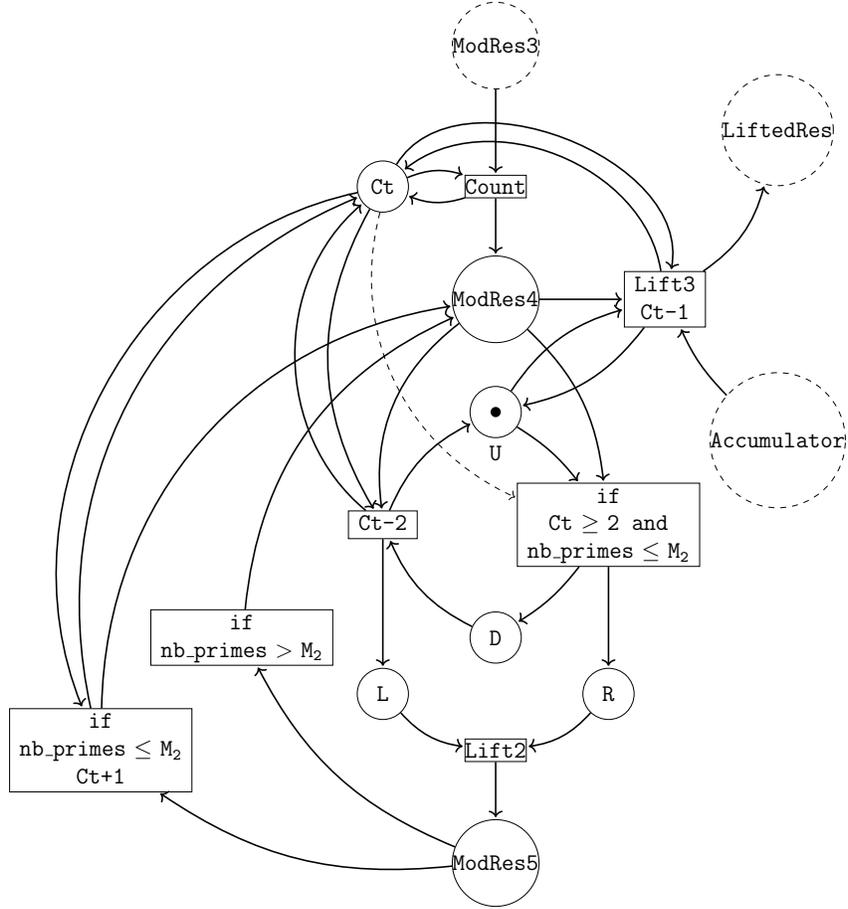
\begin{figure}[ht]
\centering
\scalebox{0.75}{
\begin{tikzpicture}
\node[transition] (count) at (0,-2) {\texttt{Count}};
\node[place,dashed] (modres3) at (0,0.5) {\texttt{ModRes3}};
\node[place] (ct) at (-2,-2) {\texttt{Ct}};
\node[place] (modres4) at (0,-4) {\texttt{ModRes4}};
\node[transition] (tright) at (2,-8) {
	\begin{tabular}{c} 
	\texttt{if}\\
	\texttt{Ct} $\geq $ \texttt{2 and} \\ 
	\texttt{nb\_primes} $\leq$ \texttt{M}$_\texttt{2}$
	\end{tabular}
};
\node[transition] (tleft) at (-2,-8) {
	\begin{tabular}{c}
	\texttt{Ct-2}
	\end{tabular}
};
\node[place,tokens=1,label={270:$\texttt{U}$}] (up)  at (0,-6) {};
\node[place] (down) at (0,-10) {\texttt{D}};
\node[place] (right) at (2,-11) {\texttt{R}};
\node[place] (left) at (-2,-11) {\texttt{L}};
\node[transition] (lift2) at (0,-12) {\texttt{Lift2}};
 \node[place] (modres5) at (0,-14) {\texttt{ModRes5}};
\node[transition] (back) at (-4.5,-10) {
\begin{tabular}{c}
\texttt{if} \\
\texttt{nb\_primes} $>$ \texttt{M}$_\texttt{2}$
\end{tabular}
};

\node[transition] (backincrease) at (-7,-12) {
	\begin{tabular}{c}
	\texttt{if} \\
	\texttt{nb\_primes} $\leq$ \texttt{M}$_\texttt{2}$\\
	\texttt{Ct+1}
	\end{tabular}
};
\node[transition] (lift3) at (3,-4) {
\begin{tabular}{c}
	\texttt{Lift3} \\
	\texttt{Ct-1}
\end{tabular}
};
\node[place, dashed] (liftedres) at (5,-1) {\texttt{LiftedRes}};
\node[place, dashed] (accumulator) at (5,-6.5) {\texttt{Accumulator}};

\draw[thick] (count) edge[post,bend left = 20] (ct);
\draw[thick] (ct) edge[post, bend left = 20]  (count);
\draw[thick] (count) edge[post] (modres4);
\draw[thick] (modres4) edge[post] (lift3);
\draw[thick] (modres4) edge[post, bend right=30] (tleft);
\draw[thick] (modres4) edge[post, bend left=20] (tright);
\draw[thick] (up) edge[post, bend left=10] (tright);
\draw[thick] (tright) edge[post,bend left=10] (down);
\draw[thick] (down) edge[post,bend left=20] (tleft);
\draw[thick] (tleft) edge[post,bend left=20] (up);
\draw[thick] (tright) edge[post] (right);
\draw[thick] (tleft) edge[post] (left);
\draw[thick] (right) edge[post,bend left=20] (lift2);
\draw[thick] (left) edge[post, bend right=20] (lift2);
\draw[thick] (lift2) edge[post] (modres5);
\draw[thick] (modres5) edge[post,bend left=20] (back);
\draw[thick] (modres5) edge[post,bend left=20] (backincrease);
\draw[thick] (back) edge[post,bend left=30] (modres4);
\draw[thick] (backincrease) edge[post, bend left=40] (modres4);
\draw[thick] (backincrease) edge[post, bend left=40] (ct);
\draw[thick] (ct) edge[post, bend right=50] (backincrease);
\draw[thick] (up) edge[post,bend left=20] (lift3);
\draw[thick] (lift3) edge[post,bend left=20] (up);
\draw[thick] (lift3) edge[post, bend right=20] (liftedres);
\draw[thick] (accumulator) edge[post, bend left=10] (lift3);
\draw[thick] (ct) edge[post, bend right=30] (tleft);
\draw[thick] (tleft) edge[post,bend left=50] (ct);
\draw[dashed] (ct) edge[post,bend right=40] (tright);
\draw[thick] (ct) edge[post, bend left=80] (lift3);
\draw[thick] (lift3) edge[post, bend right=60] (ct);
\draw[thick] (modres3) edge[post] (count);
\end{tikzpicture}
}
\caption{\textbf{Subnet Lift/Add}}
\label{fig:liftadd}
\end{figure}

\subsection{Subnet ReconsTest}
Lastly, we describe the functioning of the subnet \texttt{ReconsTest} (see Figure \ref{fig:reconstest}).\\

\noindent \underline{Subnet \texttt{ParallelFarey}:}\\
The subnet \texttt{ParallelFarey}
 consumes tokens of type \texttt{modular} \texttt{data} from the place \texttt{LiftedRes}. It firstly place a structureless token on the place \texttt{Bal2}. The data in the field (i) of a consumed token contains a set of polynomials. The subnet lift each of the coefficients of the polynomials to a preimage under the Farey map. If a coefficient is not in the image of the Farey map, the coefficient is mapped to $0$. The subnet adds a new field (v) to the the consumed token with the lifted set of polynomials. The token is then placed on the place \texttt{FareyRes}. The subnet also  turns the value in the token on the place \texttt{Bal3} to true every time \texttt{ParallelFarey} perfoms parallel tasks. It sets it back to false when parallel processes are done.   \\

 \noindent \underline{Subnet \texttt{ParCompatible}:}\\
 In general, the  subnet \texttt{ParCompatible} consumes tokens from the place \texttt{FareyRes} and the place \texttt{Test}. The subnet determines whether the numerators and denominators of the coefficients of each of the polynomials in the field (v) of the token consumed from the place \texttt{FareyRes} is coprime to the prime $p$ in the field (ii) of the token consumed from the place \texttt{Test}. If this is the case it means that the result in the field (v) of the token consumed from \texttt{FareyRes} can be reduced modulo the prime $p$. The subnet adds the following fields to the consumed token from the place \texttt{FareyRes}: a boolean field (vi) which reflects its compatibility with the prime $p$ as described; a field (vii) containing the data of the field (i) of the token consumed from \texttt{Test}; as well as a field (viii) containing the prime $p$. The token is then placed on the place \texttt{ResTest}. The subnet does the described compatibility test in parallel for each of the polynomials in the field (v) of the token consumed from the place \texttt{FareyRes}. As in the subnet \texttt{ParallelFarey}, the subnet \texttt{ParCompatible} changes the value in the token on the place \texttt{Bal3} depending on the existence of parallel tasks to be executed. If it is the case, it sets it to true. If it is not the case, it sets it back to false.\\

 \noindent \underline{Transition \texttt{if Test = True}:}\\
 The transition \texttt{if Test = True} consumes tokens from the place \texttt{RestTest} of which the boolean field (vi) is true. A consumed token is placed on the place \texttt{Res}.\\

\noindent \underline{Transition \texttt{if Test = False}:}\\
The transition \texttt{if Test = False} consumes tokens from the place \texttt{RestTest} of which the boolean field (vi) is false. The transition removes the fields (vi), (vii) and (viii) of a consumed token and  place it back on the place \texttt{FareyRes}.\\

\noindent \underline{Transition \texttt{Compare}:}\\
The transition  \texttt{Compare} consumes tokens from the place \texttt{Res}. It reduces the polynomials in the field (v) of a consumed token by the prime in the field (viii) and compare it with the data in the field (vii). If the compared data are not equal the boolean in the field (vi) is updated to false. The transition then discard the fields (vii) and (viii) and place the token of type \texttt{boolean modular data} on the place \texttt{RandRes}.\\

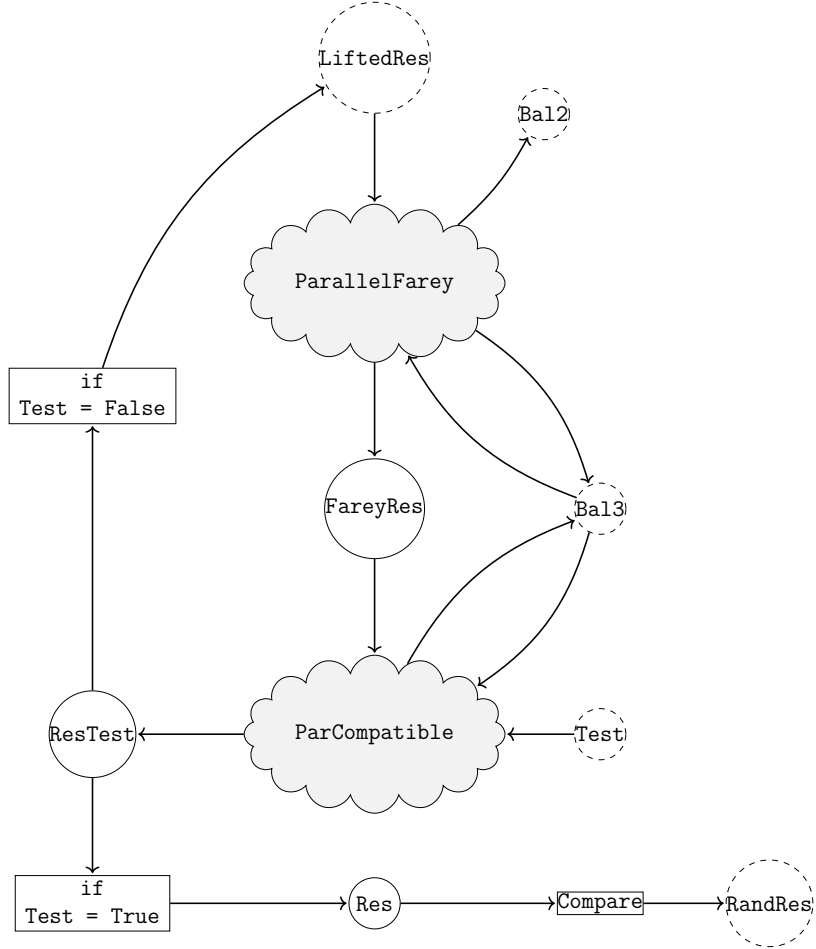
\begin{figure}[ht]
\centering
\scalebox{0.75}{
	\begin{tikzpicture}
		\node[place, dashed] (liftedres) at (0,0) {\texttt{LiftedRes}};
        \node[place,dashed] (bal2) at (3,-1) {\texttt{Bal2}};
		\node[cloud, draw, fill = gray!10, aspect=2, cloud puffs=16] (parallelfarey) at (0,-4) {\texttt{ParallelFarey}};
		\draw[thick] (liftedres) edge[post] (parallelfarey);
		\node[place] (fareyresult) at (0,-8) {\texttt{FareyRes}};
        \node[place,dashed] (bal3) at (4,-8) {\texttt{Bal3}};
        \draw[thick](parallelfarey) edge[post, bend right = 10] (bal2);
        \draw[thick] (bal3) edge[post,bend left=20] (parallelfarey);
         \draw[thick] (parallelfarey) edge[post,bend left=20] (bal3);
		\draw[thick] (parallelfarey) edge[post] (fareyresult);
		\node[cloud, draw, fill = gray!10, aspect=2, cloud puffs=16] (compatible) at (0,-12) {\texttt{ParCompatible}};
		\draw[thick] (bal3) edge[post,bend left=20] (compatible);
        \draw[thick] (compatible) edge[post,bend left=20] (bal3);
        \draw[thick] (fareyresult) edge[post] (compatible);
		\node[place,dashed] (test) at (4, -12) {\texttt{Test}};
		\draw[thick] (test) edge[post] (compatible);
		\node[place] (restest) at (-5,-12) {\texttt{ResTest}};
		\draw[thick] (compatible) edge[post] (restest);
		\node[transition] (False) at (-5,-6) {
		\begin{tabular}{c}
			\texttt{if} \\
			\texttt{Test = False}
		\end{tabular}	
	};
	\draw[thick] (restest)  edge[post, bend left=0] (False);
	\draw[thick] (False) edge[post,bend left=20] (liftedres);
	\node[transition] (True) at (-5,-15) {
		\begin{tabular}{c}
			\texttt{if} \\
			\texttt{Test = True}
		\end{tabular}	
	};
	\draw[thick] (restest) edge[post] (True);
	\node[place] (Res) at (0,-15) {\texttt{Res}};
	\node[transition] (compare) at (4,-15) {\texttt{Compare}};
	\draw[thick] (True) edge[post] (Res);
	\draw[thick] (Res) edge[post] (compare);
	\node[place,dashed] (randres) at (7,-15) {\texttt{RandRes}};
	\draw[thick] (compare) edge[post] (randres);
	\end{tikzpicture}
 }
	\caption{\textbf{Subnet ReconsTest}}
	\label{fig:reconstest}
\end{figure}%

Next, we describe all the subnets that appear in the subnet \texttt{ReconsTest} in detail.

\subsubsection{Subnet ParallelFarey}
The subnet \texttt{ParallelFarey}, see Figure \ref{fig:parallelfarey}, is initialized with a structureless token on the place \texttt{U}.\\

\noindent \underline{Transition \texttt{Split}:}\\
The transition \texttt{Split} consumes tokens from the place \texttt{LiftedRes} of type \texttt{modular data}.
The transition \texttt{Split}, splits the the generators of the result in the field (i) of the consumed token into different tokens with a field containing a single generator. These tokens are then all placed on the place \texttt{Gen} by the GPI-space feature \texttt{Connect-Out-Many}. The generators can be lift in parallel by the transformation \texttt{FareyLift}.
A token with a field containing the number of generators of the field (i) of the consumed token is placed on the place \texttt{NbGen}, as well as, a copy of the consumed token on the place \texttt{LiftInfo}. The transition \texttt{Split} also updates the boolean value in the token on the place \texttt{Bal3} to true. This  disables the transition \texttt{Compute} in the main net Figure \ref{fig:modular} and ensures that the subnet gets enough computational resources. \\

\noindent \underline{Transition \texttt{Farey}:}\\
The transition \texttt{Farey} consumes tokens, containing a field (poly) with a single polynomial, from the place \texttt{Gen}. It maps each of the coefficients of the polynomial to its preimage of the Farey map. If a preimage does not exist, the coefficient is mapped to $0$. The transition also adds a new field (\texttt{Nb}) to each token, giving the number of polynomials in the existing field (poly), which is at this stage $1$. The transition then place the token on the place \texttt{Res}.\\ 

\noindent \underline{Transitions \texttt{if Nb < NbGen}:}\\
The transitions \texttt{if Nb < NbGen} consumes a token from \texttt{Res}, by turn, only if the value in the field \texttt{Nb} is strictly smaller than the value of the token on the place \texttt{NbGen}. It then place the token on the place \texttt{L} or \texttt{R}, respectively.\\

\noindent \underline{Transition \texttt{Merge}:}\\
The transition \texttt{Merge} consumes a token from the place \texttt{L} and a token from the place \texttt{R}. Then, it creates a new token with the union of the sets in the fields (poly) of the two tokens in a field (poly), and a value that is the sum of the values in the fields \texttt{Nb} of the two tokens in a field \texttt{Nb}. The newly created token is placed back on the place \texttt{Res}.\\ 

\noindent \underline{Transition \texttt{if Nb = NbGen}:}\\
The transition \texttt{if Nb = NbGen} consumes the token from \texttt{NbGen} (leaving the place empty as the subnet was initialized), a token from the place \texttt{Res} with the same value in the field \texttt{Nb} as the value of the consumed token from \texttt{NbGen}, and the token from the place \texttt{LiftInfo}. Clearly the transition will not fire if there are no tokens with the same value in the field \texttt{Nb} as the value of the the token on \texttt{NbGen}. Hence the transition will only fire when all the tokens on the place \texttt{Res} is combined into one token. The transition adds the field (poly) of the token consumed from \texttt{Res} to the token consumed from \texttt{LiftInfo}, renaming it to (v). The newly formed token is then placed on the place \texttt{FareyRes}. The transition \texttt{if Nb = NbGen} also sets the value on the place \texttt{Bal3} back to false. \\

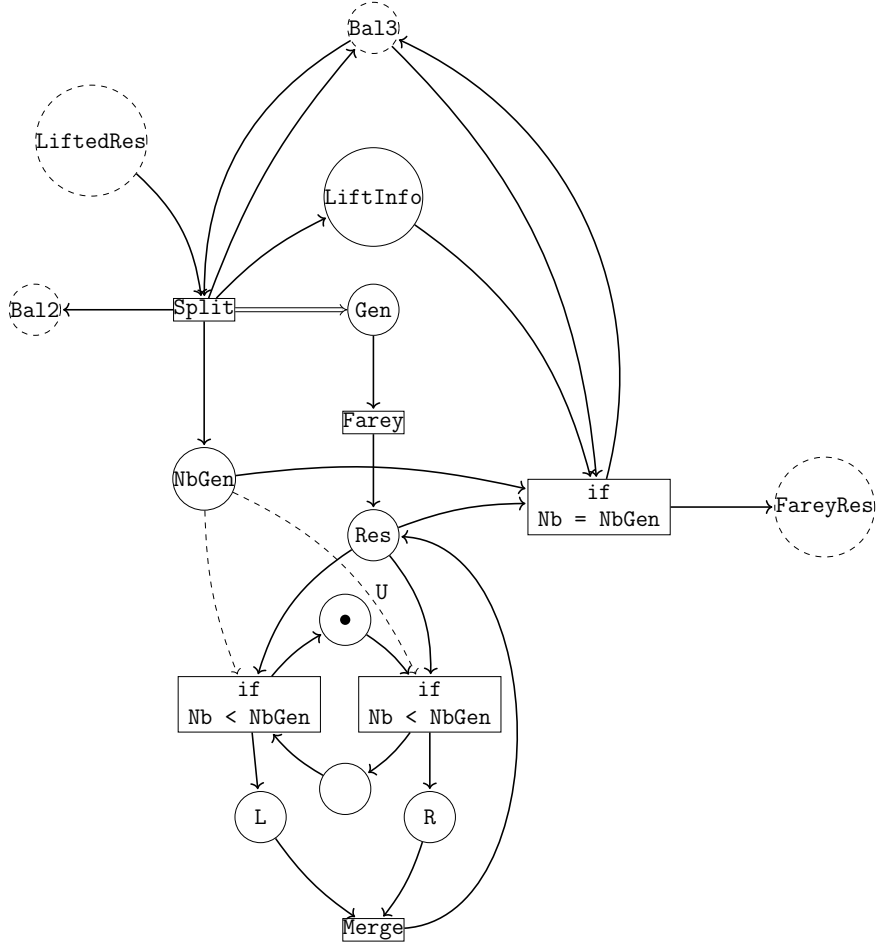
\begin{figure}[ht]
\centering
\scalebox{0.75}{
    	\begin{tikzpicture}
		\node[place, dashed] (liftedres) at (-2,0) {\texttt{LiftedRes}};
        \node[place,dashed](bal2) at (-3,-3) {\texttt{Bal2}};
		\node[transition] (split) at (0,-3) {\texttt{Split}};
        \draw[thick] (split) edge[post] (bal2);
		\draw[thick] (liftedres) edge[post, bend left=20] (split);
		\node[place] (gen) at (3,-3) {\texttt{Gen}};
		\node[place] (nbgen) at (0,-6) {\texttt{NbGen}};
		\node[place] (liftinfo)  at (3,-1) {\texttt{LiftInfo}};
        \node[place,dashed] (bal3) at (3,2) {\texttt{Bal3}};
	    \draw[-{Implies}, double,double distance=1.5pt, line width=0.5pt] (split) to (gen);
		\draw[thick] (split) edge[post] (nbgen);
		\draw[thick] (split) edge[post,bend left=10] (liftinfo);
		\node[transition] (farey) at (3,-5) {\texttt{Farey}};
		\node[place] (res) at (3,-7) {\texttt{Res}};
		\draw[thick] (gen) edge[post] (farey);
		\draw[thick] (farey) edge[post] (res);
        \draw[thick](bal3) edge[post, bend right=30] (split);
        \draw[thick] (split) edge[post,bend left=10] (bal3);
		\node[transition] (left) at (0.8,-10) {
		\begin{tabular}{c}
			\texttt{if} \\
			\texttt{Nb < NbGen}
		\end{tabular}	
	};
	\node[transition] (right) at (4,-10) {
	\begin{tabular}{c}
		\texttt{if} \\
		\texttt{Nb < NbGen}
	\end{tabular}	
};
	 \node[place,tokens=1,label={30:$\texttt{U}$}] (up) at (2.5,-8.5) {};
	 \node[place] (down) at (2.5,-11.5) {};
	 \draw[thick] (res) edge[post,bend right=20] (left);
	 \draw[thick] (res) edge[post,bend left=20] (right);
	 \draw[thick] (up) edge[post,bend left=10] (right);
	 \draw[thick] (right) edge[post,bend left=10] (down);
	 \draw[thick] (down) edge[post,bend left=10] (left);
	 \draw[thick] (left) edge[post,bend left=10] (up);
	 \node[place] (L) at (1,-12) {\texttt{L}};
	 \node[place] (R) at (4,-12) {\texttt{R}};
	 \draw[thick] (left) edge[post] (L);
	 \draw[thick] (right) edge[post] (R);
	 \node[transition] (append) at (3,-14){\texttt{Merge}};
	 \draw[thick] (L) edge[post,bend right=10] (append);
	 \draw[thick] (R) edge[post,bend left=10] (append); 
	 \draw[thick] (append) edge[post, bend right=87] (res);
	 \draw[dashed] (nbgen) edge[post,bend right=10] (left);
	 \draw[dashed] (nbgen) edge[post,bend left=20]  (right);
	 \node[transition] (extract) at (7,-6.5) {
	 	\begin{tabular}{c}
	 		\texttt{if} \\
	 		\texttt{Nb = NbGen}
	 	\end{tabular}	
	 };
  \draw[thick] (bal3) edge[post,bend left = 20] (extract);
  \draw[thick](extract) edge[post,bend right=40] (bal3);
 \draw[thick] (nbgen) edge[post, bend left=10] (extract);
 \draw[thick] (res) edge[post,bend left=10] (extract);
 \node[place,dashed] (fareyres) at (11,-6.5) {\texttt{FareyRes}};
 \draw[thick] (extract) edge[post] (fareyres);
 \draw[thick] (liftinfo) edge[post,bend left=20] (extract);
	\end{tikzpicture}
 }
	\caption{\textbf{Subnet ParallelFarey}}
	\label{fig:parallelfarey}
\end{figure}

\subsubsection{Subnet ParCompatible}
Now, we describe the subnet \texttt{ParCompatible}, see Figure \ref{fig:parcompatible}. The subnet is initialized with a structureless token in the place \texttt{U}.\\

\noindent \underline{Transition \texttt{Split}:}\\
The transition \texttt{Split} consumes tokens from the place \texttt{FareyRes} and the place \texttt{Test}. The transition adds the field (i) and (ii) of the token consumed from \texttt{Test} to the fields of the the token consumed from \texttt{FareyRes} and rename them to (vi) and (vii), respectively. The newly formed token is placed on the place \texttt{LiftInfo}. The transition \texttt{Split}, splits the generators, similar to the subnet \texttt{ParallelFarey} in the field (i) of the consumed token from \texttt{FareyRes} into different tokens with a field containing a single generator. Lastly it place a token containing the number of generators in the field (i) of the consumed token from \texttt{FareyRes} on the place \texttt{NbGen}. The transition Split also updates the boolean
value in the token on the place \texttt{Bal3} to true as seen in the transition \texttt{Split} of the subnet \texttt{ParallelFarey}, see Figure \ref{fig:parallelfarey}.\\

\noindent \underline{Transition \texttt{Compatible}:}\\
The transition \texttt{Compatible} consumes tokens from the place \texttt{Gen} and reads the prime $p$ that is contained in the field (vii) of the token on the place \texttt{LiftInfo}, that is the prime that was in the field (ii) of the token that was consumed by \texttt{Split} from the place \texttt{Test}. The transition then test whether the numerator or denominator of any coefficient of the polynomial in the field (i) of the token consumed from \texttt{Gen} is divisible by $p$. If this is the case the transition creates a token with a boolean field that is \texttt{false}, otherwise it creates a token with a boolean field that is \texttt{true}. A field \texttt{Nb} is added to the newly created token, with the number of generators in the field (i) of the token consumed from \texttt{Gen} (which is in this case $1$). The token is then placed on the place \texttt{Res}.\\

\noindent \underline{Transitions \texttt{if Nb < NbGen}:}\\
Similar to the subnet \texttt{ParallelFarey}, the transition with condition \texttt{ Nb < NbGen} consumes a token from \texttt{Res}, by turn, only if the value in the field \texttt{Nb} is strictly smaller than the value of the token on the place \texttt{NbGen}. It then places the token on the place \texttt{L} or \texttt{R}, respectively.\\

\noindent \underline{Transition \texttt{And}:}\\
The transition combines a token from the place \texttt{L} and a token from the place \texttt{R} into a token with the same fields, by adding the numbers in the fields \texttt{Nb} of the two tokens, and by placing the value $b_L\land b_R$ in the boolean field, where $b_L$ and $b_R$ is the values in the boolean fields of the tokens, consumed from the places \texttt{L} and \texttt{R}, respectively. The token is then placed on the place \texttt{Res}.\\

\noindent \underline{Transition \texttt{if Nb = NbGen}:}\\
The transition \texttt{if Nb = NbGen} only fires when the number contained in the field \texttt{Nb} of a token on \texttt{Res} is equal to the number contained in the token on the place \texttt{NbGen}. If this is the case there will be only one token on the place \texttt{Res}. The transition then consumes the token from the place \texttt{NbGen}, the token from the place \texttt{LiftInfo} and the token from the place \texttt{Res}. The token that was consumed from \texttt{NbGen} is discarded, the boolean field from the token that was consumed from \texttt{Res} is added to the token consumed from \texttt{LiftInfo}, which is then placed on the place \texttt{ResTest}. The transition if \texttt{Nb = NbGen} also sets the value on the place \texttt{Bal3} back to false.

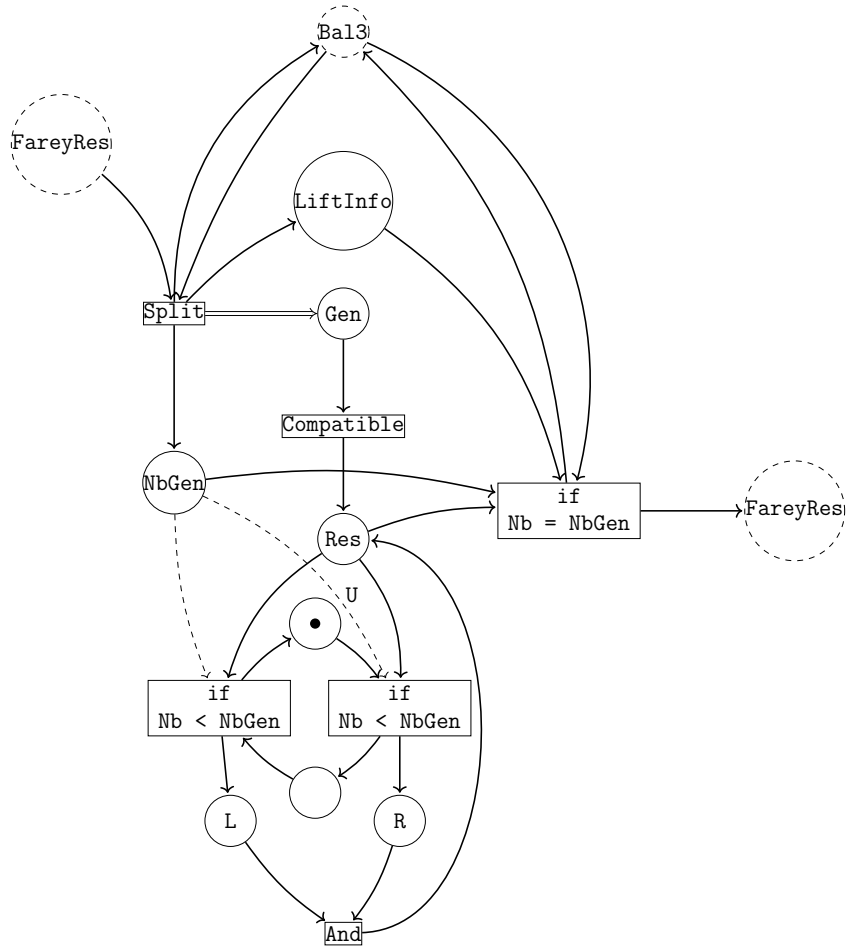
\begin{figure}[ht]
	\centering
 \scalebox{0.75}{
		\begin{tikzpicture}
			\node[place, dashed] (fareyres) at (-2,0) {\texttt{FareyRes}};
			\node[transition] (split) at (0,-3) {\texttt{Split}};
			\draw[thick] (fareyres) edge[post,bend left=20] (split);
			\node[place] (gen) at (3,-3) {\texttt{Gen}};
			\node[place] (nbgen) at (0,-6) {\texttt{NbGen}};
			\node[place] (liftinfo)  at (3,-1) {\texttt{LiftInfo}};
            \node[place,dashed] (bal3) at (3,2) {\texttt{Bal3}};
            \draw[thick](split) edge[post, bend left = 30] (bal3);
            \draw[thick] (bal3) edge[post,bend right= 10] (split);
 			\draw[-{Implies}, double,double distance=1.5pt, line width=0.5pt] (split) to (gen);
			\draw[thick] (split) edge[post] (nbgen);
			\draw[thick] (split) edge[post,bend left=10] (liftinfo);
			\node[transition] (compatible) at (3,-5) {\texttt{Compatible}};
			\node[place] (res) at (3,-7) {\texttt{Res}};
			\draw[thick] (gen) edge[post] (compatible);
			\draw[thick] (compatible) edge[post] (res);
			\node[transition] (left) at (0.8,-10) {
				\begin{tabular}{c}
					\texttt{if} \\
					\texttt{Nb < NbGen}
				\end{tabular}	
			};
			\node[transition] (right) at (4,-10) {
				\begin{tabular}{c}
					\texttt{if} \\
					\texttt{Nb < NbGen}
				\end{tabular}	
			};
			\node[place,tokens=1,label={30:$\texttt{U}$}] (up) at (2.5,-8.5) {};
			\node[place] (down) at (2.5,-11.5) {};
			\draw[thick] (res) edge[post,bend right=20] (left);
			\draw[thick] (res) edge[post,bend left=20] (right);
			\draw[thick] (up) edge[post,bend left=10] (right);
			\draw[thick] (right) edge[post,bend left=10] (down);
			\draw[thick] (down) edge[post,bend left=10] (left);
			\draw[thick] (left) edge[post,bend left=10] (up);
			\node[place] (L) at (1,-12) {\texttt{L}};
			\node[place] (R) at (4,-12) {\texttt{R}};
			\draw[thick] (left) edge[post] (L);
			\draw[thick] (right) edge[post] (R);
			\node[transition] (and) at (3,-14){\texttt{And}};
			\draw[thick] (L) edge[post,bend right=10] (and);
			\draw[thick] (R) edge[post,bend left=10] (and); 
			\draw[thick] (and) edge[post, bend right=87] (res);
			\draw[dashed] (nbgen) edge[post,bend right=10] (left);
			\draw[dashed] (nbgen) edge[post,bend left=20]  (right);
			\node[transition] (extract) at (7,-6.5) {
				\begin{tabular}{c}
					\texttt{if} \\
					\texttt{Nb = NbGen}
				\end{tabular}	
			};
            \draw[thick](extract) edge[post, bend right = 20] (bal3);
            \draw[thick] (bal3) edge[post,bend left= 40] (extract);
			\draw[thick] (nbgen) edge[post, bend left=10] (extract);
			\draw[thick] (res) edge[post,bend left=10] (extract);
			\node[place,dashed] (fareyres) at (11,-6.5) {\texttt{FareyRes}};
			\draw[thick] (extract) edge[post] (fareyres);
			\draw[thick] (liftinfo) edge[post,bend left=20] (extract);
	\end{tikzpicture}
 }
	\caption{\textbf{Subnet ParCompatible}}
	\label{fig:parcompatible}
\end{figure}
\subsection{Correctness and termination of the algorithm}\label{proofalg}
Using  notation from  Section \ref{sub:genericFramework}, we consider a deterministic algorithm $\mathcal A$ which takes as input an element $P\in
R_{0}\left\langle m_{1},\ldots,m_{s}\right\rangle $  and computes an element $$Q(0)=\prod_{i=1}^{t}Q_i\in R_{0}\left\langle
n_{1},\ldots,n_{t}\right\rangle $$ such that each $Q_{i}$, for $i=1,\ldots,t$, is
a reduced Gr\"{o}bner basis with respect to a global monomial ordering $>_{i}$ on
$R_0^{n_{i}}$. The algorithm under consideration also needs to be applicable over $R_{p}$ for $p$ a prime. Moreover, we assume that \emph{for any input $P\in
R_{0}\left\langle m_{1},\ldots,m_{s}\right\rangle $ there are only finitely many bad primes for the algorithm $\mathcal{A}$.}  This assumption is always satisfied if we deal with Buchberger based algorithms, see  \cite{arnold2003modular} and \cite{moller1984upper}. Now, fix an input $P\in
R_{0}\left\langle m_{1},\ldots,m_{s}\right\rangle $.
First, we generate primes that are compatible with  the input $P$ in the transition \texttt{GenPrime}. That is, primes that are coprime with all the numerators and denominators of coefficients in $P$ (in their coprime representation). Then, the transition \texttt{Compute} applies the algorithm $\mathcal{A}$ to the reductions $P_p$ for all generated primes $p$. In the subnet \texttt{Lift}, the modular results are lifted to a $P_N$ for $N \geq \frac{M_1}{2}$ via the Chinese Remainder Theorem. This is done by recursively lifting two modular results with the same lead monomials into one. Thereafter, in the subnet \texttt{Lift/Add}, a modular result $P_N$ for $N \geq \frac{M_1}{2}$ gets lifted, respecting the leading monomials, with the accumulator or  to a reasonable size with another modular result $P_{N'}$ for $N' \geq \frac{M_1}{2}$. Eventually, modular results $P_N$ are combined with the accumulator, and the result is lifted to $Q \in R_{0}\langle n_{1},\ldots,n_{r}\rangle$ via the error tolerant reconstruction in the subnet \texttt{ParallelFarey}. Then, the characteristic zero result $Q$ is tested against a modular result with one prime via the pTest in the subnet \texttt{ReconsTest}. If the  pTest is successful, the result $Q$ is tested with a final test in the  transition \texttt{Verify}.  If $Q$ passes the verification, we return it  and stop the framework. In the case where pTest or the verification fails, we set the accumulator to $P_N$ and the framework continue to run. 

Now, suppose that the pTest fails. So, we set the accumulator to $P_N$. If the lead monomials of $P_N$ disagrees with the lead monomials of a new modular result $P_{N'}$ with $N' \geq \frac{M_1}{2}$, then we throw away the value of the  accumulator and use $P_{N'}$. In this manner, the framework is able to remove accumulated bad primes. This defines a weighted majority vote. As there are only  finitely many bad primes, for a sufficiently large set of primes used, the error tolerant reconstruction will eventually lift a modular result $P_N$ to the expected output $Q(0) \in R_0\langle n_1,\dots,n_t \rangle$, and the framework will terminate.

\section{Timings}\label{sec:6}

In this section, we provide examples from Gröbner bases and birational geometry on which we time the implementation of the coordination layer discussed in Section \ref{sec:5}. 

The implementation is based on the \textsc{Singular}-\textsc{GPI-Space} framework for massively parallel computations in computer algebra \cite{singgspc}. While the task manager \textsc{GPI-Space} (version 23.06) is used as an implementation of a coordination language based on Petri nets, we rely on the computer algebra system \textsc{Singular} (version 4.3.0) as the computation model. Moreover, we have realized a user interface based on \textsc{Singular} interface for our application, so that the computation can be triggered with input data as well as output data given in \textsc{Singular}. 

Unless specified, timings are conducted on the cluster Beehive of the Fraunhofer Institute for Industrial Mathematics ITWM. Each node is outfitted with the following hardware components: two Intel Xeon Gold 6240R CPUs at 2.40 GHz (with a total of 48 cores without hyperthreading), 64 GB RAM, a 480 GB SSD, 10 Gigabit Ethernet and HDR100 Infiniband. The nodes run under  Oracle Linux 8 (which is derived from Red Hat Enterprise Linux), and  are connected via FDR Infiniband. The implementation and examples are available at the GitHub repository \cite{modular}.

All timings are provided in seconds. In all the tables, the term "no. of cores" (respectively "no. of primes") refers  the number of cores  (respectively primes) used during a running process. Additionally, the symbol "---" means no applicable due to the limitation of the implementation on one node, that is lack of coordination layer to run the algorithm on more than one node. 

In the first section, we investigate an application of the generic  framework in the computation of reduced  Gr\"{o}bner bases by comparing its timing with existing algorithms in \textsc{Singular}. Then, in the second section, we give an application in the computation of image of rational maps. The timings are compared with plain \textsc{Singular} implementations using task parallelism if available. Moreover, we present how the implementation scales with the number of nodes.

All examples are chosen to emphasize the performance of the coordination separate computation implementation.

\subsection{Gr\"{o}bner Bases}\label{sec:grobner}
In this section, we compute Gr\"{o}bner bases of ideals over characteristic $0$ via $3$ differents implementations: the massively parrallel  implementation \texttt{gspc\_modstd}, the implementation of modular algorithm of \textsc{Singular} \texttt{modStd} from the library \texttt{modstd.lib} and the direct computation over $\mathbb{Q}$ via the procedure \texttt{std}.  
The interface \texttt{gspc\_modstd} has the following signature 
\begin{verbatim}
    gspc_modstd(ideal I, configToken gc, int bal1, int bal2, int M1, int M2,
                int nb_primes)
\end{verbatim}
 where \texttt{I} is the input ideal; \texttt{gc} contains data required for the startup of \textsc{GPI-Space}; the integers \texttt{bal1}, \texttt{bal2}, and \texttt{nb\_primes} represent in the modular net Figure~\ref{fig:modular} the value of \texttt{m}, the initial number of structureless tokens on the place \texttt{Bal2}, and  the intial number of tokens on the place \texttt{Primes}, respectively; the integer \texttt{M1} (resp. \texttt{M2}) corresponds to the value of \texttt{M}$_1$ in the subnet \texttt{Lift} Figure~\ref{fig:lift} (resp. the value of \texttt{M}$_2$ in the subnet \texttt{Lift/Add} Figure~\ref{fig:liftadd}). The last five arguments are optional. For the timing, we use the default values given by 
 $$
 \label{eq:configuration}
(\texttt{bal1}, \texttt{bal2}, \texttt{M1}, \texttt{M2}, \texttt{nb\_primes}) = \left(n-1, \lceil 0.83 n \rceil, n-1, n-1, 2n-2 \right) $$
 with $n$  the number of cores  assigned to the framework and $\lceil. \rceil$ the ceiling function.
For timing purpose, we consider the following  examples. 
\begin{exa}
   We considered a homogeneous ideal generated by $5$ quatrics in a $5$ variables polynomial ring over $\mathbb{Q}$. The ideal is generated by \textsc{Sinigular} via the procedure \texttt{randomid} in the library \texttt{random.lib} in such a  way that all generators coefficients are integers in the interval $[-20,20]$. We use the lexicographic ordering \texttt{lp} as monomial ordering. The running time are summarized in Table~\ref{tab:randomstd}. Note that the $48$ cores timing for \texttt{gspc\_modstd} is more than the corresponding timing for \texttt{modStd}. This can be explained by the fact that\texttt{gspc\_modstd}  uses more primes during the computation. In this example, the computation of the reduced Gr\"{o}bner bases in characteristic a prime $p$ is  faster  compared to the subsequent two examples, so the bottle neck of the computation is acually the lifting and testing.
\end{exa}
\begin{table}[ht]
	\begin{center}
		\begin{tabular}{c|c c|cc|c}
			\multirow{2}{4em}{no. of cores} & \multicolumn{2}{|c|}{\texttt{gspc\_modstd}} & \multicolumn{2}{|c|}{\texttt{modStd}} & \texttt{std} \\
			 \cline{2-6}
    & time & no. of primes&  time &no. of primes& time \\  \hline
   --- &---&---&---&---& 3201 \\
   48 & 81&503 &64 & 384 & --- \\
   96 & 38 & 503 & ---& ---& --- \\
   \hline
			
		\end{tabular}
	\caption{Running time in seconds of  a lexicographic Gr\"{o}bner basis for the complete intersection of $5$ homogeneous quartics in 5 variables with  coefficient size 20 over $\mathbb{Q}$. }
    \label{tab:randomstd}
	\end{center}
 \end{table}
\begin{exa} 
Consider the cyclic $8$ ideal in an 8 variable polynomial ring over $\mathbb{Q}$. The timings for the computation of its reduced Gr\"{o}bner basis with respect the degree reverse lexicographical ordering \texttt{dp} are given in  Table \ref{tab:cyclic}. Both modular algorithms relatively use the same number of prime during their executions but the implementation \texttt{gspc\_modstd} shows better performance. Indeed, the implementation \texttt{modStd} does not accumulate modular results throughout the characteristic $p$ Gröbner basis computation. That is, it has to finish lifing all the current characteristic $p$  results in order to generate the next batch of characteristic $p$ results, in case the algorithm does not stop. In this example, we note that a characterstic $p$ result takes  a longer time to compute compared to the previous example. 
\end{exa}

\begin{table}[ht]
	\begin{center}
		\begin{tabular}{c|c c|cc|c}
			\multirow{2}{4em}{no. of cores} & \multicolumn{2}{|c|}{\texttt{gspc\_modstd}} & \multicolumn{2}{|c|}{\texttt{modStd}} & \texttt{std} \\
			 \cline{2-6}
    & time & no. of primes&  time &no. of primes& time \\  \hline
      1& --- & ---& ---& --- &  $>3d$\\
    48& 146 & 73& 207& 78 &  ---\\
   96 & 80 & 75  & ---& ---& --- \\
   \hline
			
		\end{tabular}
	\caption{Running time in seconds for a Gr\"{o}bner basis of the cyclic $8$ ideal in $8$ variables over $\mathbb{Q}$ with respect to the degree reverse lexicographical ordering  \texttt{dp}. }
 \label{tab:cyclic}
	\end{center}
 \end{table}
\begin{exa}
    Consider the kastura 11 ideal over $\mathbb{Q}$. The timings for the computation of  its reduced Gr\"{o}bner basis with respect the degree reverse lexicographical ordering \texttt{dp} are provided in  Table~\ref{tab:katsura}.  The implementation \texttt{gspc\_modstd} demonstrates superior performance when compared to the other two implementations \texttt{modStd} and \texttt{std}. In this example, we observe that a characteristic $p$ result requires a longer time for computation.
\end{exa}
 \begin{table}[ht]
	\begin{center}
		\begin{tabular}{c|c c|cc|c}
			\multirow{2}{4em}{no. of cores} & \multicolumn{2}{|c|}{\texttt{gspc\_modstd}} & \multicolumn{2}{|c|}{\texttt{modStd}} & \texttt{std} \\
			 \cline{2-6}
    & time & no. of primes&  time &no. of primes& time \\  \hline
       1& --- &--- & ---& --- &  1589\\
    48& 326 &72 & 405& 78 &  ---\\
   96 &  187& 72  & ---& ---& -- \\
   \hline
			
		\end{tabular}
	\caption{Running time in seconds for a Gr\"obner basis of the Katsura $11$ ideal  over $\mathbb{Q}$  with respect to the degree reverse lexicographical ordering  \texttt{dp}. }
 \label{tab:katsura}
	\end{center}
 \end{table}

\subsection{Computations for Rational Maps}
In this section, we time the computation of the image of rational maps using the new massively parallel modular algorithm implementation \texttt{gspc\_modimage} and compare the results with the modular algorithm \texttt{ModImageMap} in textsc{Singular} and the  direct computation implementation \texttt{ImageMap} from \cite{modularimage}. 
In all the example, we fix the quintic plane curve ideal 
$$
\begin{array}{c}
I = t_1^5+10t_1^4t_2+20t_1^3t_2^2+130t_1^2t_2^3-20t_1t_2^4+20t_2^5-2t_1^4t_0 \\
-40t_1^3t_2t_0-150t_1^2t_2^2t_0-90t_1t_2^3t_0-40t_2^4t_0+t_1^3t_0^2\\
+30t_1^2t_2t_0^2+110t_1t_2^2t_0^2+20t_2^3t_0^2
\end{array}$$
of $\mathbb{Q}[t_0,t_1,t_2]$ and consider various rational maps $$\Phi :  V(I) \subset \mathbb{P}_\mathbb{Q}^2  \dashrightarrow \mathbb{P}_\mathbb{Q}^n \text{ for } n \geq 1.$$ The function \texttt{gspc\_modimage} has the signature
\begin{verbatim}
    gspc_modimage(ideal phi, ideal I, configToken gc, int bal1, int bal2,
                  int M1, int M2, int nb_primes)
\end{verbatim} 
where \texttt{phi} is the ideal generated by the homogeneous polynomials defining the rational map; \texttt{I} is the homogeneous ideal defining the source projective variety; \texttt{gc} is a configuration data required for the startup of \textsc{GPI-Space}. The remaining parameters have the same meaning  as in Section \ref{sec:grobner}.

\begin{exa}
For $5 \leq d \leq 7$. Consider the degree $d$ Veronese embedding  rational maps $\Phi_d: V(I) \subset \mathbb{P}_\mathbb{Q}^2 \dashrightarrow \mathbb{P}_\mathbb{Q}^{n}$, with $n= \binom{2+d}{2}$, defined by all the monomials of degree $d$ in the variables $t_0,t_1$ and  $t_2$.
We do timings for this example on a typical user machine, an  Intel i7-1265U with 10 cores, max. 4.8GHz, 32GB RAM, 2 performance cores, 8 efficient cores and 12 threads.  The timings are summarized in Table \ref{tab:veronese}. Computing the image under the Veronese becomes very difficult with increasing degree, so any improvement is worthwhile. Although the monomial map is not very well suited for modular methods, we nevertheless see a twofold speedup for the degree $7$ Veronese. 

\end{exa}

\begin{table}[ht]
	\begin{center}
		\begin{tabular}{c|c|c c|cc|c}
  &\multirow{2}{0.7em}{} & \multicolumn{2}{|c|}{\texttt{gspc\_modimage}} & \multicolumn{2}{|c|}{\texttt{modimage}} & \texttt{image$/\mathbb{Q}$} \\
			&\multirow{2}{0.7em}{$n$ \,\,\,} & \multicolumn{2}{|c|}{(10 cores)} & \multicolumn{2}{|c|}{\texttt{(10 cores)}} & \texttt{(1 core)} \\
			 \cline{3-7}
 deg &   & time & no. of primes&  time &no. of primes& time \\  \hline
 5 &  20& 42 & 5 & 42& 11 &  41\\ 
 6 & 27&491 &5 &465 & 11&742 \\ 
 7 &35&3560&5&3360&11&6990 \\
   \hline	
		\end{tabular}
	\caption{Running time in seconds for the computation of the images of degree~$d$ Veronese embeddings of a quintic plane curve.}
 \label{tab:veronese}
	\end{center}
 \end{table}
 \begin{exa}
We consider two rational maps $V(I) \subset \mathbb{P}_\mathbb{Q}^2\dashrightarrow \mathbb{P}_\mathbb{Q}^2$  defined by  random homogeneous polynomials with integer coefficients in the interval $[-2000, 2000]$. The first is defined by homogeneous polynomials of degree $2$ and the second by homogeneous polynomials of degree $3$. The timings are given in Table~\ref{tab:randommap}.
 \end{exa}

 We observe that for small examples, as well as examples which only require a small number of primes (or are run on limited resources) and, hence, only can make use of a limited amount of parallelism, the Petri net based scalable implementation is as fast as the one which is based on Singular task parallelism and forking and thus is limited to a single machine. The modular approach is significantly faster than the computation over the rationals. For examples with non-trivial coefficients in the map, we see a speedup of $1000$ or more.

\begin{table}[ht]
	\begin{center}
		\begin{tabular}{c|c|c c|cc|c}
  &\multirow{2}{0.7em}{\,\, \,\,\,\,\,$d$ } & \multicolumn{2}{|c|}{\texttt{gspc\_modimage}} & \multicolumn{2}{|c|}{\texttt{modimage}} & image$/\mathbb{Q}$ \\
	$n$		&\multirow{2}{0.7em}{} & \multicolumn{2}{|c|}{\texttt{(10 cores)}} & \multicolumn{2}{|c|}{\texttt{(10 cores)}} & \texttt{(1 core)} \\
			 \cline{3-7}
 &   & time & no. of primes&  time &no. of primes& time \\  \hline
 3 & 2& 2.9 & 18 & 5.5& 20 &  2445\\ 
 3 & 3&358 &35 &417& 36& $>$6h \\ 
   \hline	
		\end{tabular}
	\caption{Running time in seconds for the computation of the images of  quintic plane curve under rational map defined by $n$ homogeneous polynomials of degree $d$ and coefficient size less or equal to 2000.}
 \label{tab:randommap}
	\end{center}
 \end{table}

 \begin{exa}
    Lastly, we investigate how the new implementation \texttt{gspc\_modimage} scales with the number of cores. We consider a rational map $\Phi: V(I) \subset \mathbb{P}_K^2 \dashrightarrow \mathbb{P}^4$ given by homogeneous polynomials of degree 3  and  integer coefficients in the interval $[-2000, 2000].$ The timings are given in Table~\ref{tab:scale}. We notice that the 144 cores timing terminates with 216 primes accumulated. This is considerably less than the number of primes used by the 96 cores timing or the 192 cores timing. This is explained by the feature of the subnet \texttt{Lift} (see Figure \ref{fig:lift}) to release token containing less that $144$ primes.\\
    \begin{table}[ht]
	\begin{center}
		\begin{tabular}{c|c c}
			\multirow{2}{4em}{Number of cores} & \multicolumn{2}{|c}{\texttt{gspc\_modimage}}  \\
			 \cline{2-3}
    & time & number of primes\\ \hline
    48& 116 &216 \\
   96&46&286\\
   144&25&216\\
    192&17 &288
			
		\end{tabular}
	\caption{Scaling of the run-times of \texttt{gspc\_modimage} for the computation of the image  of a quintic plane curve by  a rational map into $\mathbb{P}^4 $ given by random polynomials of degree $3$ and absolute coefficient size $\leq 2000$.}
 \label{tab:scale}
	\end{center}
 \end{table}

    In Figure \ref{fig:runtime}, we show the run-times from Table~\ref{tab:scale} against the number cores as well as in red the  least-square exponential curve that fits the run-times.  Then, Figure~\ref{fig:speedup} visualizes the speedup factor (relative to the  timing using only one node with $48$ cores, which we normalize to speedup $48$) against the number of cores. Moreover, the  $y$-value is calibrated in such a way that the 48 cores speedup corresponds to 48. We notice a linear speed up. A visualization of the parallel efficiency of the timings from Table \ref{tab:scale} is given in Figure~\ref{fig:parallelefficiency}. We also plot the least-square fitting curve of the parallel efficiency  using a parabola. We recall that the parallel efficiency is defined as the ratio of the speedup to the number of cores.

    We observe that the implementation shows a parallel efficiency of greater than one, which is noteworthy since parallel efficiencies, in particular for fine grain parallelism, are typically below one. Similar effects have been observed in other setting for example in \cite{boehm2018massively}. We attribute the observed effect to the ability of the Petri net to intertwine the different algorithmic components (generate, compute, compatibility check, Chinese remainder lift, Farey lift, comparison) more efficiently, and hence find a faster way to the solution.  
\end{exa}

 \begin{figure}[ht]
     \centering
     \includegraphics[width=\textwidth]{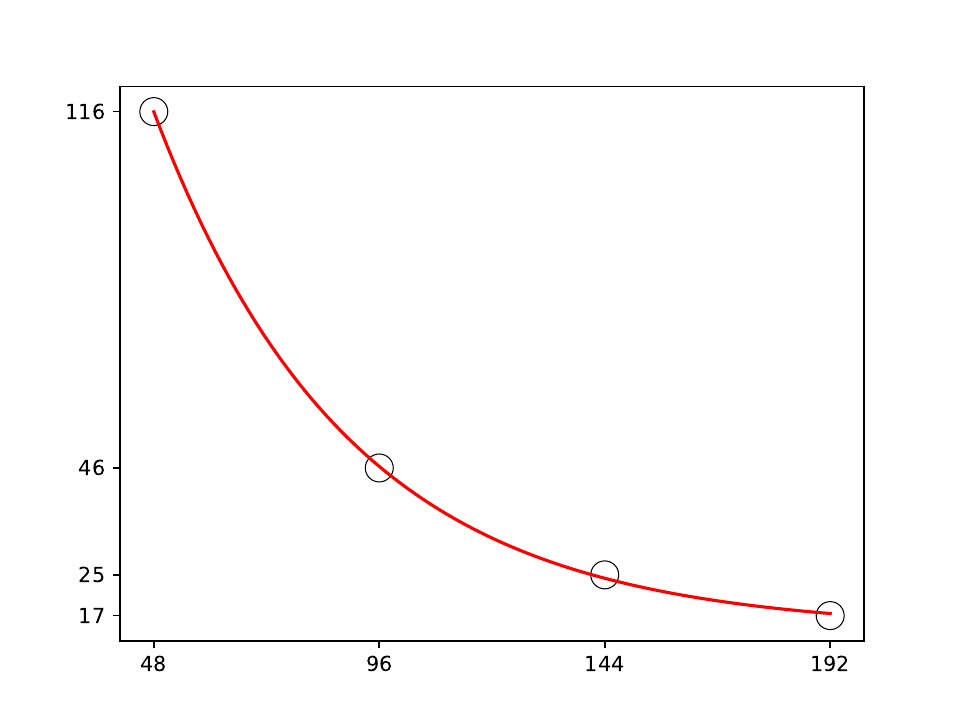}
     \caption{Run-times from Table~\ref{tab:scale} (in seconds) against the number of cores for the computation of the image  of a quintic plane curve by  a rational map into $\mathbb{P}^4 $ given by random polynomials of degree $3$ and absolute coefficient size $\leq 2000$.}
     \label{fig:runtime}
 \end{figure}

\begin{figure}[ht]
     \centering
     \includegraphics[width=\textwidth]{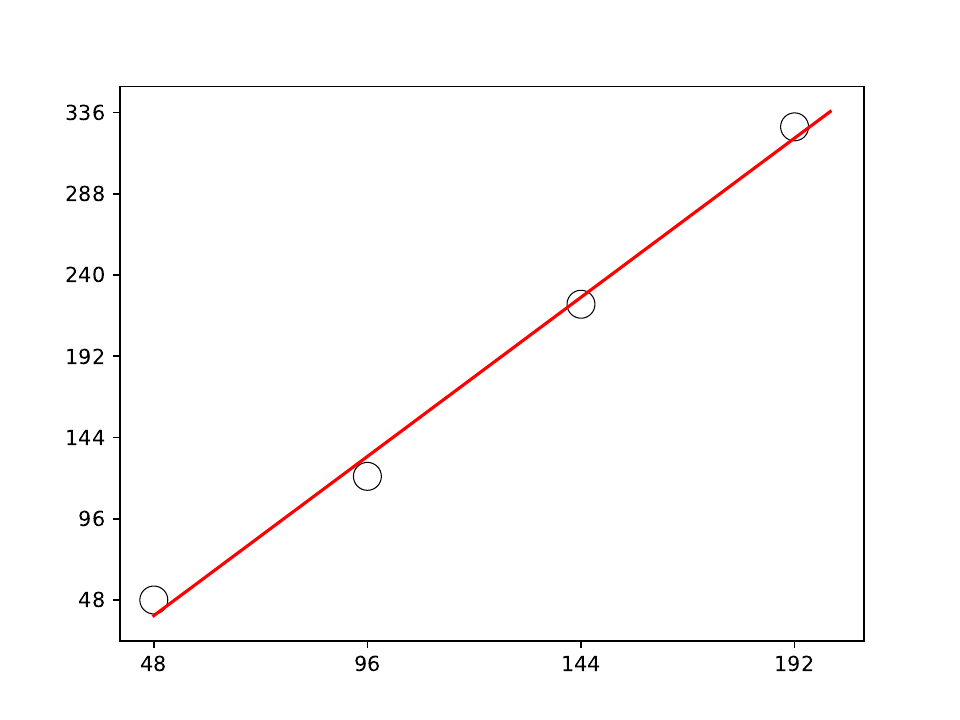}
     \caption{Speedup against the number of cores for the computation of the image  of a quintic plane curve by  a rational map into $\mathbb{P}^4 $ given by random polynomials of degree $3$ and absolute coefficient size $\leq 2000$.}
     \label{fig:speedup}
 \end{figure}

 \begin{figure}[ht]
     \centering
     \includegraphics[width=\textwidth]{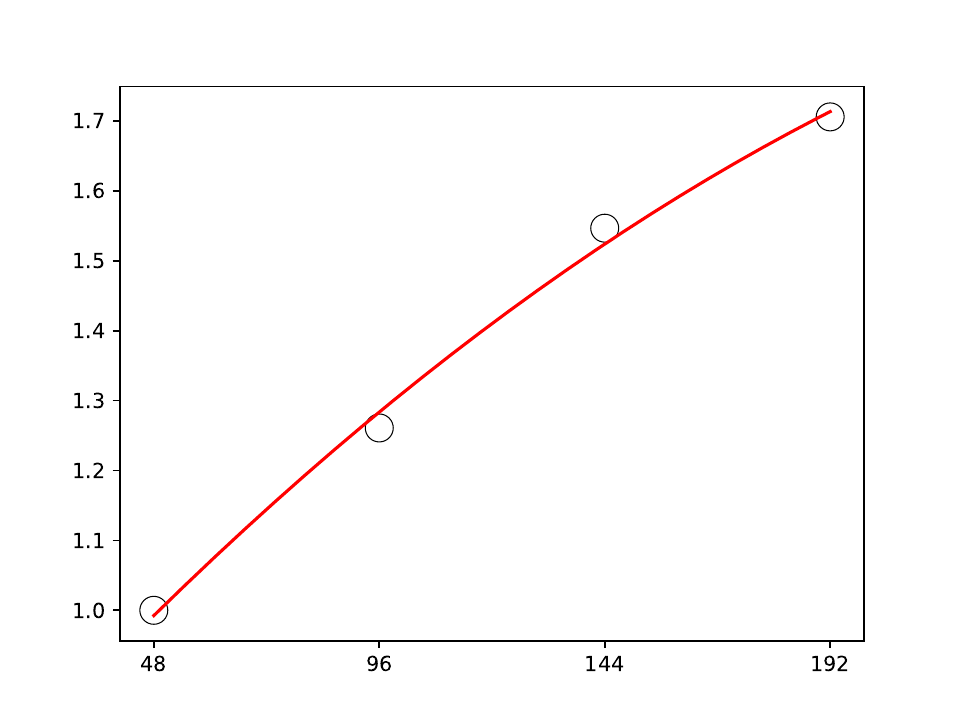}
     \caption{Parallel efficiency obtained from the run-times in Table~\ref{tab:scale} against the number of cores for the computation of the image of a quintic plane curve by  a rational map into $\mathbb{P}^4 $ given by random polynomials of degree $3$ and absolute coefficient size $\leq 2000$.} 
     \label{fig:parallelefficiency}
 \end{figure}

\section{Future Developments}
Current work in progress includes the application of the framework for the computation of syzygies, Schreyer resolutions, ideal and module operations such as intersections and quotients, complete intersection covers of smooth schemes, as well as global sections of divisors.

\bibliography{reference}
\bibliographystyle{apalike}
\end{document}